\newtheorem{theorem}{Theorem}[section]
\newtheorem{proposition}[theorem]{Proposition}
\newtheorem{corollary}[theorem]{Corollary}
\newtheorem{definition}[theorem]{Definition}
\newtheorem{lemma}[theorem]{Lemma}
\newtheorem{remark}{Remark}[section]
\newtheorem{example}{Example}[section]
\newtheorem{notation}{Notation}[section]
\numberwithin{equation}{section}
\newcommand{\Id}{\text{\rm Id}}
\newcommand{\spn}{\text{\rm span}\,}
\newcommand{\GL}{\text{\rm GL}}
\newcommand{\SL}{\text{\rm SL}}
\newcommand{\Orth}{\text{\rm O}}
\newcommand{\End}{\text{\rm End}}
\newcommand{\Cl}{\text{Cl}}
\newcommand{\U}{\text{\rm U}}
\newcommand{\Sp}{\text{\rm Sp}}
\newcommand{\Spin}{\text{\rm Spin}}
\newcommand{\Pin}{\text{\rm Pin}}
\newcommand{\Aut}{\text{\rm Aut}}
\newcommand{\fg}{\mathfrak{g}}
\newcommand{\fn}{\mathfrak{n}}
\newcommand{\la}{\langle}
\newcommand{\ra}{\rangle}
\newcommand{\Q}{\mathbb{Q}}
\begin{document}
\title[Integral structures]{Construction and classification of integral structures on pseudo $H$-type Lie algebras}
\author[K.~Furutani, I.~Markina]{Kenro Furutani and Irina Markina}

\thanks{The work of authors was partially supported by the project Pure Mathematics in Norway, funded by Trond Mohn Foundation and {Troms\o}  Research Foundation,  
by JSPS fund {No. \hspace{-0.15cm}20K03662}, and 
Osaka Metropolitan University, Central Advanced Mathematical Institute
(MEXT Joint Usage/Research Center on Mathematics and Theoretical Physics (JPMXP0619217849)).}
 
\subjclass[2010]{Primary 22E40, 22E25; Secondary 20H05} 
\keywords{Nilpotent Lie group, rational structure, integral basis, uniform discrete subgroup, Clifford algebra, pseudo $H$ type Lie group, admissible module}

\address{K.~Furutani. Osaka Central Advanced Mathematical Institute, Osaka Metropolitan University,
Sugimoto, Sumiyoshi-ku, Osaka 558-8585, Japan}
\email{kf46089@gmail.com}

\address{I.~Markina. Department of Mathematics, University of Bergen, P.O.~Box 7803,
Bergen N-5020, Norway}
\email{irina.markina@uib.no}

\begin{abstract}
Pseudo $H$-type Lie algebras are a special class of 2-step nilpotent metric Lie algebras, intimately related to Clifford algebras $\Cl_{r,s}$. In this work, we propose the classification method for integral orthonormal structures of pseudo $H$-type Lie algebras for a full range of parameters $(r,s)\in\mathbb Z^2_+$, $\mathbb Z_+=\{0,1,2,\ldots\}$. The existence of integral orthonormal structures gives rise to the integral discrete uniform subgroups or lattices of the pseudo $H$-type Lie groups. We apply the developed method for fully classifying the integral orthonormal structures for $0<r+s\leq 16$, and minimal admissible Clifford modules. The cases $0<r+s\leq 16$ form a core for further extensions by using the Atiyah-Bott periodicity and the reducibility of admissible Clifford modules. 
\end{abstract}
\maketitle

\tableofcontents


\section{Introduction}


Two-step nilpotent Lie algebras attracted the attention of G.~M\'etivier~\cite{MR563376} in an attempt to describe hypoelliptic operators in a non-Euclidean setting. The condition of hypo-ellipticity required the adjoint map on the Lie algebra with the value on the centre to be surjective. This type of Lie algebras was studied under different names and for different purposes, for instance, in~\cite{MR1250818,MR1683874,MR2063040,MR2943073,MR3790508}. A.~Kaplan~\cite{MR554324} showed that if the adjoint map is an isometry, then the sub-Laplacian on two-step nilpotent Lie groups, admits a fundamental solution, reminiscent of that in Euclidean space. His result extended a theorem obtained by G.~Folland on the Heisenberg group~\cite{MR315267}. Therefore, the class of these Lie algebras received the name $H$(eisenberg)-type Lie algebras. The $H$-type Lie algebras are in a bijective relation to Clifford algebras $\Cl_{r,0}$, generated by the Euclidean space $\mathbb R^r$~\cite{MR2075359}. The definition of $H$-type Lie algebras related to Clifford algebras $\Cl_{r,s}$, $s>0$, generated by pseudo Euclidean spaces $\mathbb R^{r,s}$ was extended by P.~Ciatti~\cite{Ciatti00} and received the name pseudo $H$-type Lie algebras, see also~\cite{GKM13}. The pseudo $H$-type Lie algebras, which will be denoted by $\mathfrak n_{r,s}$ is a fruitful source for studies of Damek-Ricci spaces~\cite{MR1340192}, Iwasawa decomposition of symmetric spaces~\cite{MR1705176}, Riemannian nilmanifolds~\cite{MR621376}, rigidity problems~\cite{MR1854087}, properties of PDE on Lie groups~\cite{MR3021808,MR1176678,MR4093612} and many other topics in geometry, analysis, and geometric measure theory. The classification of the pseudo $H$-type Lie algebras was completed in~\cite{ FurMar17, FurMar19}.

Our work is motivated by the study of uniform discrete subgroups on nilpotent Lie groups, which are crucial for the study of homogeneous spaces, compact nilmanifolds, and spectral problems. The existence of a uniform subgroup is guaranteed by a presence of a rational structure on the associated Lie algebra by a seminal work of A.~I.~Mal\v{c}ev~\cite{MR0028842}. The existence of rational structures on pseudo $H$-type Lie algebras was proved in~\cite{MR1885037, MR2015754, FurMar14}. A complete classification of rational structures in the class of pseudo $H$-type Lie algebras exists only on the Heisenberg algebra (related to the Clifford algebra $\Cl_{1,0}$)~\cite{MR837583,MR487050}. Some progress in the study of lattices can be found in~\cite{MR2396683}. 

In the present work, we describe the set of invariant orthonormal integral structures that lie at the core of the rational structures of Lie algebras. An invariant integral structure is a $\mathbb Z$‑span of an orthonormal basis constructed via the action of a subgroup $G(\mathfrak B_{r,s})$ of the invertible elements $\Pin(r,s)$ in the Clifford algebra $\Cl_{r,s}$ on a suitably chosen normal vector $v \in V$ in the Clifford module $V$; see Section~\ref{sec:invariant basis}. As a result, the basis of the Clifford module $V$ is invariant under the action of $G(\mathfrak B_{r,s})$, and the non-vanishing structure constants of the pseudo $H$-type Lie algebra are equal to $\pm 1$.
We emphasize that invariant orthonormal integral structures are particular cases of integral structures (having structure constants ${\color{violet}0}, \pm 1$) that are included in a general class of rational structures on a Lie algebra (having rational structure constants). Two invariant integral structures are isomorphic, if and only if the isotropy subgroups $\mathcal S^{(1)}_v\subset \Cl_{r,s}$ and 
$\mathcal S^{(2)}_v\subset \Cl_{r,s}$ of $v\in V$ belongs to the same equivalence class, see Definition~\ref{def:group equiv} and Section~\ref{sec:uniform-subgroups}.
The isomorphism of invariant integral structures of the Lie algebras leads to the
isomorphism of uniform discrete subgroups on the corresponding Lie groups, which is always extended to an automorphism of ambient pseudo $H$-type Lie groups, see~\cite{Raghunathan72}. 

We apply the classification algorithm to the isotropy groups $\mathcal S_v$ for the parameters $0<r+s\leq 16$ in Section~\ref{sec:nonisomorphic subgroups}. We note that the range $0<r+s\leq 16$ corresponds to the first and second periods in $r$ of pseudo $H$-type Lie groups originating from the Atiyah–Bott periodicity of Clifford algebras. The reader may notice that the second period $r\in{9,\ldots,16}$ contains more non-equivalent subgroups, with phenomena such as disconnectedness, which cannot appear in the first period $r\in{3,\ldots,8}$ due to the insufficient dimension of the center of the Lie algebra. The subsequent periods for $r\geq 17$ may contain new phenomena, which are not known to us at the present moment. Therefore, in the present work, we restrict ourselves to the first two periods. We emphasize that our algorithm can be inductively extended to any values of the parameters $(r,s)$, but it may not produce the full range of possible non-equivalent sets of involutions. As the dimension $r+s$ increases, the invariants introduced here, such as the $T1$- and $T2$-types or connectedness, may no longer be sufficient to describe the complete set of invariant integral structures.

A forthcoming paper will be dedicated to studying the new features that arise for increasing parameters $r,s$ and to further analysis of the periodicity.

Despite this, the theorems and the characterizations proved in Sections~\ref{sec:invariant basis} and~\ref{sec:uniform-subgroups} have general character and
are valid for arbitrary parameters $(r,s)$. 



\section{Preliminaries}


In this section we remind some classical objects and introduce the main ones of our interest.


\subsection{Clifford algebras}


We denote by $\mathbb{R}^{r,s}$ the pseudo Euclidean space, that is the vector space $\mathbb{R}^{r+s}$ endowed with the non-degenerate symmetric bilinear form 
$$
\la x,y\ra_{r,s}=\sum_{k=1}^{r}x_ky_k-\sum_{k=r+1}^{r+s}x_ky_k,\quad
x=(x_1,\ldots,x_{r+s}),\ y=(y_1,\ldots,y_{r+s}).
$$ 
Let $\Cl_{r,s}$ be a Clifford algebra over $\mathbb{R}$ generated by 
$\mathbb{R}^{r,s}$. Remind that $\Cl_{r,s}$ 
is a quotient of the tensor algebra 
\[\mathcal{T}(U):
=
\mathbb{R}\oplus \mathbb{R}^{r,s}\oplus\Big(\stackrel{2}{\otimes}\mathbb{R}^{r,s}\Big) \oplus\Big(\stackrel{3}{\otimes}\mathbb{R}^{r,s}\Big) \oplus 
\Big(\stackrel{4}{\otimes}\mathbb{R}^{r,s}\Big)
\oplus\ldots
\] 
by a two sided ideal $I_{r,s}$ generated by elements of the form
\[
 x\otimes x+\la x,x\ra_{r,s}{\mathbf 1},\quad x\in \mathbb{R}^{r+s},
\]
and ${\mathbf 1}$ is the identity element of the Clifford algebra $\Cl_{r,s}$.
Consider a representation of $\Cl_{r,s}$ on a real vector space $V$
\begin{equation*}
J\colon \Cl_{r,s}\to \End(V).
\end{equation*}
We call $V$ the $\Cl_{r,s}$-module, or simply module if we do not want to specify the signature $(r,s)$, and will denote by $J_zv$ the action of $z\in \mathbb R^{r,s}$ on $v\in V$.
Assume also that the module $V$ is equipped with a non-degenerate symmetric
bilinear form $\la.\,,.\ra_{V}$ satisfying the condition 
\begin{equation}\label{eq:admissible module}
\la J_{z}u,v\ra_{V}+\la u,J_{z}v\ra_{V}=0\quad \text{for any}\quad z\in \mathbb{R}^{r,s}\quad \text{and} \quad u,v\in V.
\end{equation}
We call such a module $V=(V,\la.\,,.\ra_{V})$ an {\it admissible module} of the Clifford algebra~$\Cl_{r,s}$.
We write $V_{min}=(V_{min},\la.\,,.\ra_{V})$ or simply $V_{min}$ for 
an admissible $\Cl_{r,s}$-module of the minimal
dimension and call it a {\it minimal admissible module}. The reader can find more about analogous constructions of 2 step nilpotent Lie algebras, not related to representation of Clifford algebras in~\cite{MR2090766}.

We emphasize the difference between an irreducible Clifford module and a minimal admissible module. Not all  irreducible modules can be equipped with a non-degenerate bilinear symmetric form, satisfying~\eqref{eq:admissible module}. For instance, lack of dimension of an irreducible module can make any bilinear symmetric form degenerate. An admissible module $V$ of $\Cl_{r,s}$ has an even dimension $\dim(V)=2n=N$. It is isometric to $\mathbb R^{n,n}$ if $s>0$ and it is isometric to $\mathbb R^{\pm N,0}$ if $s=0$, see~\cite[Theorem 3.1]{Ciatti00} and~\cite[Proposition 1]{FurMar17}. Any admissible $\Cl_{r,s}$-module can be decomposed into an orthogonal direct sum of minimal admissible modules~\cite[Proposition 2.3 (2)]{FurMar19}.


\subsection{Pseudo $H$-type Lie algebras and Lie groups}


\begin{definition}\label{pseudo H type Lie algebras and groups}
Let $(V,\la.\,,.\ra_{V})$ be an admissible module of a Clifford algebra $\Cl_{r,s}$ with the representation map $J$. Define the Lie bracket on $V\times\mathbb R^{r,s}$ by
\begin{equation}\label{eq:Lie bracket}
\la J_{z}u,v\ra_{V}= \la z,[u,\,v]\ra_{r,s},\quad z\in \mathbb{R}^{r,s},\quad u,v\in V. 
\end{equation}
The pseudo $H$-type Lie algebra $\mathfrak{n}_{r,s}(V)=(V\oplus\mathbb{R}^{r,s},[.\,,.])$
is a Lie algebra whose non-vanishing Lie bracket is defined  in~\eqref{eq:Lie bracket}.
\end{definition} 
Note that the Lie algebra $\mathfrak{n}_{r,s}(V)$ is 2-step nilpotent where $\mathbb R^{r,s}$ is the centre. Property~\eqref{eq:admissible module} and the representation property $J_{z}^2v=-\la z,z\ra_{r,s}v$ for $v\in V$ imply
\begin{equation}\label{eq:isometry}
\la J_zu,J_zv\ra_{r,s}=\la z,z\ra_{r,s}\la u,v\ra_{V},\quad \la J_zu,J_wu\ra_{r,s}=\la z,w\ra_{r,s}\la u,u\ra_{V}.
\end{equation}

The connected simply connected Lie group $\mathbb{N}_{r,s}(V)$ of the Lie algebra $\mathfrak{n}_{r,s}(V)$ is called the pseudo $H$-type Lie group. The exponential map $\exp\colon \mathfrak{n}_{r,s}(V)\to \mathbb{N}_{r,s}(V)$ is a global analytic diffeomorphism~\cite[Theorem 1.2.1]{corwin1990representations}. It allows to induce the coordinates on the Lie group from the Lie algebra by means of Baker-Campbell-Hausdorff formula. Thus for $g\in \mathbb{N}_{r,s}(V)=V\times \mathbb{R}^{r,s}$ we set $g= u\oplus z\in V\oplus\mathbb{R}^{r,s}$. The group product $\ast$ on $\mathbb{N}_{r,s}(V)$ is given by
\begin{align*}
\ast\colon \mathbb{N}_{r,s}(V)&\times \mathbb{N}_{r,s}(V) \to \mathbb{N}_{r,s}(V),
\\
(u_1,z_1)&\ast (u_2,z_2)=\Big(u_1+u_2, \ z_1+z_2+\frac{1}{2}[u_1,u_2]\Big).
 \end{align*}


\subsection{Automorphisms of pseudo $H$-type Lie algebras}


Since automorphisms of a Lie algebra define the automorphisms of 
its connected simply connected Lie group, we consider only the automorphisms
of Lie algebras. The complete description of the group of automorphisms 
of pseudo $H$-type Lie algebras can be found in~\cite{Riehm82,MR1490017,FurMar21}, see also~\cite{MR3218266}.

The automorphisms of pseudo $H$-type Lie algebras are generated by the following ones:

\quad [1] 
The dilations $\delta_{\lambda}(u,z)=(\lambda u, \lambda^2 z)$.

\quad [2] Let $A\colon V\to V$ be a nonsingular linear map and $C\in \Orth(r,s)$ an orthogonal transformation of $\mathbb R^{r,s}$.
Then the map $A\oplus C$ is a pseudo $H$-type Lie algebra automorphism, if and only if
\begin{equation}\label{automorphism condition:eq}
A^{\tau}\circ J_{z}\circ A=J_{C^{\tau}(z)},\quad z\in\mathbb{R}^{r,s},
\end{equation}
where $A^{\tau}$, $C^{\tau}$ are transpose maps defined as
\[
\la A^{\tau}u,v\ra_{V}=\la u,Av\ra_{V},\quad \la C^{\tau}z,w\ra_{r,s}=\la z,Cw\ra_{r,s}.
\]

\quad [3] Let $B\colon V\to \mathbb{R}^{r,s}$ be a linear map. Then
$(v,z)\mapsto \big(v,z+Bv\big)$ is an automorphism.


\subsection{Rational structures, uniform discrete subgroups, lattices}


We refer to works~\cite{Raghunathan72,corwin1990representations} 
for the details discussed in this section.

\begin{definition}\label{def:lattice} A Lie algebra $\mathfrak g_{\mathbb Q}$ over rational numbers $\mathbb Q$ is called the rational structure of a real Lie algebra $\mathfrak g$ if 
$\mathfrak g$ is isomorphic to $\mathfrak g_{\mathbb Q}\otimes\mathbb R$.
\end{definition}

A real Lie algebra $\mathfrak g$ has a rational structure if and only if there is a basis for $\mathfrak g$ such that the structure constants of the Lie algebra are rational numbers.

\begin{definition}
Let $G$ be a Lie group. A subgroup $\Gamma$ is called uniform subgroup if $\Gamma$ is discrete and $G/\Gamma$ is a compact space.
\end{definition}

\begin{definition}
Let $G$ be a Lie group with the Haar measure $\mu$. A subgroup $\Lambda$ is called lattice if $\mu(G/\Lambda)<\infty$.
\end{definition}

Let $G$ be a nilpotent Lie group and $\mu$ the Haar measure on it. Then a discrete subgroup $\Gamma$ is lattice if and only if it is a uniform subgroup, i.e $\mu(G/\Gamma)<\infty$ implies that $G/\Gamma$ is compact. From now on we will not distinguish the lattices and uniform subgroups. A result from~\cite{MR0028842} can be formulated as follows.
\begin{itemize} 
\item If $\Gamma$ is a uniform subgroup of $G$, then $\fg$ has a rational structure $\fg_{\Q}$ such that $\fg_{\Q}=\spn_{\Q} \{\log$\,$(\Gamma)\}$.
\item
If $\fg$ has a rational structure $\fg_{\Q}$, then $G$ has a uniform subgroup $\Gamma$ such that $\log(\Gamma)\subseteq\fg_{\Q}$.
\end{itemize}

\begin{theorem}\cite{Raghunathan72}
Let $\Gamma_{i}\subset G_{i}$, $i=1,2$,
be uniform subgroups of simply connected nilpotent Lie groups $G_{i}$.
An isomorphism $\varphi\colon \Gamma_{1}\to \Gamma_{2}$ of discrete subgroups,
can be extended to a smooth isomorphism  
$\tilde{\varphi}\colon G_{1}\to G_{2}$ of the Lie groups.
\end{theorem}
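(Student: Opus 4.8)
The plan is to pass from the groups to their Lie algebras via the logarithm, transport the abstract isomorphism $\varphi$ to a $\Q$-linear Lie algebra isomorphism of the associated rational structures, and then extend scalars and re-exponentiate. Since each $G_i$ is simply connected and nilpotent, the exponential map $\exp_i\colon\fg_i\to G_i$ is a global analytic diffeomorphism with inverse $\log_i$. By the result of Mal'cev recalled above, each uniform subgroup $\Gamma_i$ yields a rational structure $(\fg_i)_{\Q}=\spn_{\Q}\{\log_i(\Gamma_i)\}$ with $\fg_i\cong(\fg_i)_{\Q}\otimes\R$. Because $G_i$ is nilpotent, the Baker--Campbell--Hausdorff series is a finite sum with rational coefficients; hence, through $\log_i$, the group product, inversion and commutator on $G_i$ become polynomial operations on $\fg_i$ defined over $\Q$, under which $(\fg_i)_{\Q}$ is stable.

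Next I would extend $\varphi$ to the rational Mal'cev completions. In a simply connected nilpotent group $n$-th roots are unique when they exist, so the isolator $\Gamma_i^{\Q}=\{g\in G_i : g^n\in\Gamma_i \ \text{for some}\ n\geq 1\}$ is a well-defined subgroup, the rational Mal'cev hull of $\Gamma_i$, and $\log_i(\Gamma_i^{\Q})=(\fg_i)_{\Q}$. Since $\varphi$ is a group isomorphism it must carry unique roots to unique roots, so for $g\in\Gamma_1^{\Q}$ with $g^n\in\Gamma_1$ one may set $\varphi_{\Q}(g)$ equal to the unique $n$-th root of $\varphi(g^n)$; this is independent of $n$ and defines a unique extension $\varphi_{\Q}\colon\Gamma_1^{\Q}\to\Gamma_2^{\Q}$. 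Transporting through the logarithms yields a bijection $\phi=\log_2\circ\varphi_{\Q}\circ\exp_1\colon(\fg_1)_{\Q}\to(\fg_2)_{\Q}$, which is a homomorphism for the Baker--Campbell--Hausdorff product.

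The main step, and the \emph{principal obstacle}, is to upgrade $\phi$ from a Baker--Campbell--Hausdorff-group homomorphism to a genuine $\Q$-linear Lie algebra isomorphism. I would argue by induction on the nilpotency class using the lower central series filtration $\fg_i=\fg_i^{(1)}\supset\fg_i^{(2)}\supset\cdots$. On the abelianization $\fg_i/\fg_i^{(2)}$ the Baker--Campbell--Hausdorff product collapses to vector addition, so $\phi$ descends to an additive map which, by compatibility with roots, satisfies $\phi(qX)=q\phi(X)$ for $q\in\Q$ and is therefore $\Q$-linear. The inductive hypothesis handles the successive quotients, and the homomorphism identity $\phi(X\ast Y)=\phi(X)\ast\phi(Y)$, expanded through the Baker--Campbell--Hausdorff formula, then forces $\phi$ to respect both scalar multiplication and the bracket on all of $(\fg_1)_{\Q}$. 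The delicate point is the bookkeeping of the higher-order correction terms, so that the linearity and bracket-compatibility established on the quotients lift consistently to the whole algebra.

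Finally, tensoring $\phi$ with $\R$ produces a real Lie algebra isomorphism $\tilde\phi=\phi\otimes_{\Q}\R\colon\fg_1\to\fg_2$, and setting $\tilde\varphi=\exp_2\circ\tilde\phi\circ\log_1$ gives the desired map. It is smooth because $\exp_i$ and $\log_i$ are diffeomorphisms and $\tilde\phi$ is linear; it is a group homomorphism because $\tilde\phi$ intertwines the two Baker--Campbell--Hausdorff products; it is bijective since $\tilde\phi$ is; and it restricts to $\varphi$ on $\Gamma_1$ by construction. This exhibits $\tilde\varphi\colon G_1\to G_2$ as a smooth isomorphism extending $\varphi$.
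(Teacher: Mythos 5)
The paper does not prove this statement at all: it is quoted verbatim from Raghunathan's book as an imported result, so there is no in-paper argument to compare yours against. Your outline is the standard Mal\v{c}ev-rigidity route (rational Mal\v{c}ev hull, transport through $\log$, extension of scalars, re-exponentiation), which is essentially the argument in the cited source, and the overall architecture is sound.

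However, as a proof it has two genuine gaps, and they sit exactly where the mathematical content of the theorem lives. First, you define $\varphi_{\Q}$ on the isolator $\Gamma_1^{\Q}$ by taking unique roots and check well-definedness, but you never establish that $\varphi_{\Q}$ is a group homomorphism; this is then silently used when you assert that $\phi=\log_2\circ\varphi_{\Q}\circ\exp_1$ is a homomorphism for the Baker--Campbell--Hausdorff product. The difficulty is that $(gh)^n\neq g^n h^n$ in a nonabelian nilpotent group, so $\varphi_{\Q}(gh)=\varphi_{\Q}(g)\varphi_{\Q}(h)$ does not follow from the root construction alone; one needs the Hall--Petrescu commutator identities or an induction on the nilpotency class through central extensions (which is how Raghunathan organizes the Mal\v{c}ev completion and its functoriality). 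Second, the step you yourself flag as the ``principal obstacle'' --- upgrading $\phi$ from a BCH-homomorphism to a $\Q$-linear, bracket-preserving map via induction on the lower central series --- is described but not carried out; the phrase about ``bookkeeping of the higher-order correction terms'' is precisely the part that must be done, since on the quotients $\fg^{(k)}/\fg^{(k+1)}$ one has to show that the BCH corrections coming from lower layers are already controlled by the inductive hypothesis before linearity on the next layer can be extracted. With those two inductions actually executed, the remainder of your argument (tensoring with $\R$ and conjugating by the exponentials) is correct and routine.
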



\section{Invariant bases and groups of positive involutions}\label{sec:invariant basis}



\subsection{Definition of an invariant integral structure and uniform subgroups}\label{sec:invariant basis-definition}
%

From now on we will consider only {\bf minimal admissible modules} of Clifford algebras $\Cl_{r,s}$, denoting them either by $V^{r,s}$ or simply by $V$. Let $\mathfrak n_{r,s}(V)=(V\oplus\mathbb{R}^{r,s},[.\,,.])$ be a pseudo $H$-type Lie algebra with $\mathfrak B_{r,s}$ a basis for $\mathbb{R}^{r,s}$ and $\mathfrak B(V)$ a basis for $V$. We write the structure constants $c_{ij}^l$ for $\mathfrak n_{r,s}(V)$ with respect to bases $\mathfrak B(V)$ and $\mathfrak B_{r,s}$ by 
\begin{equation}\label{eq:structure constants}
[v_i,v_j]=\sum_{l=1}^{r+s}c_{ij}^lz_l.
\end{equation}
\begin{definition}\label{def:standard basis}
A basis $\{\mathfrak B(V),\mathfrak B_{r,s}\}$ for $\mathfrak n_{r,s}(V)$ is called integral if the structure constants $c_{ij}^l$ in~\eqref{eq:structure constants} take the values in $\{-1,0,1\}$. 
\end{definition}

We want to study a special class of integral bases of $\mathfrak n_{r,s}(V)$. To describe it, 
we fix an orthonormal basis $\mathfrak B_{r,s}=\{z_1,\ldots, z_r,z_{r+1},\ldots,z_{r+s}\}$ of $\mathbb R^{r,s}$, where
\begin{equation}\label{eq:brs}
\begin{cases}
&z_1,\ldots,z_r\quad \text{are positive, i.e.,}~\la z_{i},\,z_{i}\ra_{r,s}=1,\ \ i=1,\ldots,r,
\\	
&z_{r+1},\ldots,z_{r+s}~\text{are negative, i.e.,}~\la z_{i},\,z_{i}\ra_{r,s}=-1,\ \ j=r+1,\ldots,r+s.
\end{cases}
\end{equation}
The group $\Pin(r,s)$ consists of elements of the Clifford algebra $\Cl_{r,s}$ of the form 
\begin{equation}\label{eqPin}
\sigma=x_{i_{1}}\cdots x_{i_{k}}, \quad \la x_{i_{j}},x_{i_{j}}\ra_{r,s}=\pm 1,
\end{equation}
where $x_{i_{k}}\neq x_{i_{l}}$ for $i_{k}\neq i_l$.
The subgroup $\Spin(r,s)\subset\Pin(r,s)$ is generated by the even number of elements in~\eqref{eqPin}. 
Consider a finite subgroup $G(\mathfrak B_{r,s})$ of the Pin group $\Pin(r,s)$ defined by
\begin{equation*}\label{group generated by basis}
\begin{array}{lll}
G(\mathfrak B_{r,s})=\big\{&\pm {\mathbf 1},\ \pm z_1,\ \ldots,\ \pm z_{r+s},\  \ldots,\ \pm z_{i_1}\cdots z_{i_k}\mid\ 
\\
&1\leq i_1<\cdots<i_k\leq r+s, \quad k=2,\ldots, r+s\big\}.
\end{array}
 \end{equation*}
Thus the generators of the group $G(\mathfrak B_{r,s})$ are $\{-{\mathbf 1},\mathfrak B_{r,s}\}$.
Elements $\sigma \in G(\mathfrak B_{r,s})$ satisfy the properties: either $\sigma^2={\mathbf 1}$ or $\sigma^2=-{\mathbf 1}$.   

We proceed to the construction of bases $\mathfrak B(V^{r,s})$ for the minimal admissible module $V^{r,s}$. In Table~\ref{t:dim} the reader finds dimensions of $V^{r,s}$ for $0\leq r,s\leq 8$, which are extended by periodicity. Table~\ref{t:dim} summarises results from~\cite[Theorem 3.1]{Ciatti00} and~\cite[Proposition 1]{FurMar17}. We marked by red color the Clifford algebras, where the minimal admissible modules differ from the irreducible modules. With the subscript \hspace{-0.08cm}$\ _{\times 2}$ we indicated the presence of two non-equivalent minimal admissible modules of the same dimension. 
Here the equivalence we understand as equivalence of representations of the Clifford algebra on the corresponding modules.

\begin{table}[h]
\center\caption{Dimensions of minimal admissible modules}
\scalebox{0.7}[0.7]{
\begin{tabular}{|c||c|c|c|c|c|c|c|c|c|}
\hline
${\text{\small 8}} $&$ {\text{\small{16}}}$&${\text{\small 32}}$&${\text{\small{64}}}
$&${\text{\small{64}$_{\times 2}$}}$&${\text{\small{128}}}$&${\text{\small{128}}}$&${\text{{\small{128}}}}$&$
{\text{{\small{128}$_{\times 2}$}}} $&${\text{\small{256}}}$
\\
\hline
${\text{\small 7}}$ &$ {\text{\small{16}}}$&${\text{\small{32}}}$&$
{\text{\small{\color{red}{64}}}} $&${\text{\small{64}}}
$&${\text{{\color{red}\small{128}}}}$&${\text{\small{{\color{red}{128}}}}}  $&${\text{\color{red}{\small{128}}}}$&$ {\text{\small{128}}} $&${\text{\small{256}}}$
\\
\hline
${\text{\small 6}}$ &${\text{\small{16}}}$&${\text{\small{16}$_{\times 2}$}}$&${\text{\small{32}}}$&${\text{\small{32}}}$&${\text{\small{\color{red}{64}}}}
$&${\text{\color{red}{\small{64}$_{\times 2}$}}} $&${\text{\color{red}\small{128}}} $&${\text{\small{128}}}$&$ {\text{\small{256}}} $
\\
\hline
${\text{\small 5}} $&${\color{red}\text{\small 16}}$&${\text{\small 16}}$&${\text{\small 16}}$&${\text{\small 16}}$&${\color{red}{\text{\small 32}}}$&${\color{red}{\text{\small 64}}} $&${\text{\small{\color{red}128}}}$&${\text{\small{128}}} $&$\text{\small{\color{red}256}}$
\\
\hline
${\text{\small 4}} $&$  {\text{\small 8}}$&$ {\text{\small 8}}$&$
{\text{\small 8}}$&$ 8_{\times 2}$&$16$&${\text{\small 32}}$&${\text{\small 64}}
$&${\text{\small 64}_{\times 2}} $&${\text{\small{128}}}$
\\
\hline
${\text{\small 3}}$&${\color{red}{\text{\small 8}}}$&${\color{red}{\text{\small 8}}}$&${\text{\small\color{red}8}}$&$8$&$16$&$32$
&${\text{\small\color{red}64}}$&$64$&${\color{red}{\text{\small 128}}}$
\\
\hline
${\text{\small 2}}$&${\color{red}{\text{\small 4}}}$&$
{\color{red}4_{\times 2}}$&${\color{red}8}$&$ 8$&$16$&$16_{\times 2}$&$32$&$32 $&${\color{red}{\text{\small 64}}}$
\\
\hline
${\text{\small 1}}$ &${\color{red}2}$&${\color{red}4}$&${\color{red}8}$& $8$&${\color{red}16}$&$16$&$16$&$16$&${\color{red}{\text{\small 32}}}$
\\
\hline
${\text{\small 0}} $&$  1$&$ 2$&$ 4$&$ 4_{\times 2}$&$ 8$&$ 8$&$ 8$&$ 8_{\times 2}$&$16$
\\
\hline\hline
{s/r}&  {\text{\small 0}}& {\text{\small 1}}& 
{\text{\small 2}}&{\text{\small 3}} & {\text{\small 4}}& {\text{\small 5}}& {\text{\small 6}}& {\text{\small 7}}& {\text{\small 8}}
\\
\hline
\end{tabular}\label{t:dim}
}
\end{table}

\begin{definition}\label{invariant basis}
Fix an orthonormal basis $\mathfrak B_{r,s}$ of $\mathbb R^{r,s}$. An orthonormal basis $\mathfrak{B}(V^{r,s})$ of a minimal admissible module 
$V^{r,s}$ is called invariant basis if it is invariant under the action of 
$G(\mathfrak B_{r,s})$; that is for any $v_i\in\mathfrak{B}(V^{r,s})$ and $z_j\in \mathfrak B_{r,s}$, there exists $v_k\in \mathfrak{B}(V^{r,s})$
such that
$J_{z_j}v_i=v_{k}$ or $J_{z_j}v_i=-v_k$.
\end{definition}
According to Definition~\ref{invariant basis} the maps $J_{z_j}$, $z_j\in \mathfrak B_{r,s}$ act on an invariant basis $\mathfrak{B}(V^{r,s})$ by permutations up to the sign $\pm$.

\begin{remark}
We emphasize that Definition~\ref{invariant basis} requires bases $\mathfrak{B}(V^{r,s})$ to be both orthonormal and invariant. 

{\sc Example A.} Consider the Heisenberg Lie algebra $\fn_{1,0}(V)$ with the normalized basis $\mathfrak B_{1,0}=\{z\}$ for the center and $V^{1,0}$ isometric to $\mathbb R^{2,0}$. Set $v_1\in \mathbb R^{2,0}$, $v_1\neq 0$, $v_2=J_{z}v_1$, and  
$$
u_1=Av_1,\quad u_2=Av_2,
$$
where $A$ is an orthogonal transformation of $V^{1,0}$. Then the basis $\mathfrak B(V^{1,0})=\{v_1,v_2\}$ is orthonormal and invariant, meanwhile the basis $\mathfrak B(V^{1,0})=\{u_1,u_2\}$ is orthonormal, but not necessary invariant. It is invariant under the action of $G(\mathfrak B_{1,0})$ if and only if $J_z$ commutes with $A$. 

{\sc Example B.} Consider $\fn_{0,3}(V)$ with an orthonormal basis $\mathfrak B_{0,3}=\{z_1,z_2,z_3\}$ for the center and a ${\rm Cl}_{\,0,3}$-minimal admissible module $V^{0,3}$ isometric to $\mathbb{R}^{4,4}$. We take $v\in \mathbb{R}^{4,4}$, such that $\la v,v\ra_{\mathbb{R}^{4,4}}=1$. 
The eight vectors
\begin{equation}\label{basis of C03}
v,~J_{z_1}v,~J_{z_2}v,~J_{z_3}v,~J_{z_1}J_{z_2}v,
~J_{z_1}J_{z_3}v,~J_{z_2}J_{z_3}v,~J_{z_1}J_{z_2}J_{z_3}v
\end{equation}
are linearly independent, have square of the norm equal to $\pm 1$, and invariant under the action of $G(\mathfrak B_{0,3})$. Nevertheless, the basis~\eqref{basis of C03} is not necessary orthogonal, since the value 
$\la v,J_{z_1} J_{z_2} J_{z_3}v\ra_{\mathbb{R}^{4,4}}$ depends on the choice of $v\in \mathbb{R}^{4,4}$, see~\cite[Lemmas 2.8, 2.9]{FurMar14}. 
\end{remark}

\begin{proposition}\label{integrality}
Let $\mathfrak{B}(V^{r,s})$ be an invariant basis. Then it is an integral basis.
\end{proposition}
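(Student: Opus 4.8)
The plan is to show that for an invariant basis $\mathfrak{B}(V^{r,s})$, the structure constants $c_{ij}^l$ defined by~\eqref{eq:structure constants} all lie in $\{-1,0,1\}$. The key observation is that the structure constants can be recovered directly from the defining relation~\eqref{eq:Lie bracket} by pairing against the basis $B_{r,s}$ of the centre. Concretely, since $B_{r,s}$ is orthonormal with $\la z_l,z_l\ra_{r,s}=\pm 1$, taking $z=z_l$ in~\eqref{eq:Lie bracket} and using~\eqref{eq:structure constants} gives
\begin{equation*}
\la J_{z_l}v_i,v_j\ra_{V}=\la z_l,[v_i,v_j]\ra_{r,s}=\sum_{m=1}^{r+s}c_{ij}^m\la z_l,z_m\ra_{r,s}=\pm c_{ij}^l,
\end{equation*}
where the sign is exactly $\la z_l,z_l\ra_{r,s}$. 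Thus each structure constant equals $\pm\la J_{z_l}v_i,v_j\ra_{V}$, and the whole problem reduces to controlling the value of the bilinear form $\la J_{z_l}v_i,v_j\ra_{V}$.

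First I would invoke the invariance hypothesis of Definition~\ref{invariant basis}: for each basis vector $v_i$ and each $z_l\in B_{r,s}$, there is some index $k$ with $J_{z_l}v_i=\pm v_k$. Substituting this into the expression above, I obtain $\la J_{z_l}v_i,v_j\ra_{V}=\pm\la v_k,v_j\ra_{V}$. Now I would use the second defining requirement, namely that $\mathfrak{B}(V^{r,s})$ is \emph{orthonormal}: the Gram matrix $\la v_k,v_j\ra_{V}$ is diagonal with entries $\pm 1$, so $\la v_k,v_j\ra_{V}=0$ unless $k=j$, in which case it equals $\pm 1$. Combining these two facts, $\la J_{z_l}v_i,v_j\ra_{V}\in\{-1,0,1\}$, and therefore $c_{ij}^l=\pm\la J_{z_l}v_i,v_j\ra_{V}\in\{-1,0,1\}$ as well. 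This is precisely the integrality condition of Definition~\ref{def:standard basis}, completing the argument.

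The step that requires the most care — and which I regard as the crux — is the joint use of the two hypotheses. Invariance alone guarantees $J_{z_l}v_i=\pm v_k$ for a single $z_l$, but it does not by itself bound the pairing $\la v_k,v_j\ra_{V}$; orthonormality alone controls the Gram matrix but says nothing about how the $J_{z_l}$ permute the basis. It is the conjunction of both properties that forces the structure constants into $\{-1,0,1\}$, and the two Examples in the preceding remark show that dropping either hypothesis can fail. I would note explicitly that this is why Definition~\ref{invariant basis} builds in both conditions, so that no hidden normalization of $v$ or rescaling is needed — the computation is entirely formal once the hypotheses are in place, with no genuine analytic obstacle remaining.
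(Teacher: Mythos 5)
Your proposal is correct and follows essentially the same route as the paper: pair the bracket against $z_\ell$ via~\eqref{eq:Lie bracket}, use invariance to write $J_{z_\ell}v_i=\pm v_k$, and use orthonormality of both $B_{r,s}$ and $\mathfrak B(V^{r,s})$ to conclude $c_{ij}^\ell=\pm\delta_{kj}$. The paper additionally proves a small preliminary claim ($J_{z_i}v=\pm J_{z_j}v$ forces $z_i=z_j$ for non-null $v$) in order to conclude the sharper statement $[v_i,v_j]=\pm z_\ell$, but that claim is not needed for integrality itself, so your omission of it is harmless.
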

\begin{proof} 
Assume that $\mathfrak{B}(V^{r,s})$ is an invariant basis for $V^{r,s}$ and that $J_{z_{\ell}}v_i=\pm v_{k}$ for $v_i,v_k\in \mathfrak{B}(V^{r,s})$. Then, by definition of the Lie bracket~\eqref{eq:Lie bracket}, we obtain
\[ 
\langle z_{\ell}, [v_{i},v_{j}]\rangle_{r,s}=\langle J_{z_{\ell}}v_{i}, v_{j}\rangle_{V^{r,s}}
=\langle \pm v_{k}, v_{j}\rangle_{V^{r,s}}=\pm \delta_{kj}.
\]
If $k=j$, then the orthonormality of $\mathfrak B_{r,s}$ and $\langle z_{\ell}, [v_{i},v_{j}]\rangle_{r,s}=\pm 1$ imply that $[v_{i},v_{j}]=\pm z_{\ell}$, and the structure constants in~\eqref{eq:structure constants} are such that $c_{ij}^{\ell}=\pm 1$. If $k\neq j$ then $c_{ij}^{\ell}=0$.
\end{proof}

The definition of an invariant basis leads to the definition of an invariant integral structure on pseudo $H$-type Lie algebras and (invariant) integral uniform subgroup on the respective pseudo $H$-type Lie groups.

\begin{definition}\label{def: integral lattice}
Let $\mathfrak B_{r,s}=\{z_k\}_{k=1}^{r+s}$ be an orthonormal basis for $\mathbb R^{r,s}$ and $\mathfrak B(V^{r,s})=\{v_i\}_{i=1}^{N}$ an invariant basis for a minimal admissible module $V^{r,s}$. An invariant integral structure on the pseudo $H$-type Lie algebra $\mathfrak{n}_{r,s}(V)$ is the vector space over $\mathbb{Z}$ given by
\[
\spn_{\mathbb{Z}}\{\mathfrak B(V^{r,s})\}\oplus \spn_{\mathbb{Z}}\{\mathfrak B_{r,s}\}
=\Big\{\sum_{i=1}^{N}n_i v_{i}\oplus \sum_{k=1}^{r+s}m_kz_{k}
~\Big| ~n_i,m_k\in\mathbb{Z}\Big\}.
\]
An (invariant) integral uniform subgroup on the pseudo $H$-type Lie group $\mathbb{N}_{r,s}(V)=\{(v,z)\mid v\in V^{r,s}, z\in\mathbb{R}^{r,s}\}$ is given by the coordinates
\[
\left(\Big(\sum_{i=1}^Nn_iv_{i}\mid n_i\in\mathbb{Z}\Big),\ \  
\Big(\frac{1}{2}\sum_{k=1}^{r+s}m_kz_{k}\mid m_k\in\mathbb{Z}\Big)\right).
\]
\end{definition}

The main goal of the present work is the classification of invariant integral structures on pseudo $H$-type Lie algebras, which leads to a classification of integral uniform subgroups of the corresponding pseudo $H$-type Lie groups. Note that invariant integral structures form a subclass of integral structures (not necessarily invariant and/or orthonormal) on pseudo $H$-type Lie algebras. In the present work, we take a first step and classify only the invariant orthonormal integral structures. The classification of general integral structures and, more broadly, rational structures is postponed to future work. We illustrate our approach for the first two periods in the Atiyah–Bott periodicity, corresponding to the range of dimensions $r+s\leq 16$.
In the articles~\cite{MR837583, MR487050}, the authors classified the rational uniform subgroups of the Heisenberg groups, where the starting point was a unique invariant integral basis of the Heisenberg algebra. Thus, in essence, we take a first step toward a full classification of rational structures on two-step nilpotent Lie algebras related to Clifford algebras.


\subsection{Subgroups $\mathcal S\subset G(\mathfrak B_{r,s})$ of positive involutions}\label{sec:construction-InvarBeses}


In the present section we study subgroups $\mathcal S$ of $G(\mathfrak B_{r,s})\subset \Pin(r,s)$ which will be a core for the construction of invariant bases $\mathfrak B(V^{r,s})$. Recall that $G(\mathfrak B_{r,s})$ is generated by $\{-\mathbf 1,\mathfrak B_{r,s}\}$. Some of the properties of $\mathcal S$ can be learned from the definition of the subgroups $\mathcal S$, but some of them became clear by  considering their action on minimal admissible modules $V^{r,s}$. The representation map $J\colon \mathcal S\to \End(V^{r,s})$ is not injective for $r-s=3\mod 4$. 
We concentrate on the module, where
the map $J\colon \mathcal S\to \End(V^{r,s})$ is injective and the proofs for another non-equivalent module is modified accordingly.

\begin{definition}\label{Def:S}
We denote by $\mathcal{S}$ a subgroup of $G(\mathfrak B_{r,s})$ satisfying the conditions
\begin{itemize}
\item[$(S1)$] {$ -{\mathbf 1} \notin\mathcal S$;}
\item[$(S2)$] {if $p\in\mathcal S$, then $p\in \Pin(r,0)\times\Spin(0,s)$;}
\item[$(S3)$] {if $p\in\mathcal S$, then $p^2={\mathbf 1}$.}
\end{itemize}
An elements $p\in \mathcal S$ is called a positive involution.
\end{definition}

The name {\it positive involution} refers to the action of $p\in\mathcal S$ on $V^{r,s}$ as follows 
$$
\text{if}\ \la v,v\ra_{V^{r,s}}>0\ \text{ then }\ \la J_pv,J_pv\ra_{V^{r,s}}>0,
$$
and 
$$
\text{if}\ \la v,v\ra_{V^{r,s}}<0,\ \text{ then }\ \la J_pv,J_pv\ra_{V^{r,s}}<0.
$$ 
We denote by $\mathbb{S}_{r,s}$ 
(or just $\mathbb{S}$), the set of all subgroups of $G(\mathfrak B_{r,s})$ satisfying Definition~\ref{Def:S}.
This set is a partially  ordered set with respect to the inclusion relation
among subsets. The following property is obvious.

\begin{lemma}
The groups $\mathcal{S}\in\mathbb{S}_{r,s}$ are  necessarily commutative.
\end{lemma}

\begin{example}\label{example 1}
Consider $G(\mathfrak B_{4,0})$. 
Then some examples of subgroups $\mathcal S\in\mathbb{S}_{r,s}$ are
$$
\mathcal S_1=\{ {\mathbf 1}, z_1z_2z_3\},\  \mathcal S_2=\{ {\mathbf 1}, z_1z_2z_4\},\ \mathcal S_3=\{ {\mathbf 1}, z_1z_3z_4\},\ \mathcal S_4=\{{\mathbf 1}, -z_1z_2z_4\}
$$
and
$$
\mathcal S_5=\{{\mathbf 1}, z_1z_2z_3z_4\}.
$$
The first four groups are isomorphic under the action of the orthogonal group $\Orth(4)$ and they are not isomorphic to the last one. The action of $A\in \Orth(4)$ is defined by $A(z_iz_jz_k)=A(z_i)A(z_j)A(z_k)$. This isomorphism is crucial in the use of subgroups $\mathcal S\in\mathbb{S}_{r,s}$ in the classification of invariant bases $\mathfrak B(V^{4,0})$. 
\end{example}

The groups $\mathcal S_2$ and $\mathcal S_4$ in the above example differs only by sign. It leads to the following definition.

\begin{definition}\label{def:hat-S}
Let $\mathcal S$ be a group satisfying Definition~\ref{Def:S}. We denote by $\widehat{\mathcal S}\subset G(\mathfrak B_{r,s})$ the extended group 
$$
\widehat{\mathcal S}=\mathcal S\cup\{-\sigma:\ \sigma\in\mathcal S\}.
$$
\end{definition}

In Example~\ref{example 1} $\mathcal S_2,\mathcal S_4$ are subgroups of $G(B_{4,0})$, where we fix the basis $\{z_1,z_2,z_3,z_4\}$. The subgroups  $\mathcal S_2,\mathcal S_4$ are different, nevertheless
$$
\widehat{\mathcal S_4}=\widehat{\mathcal S_2}=\{\pm {\bf 1},\pm z_1z_2z_4\}.
$$


\subsection{Generators for $\mathcal S$}\label{sec:generators}


In this section, we describe the groups $\mathcal S\in\mathbb{S}$ by using their generating sets.

\begin{definition}\label{def:PI}
We denote by $PI=\{\,p_{i}\,\}_{i=1}^{\ell}$, where $\ell=\#[PI]$ is the cardinality of the set $PI$, a subset in $G(\mathfrak B_{r,s})$ satisfying the conditions:
\begin{itemize}
\item [$(PI1)$] \quad ${\mathbf 1}\notin PI$, $p_{i}p_{j}=p_{j}p_{i}$, and $p_{i}\in PI$ satisfy $(S2)-(S3)$ in Definition~\ref{Def:S} for all $i=1,\ldots,\ell$.
\item [$(PI2)$] \quad The vectors 
\begin{equation}\label{all products of p} 
\{\, {\mathbf 1}, p_{1},\ \ldots,\ p_{\ell},\ \ 
p_{i_{1}}\cdots p_{i_{k}}\mid \ 1\leq i_{1}<\ldots<i_{k}\leq \ell,\ k=2,\ldots,\ell\}
\end{equation}
\quad are linearly independent in the vector space {\em $\Cl_{\,r,s}$}.
\end{itemize}
\end{definition}
  
\begin{proposition}
Condition $(PI2)$ is equivalent to the condition
\begin{itemize}
\item [$(PI2)'$] 
\quad none of the products $p_{i_1}\cdots p_{i_k}$, 
$1\leq i_1<\cdots< i_k \leq \ell$, $k=1,\ldots,\ell$, is equal to $\pm\mathbf 1$.
\end{itemize}
\end{proposition}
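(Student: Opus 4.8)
The plan is to prove the two implications separately, the direction $(PI2)\Rightarrow(PI2)'$ being almost immediate and the converse carrying the real content. Throughout I will use that, by $(PI1)$ together with $(S3)$, the elements of $PI$ pairwise commute and satisfy $p_i^2=\mathbf{1}$, so for every subset $S\subseteq\{1,\ldots,\ell\}$ the product $p_S:=\prod_{i\in S}p_i$ is well defined independently of the order of the factors and satisfies $p_S^2=\mathbf{1}$. With this notation the $2^\ell$ vectors listed in $(PI2)$ are precisely $\{p_S\mid S\subseteq\{1,\ldots,\ell\}\}$, with $p_\emptyset=\mathbf{1}$.

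For $(PI2)\Rightarrow(PI2)'$ I would argue by contraposition. If some nonempty product $p_{i_1}\cdots p_{i_k}$ equals $\mathbf{1}$ or $-\mathbf{1}$, then the relation $p_{i_1}\cdots p_{i_k}\mp\mathbf{1}=0$ is a nontrivial linear dependence among the vectors of $(PI2)$, since both $\mathbf{1}$ and the product $p_{i_1}\cdots p_{i_k}$ appear in the list. Hence $(PI2)$ fails. For the converse $(PI2)'\Rightarrow(PI2)$ the key is to identify each $p_S$ inside a fixed vector-space basis of $\Cl_{r,s}$. Recall that the ordered monomials $z_{j_1}\cdots z_{j_m}$ with $1\le j_1<\cdots<j_m\le r+s$, together with $\mathbf{1}$, form a basis of $\Cl_{r,s}$, and that every element of $G(B_{r,s})$ — in particular every $p_S$ — equals $\pm$ one such ordered monomial, obtained by reordering via $z_az_b=-z_bz_a$ for $a\ne b$ and reducing squares via $z_a^2=\pm\mathbf{1}$. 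Thus each $p_S$ is a signed standard basis vector of $\Cl_{r,s}$.

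The crucial step is to show that distinct subsets produce distinct signed basis vectors. Suppose $p_S=\pm p_T$ with $S\ne T$. Multiplying both sides by $p_T$ and using commutativity together with $p_i^2=\mathbf{1}$ gives $p_Sp_T=p_{S\triangle T}$, where $S\triangle T$ is the symmetric difference, while $p_T^2=\mathbf{1}$ turns the right-hand side of $p_S=\pm p_T$ into $p_{S\triangle T}=\pm\mathbf{1}$ with $S\triangle T\ne\emptyset$; this contradicts $(PI2)'$. Therefore the $2^\ell$ elements $p_S$ are pairwise-distinct signed standard basis monomials of $\Cl_{r,s}$, and since scaling basis vectors by $\pm1$ preserves linear independence, they are linearly independent, which is exactly $(PI2)$.

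The main obstacle is the bridge between the purely multiplicative statement $(PI2)'$ and the linear-algebraic statement $(PI2)$: the decisive observation is that $G(B_{r,s})$ consists of signed standard basis monomials of $\Cl_{r,s}$, so a linear dependence can only arise from two of the $p_S$ coinciding up to sign, and the symmetric-difference computation converts exactly such a coincidence into a nonempty product equal to $\pm\mathbf{1}$. The only place requiring genuine care is bookkeeping of signs and confirming the reduction of every $p_S$ to an ordered monomial.
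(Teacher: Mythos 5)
Your proof is correct and follows essentially the same route as the paper's: both reduce the question to the fact that every product $p_{i_1}\cdots p_{i_k}$ is a signed standard monomial from the basis of $\Cl_{r,s}$, so linear independence can only fail if two such products coincide up to sign. Your symmetric-difference computation $p_Sp_T=p_{S\triangle T}$ makes explicit the step the paper leaves implicit when it calls the collection a ``reduced collection of linearly independent basis vectors,'' which is a welcome clarification rather than a deviation.
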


\begin{proof}
The elements
\begin{equation}\label{eq:Clifford basis}
\{\epsilon_0\mathbf 1,\  \epsilon_{i_1,\ldots,i_k}
z_{i_{1}}\cdots z_{i_{k}}\}\ \subset \Cl_{r,s},
\end{equation}
form a basis for $\Cl_{r,s}$. Here $1\leq i_1<\cdots< i_k \leq r+s$, $k=1,\ldots,r+s$, and $\epsilon_0$, $\epsilon_{i_1,\ldots,i_k}$ are equal to $1$ or $-1$. 

It is obvious that $(PI2)$ implies $(PI2)'$. Assume that condition 
$(PI2)'$ is fulfilled for a collection~\eqref{all products of p}. Collection~\eqref{all products of p} is a subfamily of the basis~\eqref{eq:Clifford basis} for the vector space $\Cl_{\,r,s}$, and therefore elements in~\eqref{all products of p} are linearly independent.
\end{proof}

As an example of a set $PI$ we present the minimal length positive involutions, which can be  classified in the following types and which will be generalized later in Definition~\eqref{def:TypesT1T2} to longer ones: 
\begin{equation}\label{eqT1T2}
\begin{array}{lll}
&T_1\,\,
\begin{cases}
p=z_{i_{1}}z_{i_{2}}z_{i_{3}}z_{i_{4}}, ~\text{where all $z_{i_{k}}$ are positive basis vectors;}\\
p=z_{i_{1}}z_{i_{2}}z_{i_{3}}z_{i_{4}}, ~\text{where all $z_{i_{k}}$ are negative basis vectors;}\\
p=z_{i_{1}}z_{i_{2}}z_{i_{3}}z_{i_{4}},
~\text{where two $z_{i_{k}}$ are positive and two $z_{i_{l}}$}\\
\text{\qquad\qquad  \qquad\qquad\qquad\qquad\qquad\qquad    
are negative basis vectors;}
\end{cases}\\
&T_2\,\,
\begin{cases}
q=z_{i_{1}}z_{i_{2}}z_{i_{3}},~\text{where all $z_{i_{k}}$ are positive basis vectors;}\\
q=z_{i_{1}}z_{i_{2}}z_{i_{3}},~\text{where one $z_{i_{k}}$ is positive and two $z_{i_{l}}$}\\
\text{\qquad\qquad \qquad\qquad\qquad \qquad\qquad\qquad  
are negative basis vectors.}
\end{cases}
\end{array}
\end{equation}

An easy combinatorial computation shows that generally positive involutions can contain either $3 \mod 4$ or $4\mod 4$ basis vectors. This observation inspires us to make a more general definition.

\begin{definition}\label{def:TypesT1T2}
A positive involution containing\ \ $4\mod 4$ basis vectors is called a type $T_1$ involution. A positive involution containing\ \  $3\mod 4$ basis vectors is called a type $T_2$ involution.
\end{definition}

\begin{notation}\label{set of components}
For an element $\sigma=\pm \,z_{i_1}\cdots z_{i_k}\in G(\mathfrak B_{r,s})$,
we denote by $\mathfrak{b}(\sigma)=\{z_{i_1}, \ldots,z_{i_{k}}\}$ the set of the vectors in the product $\sigma$, and by $|\mathfrak{b}(\sigma)|$ the number of the vectors in $\mathfrak{b}(\sigma)$. 
Analogously, $\mathfrak{b^+}(\sigma)$ \big($\mathfrak{b^-}(\sigma)$\big) is the set of positive $($negative$)$ vectors in $\sigma$ and $|\mathfrak{b^+}(\sigma)|$ $(|\mathfrak{b^-}(\sigma)|)$ is the cardinality of the respective sets.
\end{notation}


\begin{proposition}\label{prop:product}
The following properties can be easily verified.
\begin{itemize} 
\item[(A)] Two type $T_1$ involutions
$p_1$
and 
$p_2$ commute if the number
$|\mathfrak b(p_1)\cap \mathfrak b(p_2)|$ is even. The product $p_1p_2$ is an involution of type $T_1$.
\item[(B)]  A type $T_1$ involution  $p$
and a type $T_2$ involution $q$
commute if 
the number
$|\mathfrak b(p)\cap \mathfrak b(q)|$ is even. The product $pq$ is an involution of type $T_2$.
\item[(C)] Two type $T_2$ involutions  $q_1$
and $q_2$ commute if the number
$|\mathfrak b(q_1)\cap \mathfrak b(q_2)|$ is odd. The product $q_1q_2$ is an involution of type $T_1$.
\end{itemize}
\end{proposition}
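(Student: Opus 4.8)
The plan is to verify each of the three claims by direct computation in the Clifford algebra, using two basic facts: distinct orthonormal basis vectors $z_i, z_j$ anticommute (that is, $z_iz_j=-z_jz_i$), and each satisfies $z_i^2=\pm\mathbf 1$ according to its sign as recorded in~\eqref{eq:brs}. The commutation of two monomials $\sigma=z_{i_1}\cdots z_{i_a}$ and $\tau=z_{j_1}\cdots z_{j_b}$ is governed purely by counting: to move $\tau$ past $\sigma$ one collects a sign $(-1)^{ab}$ from the $ab$ anticommutations of distinct letters, then an extra sign for each letter the two products share, since a shared letter $z_k$ must be pulled through itself rather than past a distinct vector. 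Thus the plan is first to isolate this sign rule as the engine driving all three parts.

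First I would record the commutation sign precisely. If $\sigma$ and $\tau$ are reduced monomials with $|\mathfrak b(\sigma)|=a$, $|\mathfrak b(\tau)|=b$, and $|\mathfrak b(\sigma)\cap\mathfrak b(\tau)|=c$ shared letters, then $\sigma\tau=(-1)^{ab-c}\tau\sigma$; the $-c$ correction arises because each shared letter, when commuted through its twin, contributes one fewer sign change than a distinct pair would (a shared pair swaps with sign $(-1)^{1}$ internally versus the naive $(-1)^{2}$ counted in $ab$). So $\sigma$ and $\tau$ commute exactly when $ab-c$ is even. I would then simply substitute the parities dictated by the types. For (A), $a=b\equiv 0 \pmod 2$ (type $T_1$ means $4\bmod 4$, hence even length, in fact $\equiv 0\bmod 4$), so $ab$ is even and commutation reduces to $c$ even. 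For (B), $a\equiv 0$, $b\equiv 3\equiv 1\pmod 2$, so $ab$ even and again $c$ even is the criterion. For (C), $a=b\equiv 1\pmod 2$, so $ab$ is odd and commutation requires $c$ odd.

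Next I would determine the type of each product. The length of $p_1p_2$ after reduction is $a+b-2c$, since the $c$ shared letters cancel in pairs (each shared $z_k$ squares to $\pm\mathbf 1$, removing two letters and leaving a scalar that only affects the sign, not membership in $G(B_{r,s})$). For (A) with $a\equiv b\equiv 0\pmod 4$, the length $a+b-2c\equiv -2c\pmod 4$; but since the involutions commute we have $c$ even, whence $-2c\equiv 0\pmod 4$, giving type $T_1$. For (B) with $a\equiv 0$, $b\equiv 3\pmod 4$ and $c$ even, the length is $\equiv 3-2c\equiv 3\pmod 4$, type $T_2$. For (C) with $a\equiv b\equiv 3\pmod 4$ and $c$ odd, the length is $\equiv 6-2c\equiv 6-2\equiv 0\pmod 4$ (using $c$ odd, so $2c\equiv 2\pmod 4$), type $T_1$. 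I would finally note that the product remains a positive involution: commutativity gives $(p_1p_2)^2=p_1^2p_2^2=\mathbf 1$, and the sign contributed by squaring the shared positive basis vectors is $+1$, so $(S2)$ and $(S3)$ persist.

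The only genuinely delicate point is pinning down the sign rule and the squaring signs carefully enough that the type computation is rigorous rather than heuristic; in particular one must confirm that the $c$ cancelled letters are positive basis vectors (so their squares are $+\mathbf 1$ and do not spoil the positive-involution property), which is where the hypothesis that both factors lie in $\Pin(r,0)\times\Spin(0,s)$ and the definition of $T_1, T_2$ enter. Since the proposition is explicitly labelled as ``easily verified,'' I expect no deeper obstacle beyond this bookkeeping, and the argument is a short parity count once the commutation sign $(-1)^{ab-c}$ is in hand.
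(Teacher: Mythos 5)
Your proof is correct and follows the same route the paper merely sketches: the paper's entire proof is the single observation that distinct orthogonal generators anticommute, and your sign rule $\sigma\tau=(-1)^{ab-c}\tau\sigma$ together with the reduced length $a+b-2c$ is exactly the parity bookkeeping that observation is meant to trigger. One small caveat: your closing worry that one ``must confirm that the $c$ cancelled letters are positive basis vectors'' is a red herring and is in fact false in general --- shared letters can perfectly well be negative (e.g.\ $p_1=z_1z_2z_{r+1}z_{r+2}$ and $p_2=z_3z_4z_{r+1}z_{r+2}$ with $z_{r+1},z_{r+2}$ negative are commuting type $T_1$ involutions) --- but this does no harm, since $(p_1p_2)^2=p_1^2p_2^2=\mathbf 1$ already settles $(S3)$, and $(S2)$ follows because the product contains $n_1+n_2-2c^-$ negative letters, which is even whatever the signs of the cancelled letters.
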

\begin{proof}
The proof is based on the Clifford algebra property 
$$
z_1z_2+z_2z_1=-2\la z_1,z_2\ra_{r,s}{\mathbf 1},\quad z_1,z_2\in \mathbb{R}^{r,s},
$$
which for orthogonal vectors $z_1$ and $z_2$ leads to 
$
z_1z_2=-z_2z_1$.
\end{proof}

\begin{notation}\label{number of involutions in PI}
We denote by $\mathbb{PI}_{r,s}$
the collection of sets $PI$ satisfying Definition~\ref{def:PI}. 
The set $\mathbb{PI}_{r,s}$ is partially ordered by the inclusion relation similar to $\mathbb{S}_{r,s}$.
If $PI\in \mathbb{PI}_{r,s}$, then we denote by $\mathcal S(PI)$ a group generated by the set $PI$.
\end{notation}

\begin{proposition}\label{S to PI to S}
\begin{enumerate}
\item Let $PI\in \mathbb{PI}$. Then 
\begin{equation}\label{eq:subgroup}
\mathcal{S}(PI)=\{ {\mathbf 1}, \ p_{1},\ldots,p_{\ell}, \ \ldots,\  p_{i_{1}}\cdots p_{i_{k}}\vert\ 1\leq i_{1}<\cdots<i_{k}\leq \ell=\#[PI]\}
\end{equation}
is a group of order $\#[\,\mathcal{S}(PI)\,] = 2^{\ell}$ in $G(\mathfrak B_{r,s})$ and $\mathcal{S}(PI)\in \mathbb{S}$.
\item Conversely, let $\mathcal{S}\in\mathbb{S}$. Then there is a (non unique) set $PI\in \mathbb{PI}$ 
such that $\mathcal{S}(PI)=\mathcal{S}$.
\item Let $\varepsilon=(\varepsilon_1,\ldots, \varepsilon_{\ell})$ be a tuple consisting of $\pm 1$, and $PI=\{p_{i}\}_{i=1}^{\ell} \in\mathbb{PI}_{r,s}$. Then 
$
\varepsilon\cdot PI=\{\varepsilon_1p_1,\ldots,\varepsilon_{\ell}p_{\ell}\}\in \mathbb{PI}_{r,s}$ and 
$
\widehat{\mathcal{S}(PI)}=\widehat{\mathcal{S}(\varepsilon\cdot PI)}$. 
\end{enumerate}
\end{proposition}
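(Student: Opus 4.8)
The plan is to prove the three assertions of Proposition~\ref{S to PI to S} in sequence, exploiting the linear-independence condition $(PI2)$ (equivalently $(PI2)'$) that is the defining feature of the set $PI$.

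\medskip

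\emph{Part (1).} First I would check that the set displayed in~\eqref{eq:subgroup} is exactly the subgroup $\mathcal{S}(PI)$ generated by $PI$, and that it is closed, associative, has ${\mathbf 1}$, and contains inverses. Closure is immediate from commutativity $(PI1)$ and the relation $p_i^2={\mathbf 1}$ from $(S3)$: any product of the $p_i$'s can be reordered and reduced so that each generator appears at most once, yielding another element of the form $p_{i_1}\cdots p_{i_k}$. Since $p_i^2={\mathbf 1}$, every element is its own inverse, so inverses stay in the set. To get the order $2^{\#[PI]}$, I would argue that the $2^{\ell}$ formal products indexed by subsets of $\{1,\ldots,\ell\}$ are \emph{distinct} elements of $G(B_{r,s})$; this is where $(PI2)'$ enters, since if two such products coincided, their quotient would be a nonempty product of distinct $p_i$'s equal to $\pm{\mathbf 1}$ (note one must allow for the sign, because in $G(B_{r,s})$ the relation $\sigma\tau=\pm{\mathbf 1}$ is what matters), contradicting $(PI2)'$. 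Finally I would verify $\mathcal{S}(PI)\in\mathbb{S}$ by checking $(S1)$, $(S2)$, $(S3)$ for a general element: $(S3)$ follows since each $p_i^2={\mathbf 1}$ and they commute; $(S2)$ follows because each generator lies in $\Pin(r,0)\times\Spin(0,s)$ and this subgroup is closed under products; and $(S1)$ is precisely the statement that no product equals $-{\mathbf 1}$, which is again $(PI2)'$ applied together with the sign bookkeeping.

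\medskip

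\emph{Part (2).} Given $\mathcal{S}\in\mathbb{S}$, I would extract a generating set $PI$ by a standard maximal-independent-subset argument: since $\mathcal{S}$ is a finite abelian $2$-group in which every nonidentity element is an involution (by $(S3)$), it is an elementary abelian $2$-group, hence a vector space over $\mathbb{F}_2$. I would choose $PI=\{p_1,\ldots,p_\ell\}$ to be a set of elements whose images form an $\mathbb{F}_2$-basis of $\mathcal{S}/\{\pm{\mathbf 1}\}$ (or rather of $\mathcal{S}$ itself, using $-{\mathbf 1}\notin\mathcal{S}$). Conditions $(PI1)$ hold automatically from $(S1)$--$(S3)$ and commutativity of $\mathcal{S}$, and $(PI2)'$ holds because a basis has no nontrivial relations, so no product of distinct $p_i$'s can be ${\mathbf 1}$; the sign case ($\pm{\mathbf 1}$) is excluded because $-{\mathbf 1}\notin\mathcal{S}$ so $\mathcal{S}$ contains no element equal to $-{\mathbf 1}$ either. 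Then $\mathcal{S}(PI)=\mathcal{S}$ by Part~(1) and a dimension count. Non-uniqueness is clear since any change of $\mathbb{F}_2$-basis gives another valid $PI$.

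\medskip

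\emph{Part (3).} This is the most delicate part and I expect it to be the main obstacle, because replacing $p_i$ by $\varepsilon_i p_i$ interacts subtly with the sign conventions in $G(B_{r,s})$. I would first verify $\varepsilon\cdot PI\in\mathbb{PI}_{r,s}$: each $\varepsilon_i p_i$ still satisfies $(S2)$--$(S3)$ (multiplying an involution by $-1$ preserves squaring to ${\mathbf 1}$ and keeps membership in $\Pin(r,0)\times\Spin(0,s)$ since $-{\mathbf 1}$ lies in that subgroup), commutativity is unchanged, and $(PI2)'$ for $\varepsilon\cdot PI$ follows from $(PI2)'$ for $PI$ because any product $\prod \varepsilon_{i_j} p_{i_j}=\pm\prod p_{i_j}$, so it equals $\pm{\mathbf 1}$ only if the underlying product $\prod p_{i_j}$ does. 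For the equality $\widehat{\mathcal{S}(PI)}=\widehat{\mathcal{S}(\varepsilon\cdot PI)}$, I would show both sides equal $\{\pm\sigma:\sigma\in\mathcal{S}(PI)\}$. The key computation is that every generator of $\mathcal{S}(\varepsilon\cdot PI)$, namely a product $\prod \varepsilon_{i_j}p_{i_j}=\pm\prod p_{i_j}$, differs from the corresponding element of $\mathcal{S}(PI)$ only by a sign, hence lies in $\widehat{\mathcal{S}(PI)}$; taking the $\pm$-closure on both sides collapses exactly these sign ambiguities, giving the two extended groups equal. I would remark that the individual groups $\mathcal{S}(PI)$ and $\mathcal{S}(\varepsilon\cdot PI)$ need \emph{not} coincide (as Example~\ref{example 1} with $\mathcal{S}_2$ and $\mathcal{S}_4$ shows), which is precisely why passing to $\widehat{\mathcal{S}}$ via Definition~\ref{def:hat-S} is the right framework for the statement.
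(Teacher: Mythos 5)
Your proposal is correct, and in substance it follows the same route as the paper. Part (1) matches the paper's argument: the paper deduces distinctness of the $2^{\ell}$ products from the linear independence in $(PI2)$ and gets $(S1)$ from the observation that $p$ and $-p$ cannot both lie in a linearly independent set, while you phrase the same point through $(PI2)'$; these are interchangeable by the proposition preceding the statement. For part (2) the paper runs a greedy induction, adjoining a positive involution $q\notin\mathcal{S}(PI')$ at each step and checking that $PI'\cup\{q\}$ still satisfies Definition~\ref{def:PI}; your recasting of $\mathcal{S}$ as an elementary abelian $2$-group (using $(S3)$, commutativity, and $-\mathbf{1}\notin\mathcal{S}$) and choosing an $\mathbb{F}_2$-basis is the conceptual form of exactly that induction, and it buys a cleaner justification of $(PI2)'$ (no nontrivial relation among basis elements, with the $-\mathbf{1}$ case killed by $(S1)$) at no extra cost. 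For part (3) the paper only says the claim follows easily from the definition; your explicit sign bookkeeping, showing each generator of $\mathcal{S}(\varepsilon\cdot PI)$ equals the corresponding generator of $\mathcal{S}(PI)$ up to sign so that the extended groups $\widehat{\mathcal{S}}$ coincide, supplies the omitted detail correctly, and your remark that $\mathcal{S}(PI)$ and $\mathcal{S}(\varepsilon\cdot PI)$ themselves need not coincide is consistent with the paper's Example~\ref{example 1}.
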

\begin{proof}
Set in~\eqref{all products of p} is linearly independent and coincides with $\mathcal{S}(PI)$ in~\eqref{eq:subgroup}, therefore $\#[\mathcal{S}(PI)]=2^{\ell}$. If $p$ is in the set~\eqref{all products of p},
then $-p$ is not in the set~\eqref{all products of p}, which implies that 
$-{\mathbf 1}\notin\mathcal{S}(PI)$. Any $p\in \mathcal{S}(PI)$ is a positive involution by definition of the set $PI$. We showed (1). 

The second property will be proved by induction arguments 
with respect to the order of the group $\mathcal{S}$.
Let $\mathcal{S}\in \mathbb{S}_{r,s}$ be given. 
Assume $p_{1}\in\mathcal{S}$ and if there are no elements in $\mathcal{S}$ other than
${\mathbf 1}, p_{1}$, then we can put $PI=\{p_{1}\}$ and $\mathcal{S}(PI)=\mathcal{S}$.  

Assume now that there is a generating set $PI'=\{p_1, \ldots,p_{\ell}\}$, with $\ell\geq 2$ and with $p_j\in\mathcal S$, satisfying Definition~\ref{def:PI}. 
If 
$$
\mathcal S(PI')=\{{\mathbf 1},\  p_{1},\ldots,p_{\ell},\ \ldots,\ p_{i_{1}}\cdots p_{i_{k}}\mid\ 1\leq i_{1}<\cdots<i_{k}\leq \ell,\ k=1,\ldots,\ell\},
$$
is a proper subset of $\mathcal{S}$, 
then there is a positive involution $q\in \mathcal{S}$ such that $q\not\in \mathcal{S}(PI')$,
and $q\not= \pm \,{\mathbf 1}$.
Consider the set of commuting involutions
$$
\mathcal S(PI')\cdot q=\{q, \ p_{1}q,\ldots,p_{\ell}q,\ \ldots,\ p_{i_{1}}\cdots p_{i_{k}}q\mid\ 1\leq i_{1}<\cdots<i_{k}\leq \ell,\ k=1,\ldots,\ell\}.
$$
If $p_{i_{1}}\cdots p_{i_{m}}=p_{j_{1}}\cdots p_{j_{m'}}q$, then
$q\in \mathcal{S}(PI')$, as a product of involutions $p_{j_{1}}\cdots p_{j_{m'}}$ and $p_{i_{1}}\cdots p_{i_{m}}$ from $\mathcal{S}(PI')$. Thus none of the elements in $\mathcal S(PI')$ can be written in the form $p_{j_{1}}\cdots p_{j_{m'}}q$ for $p_{j_{1}}\cdots p_{j_{m'}}\in \mathcal S(PI')$. If
\[
p_{i_{1}}\cdots p_{i_{k}}\neq p_{j_{1}}\cdots p_{j_{k'}}\quad\text{for}\quad p_{i_{1}}\cdots p_{i_{k}}, ~p_{j_{1}}\cdots p_{j_{k'}}\in \mathcal{S}(PI'),
\]
then 
$p_{i_{1}}\cdots p_{i_{k}}q\neq p_{j_{1}}\cdots p_{j_{k'}}q$. So the set
$PI''=PI'\cup \{q\}$ satisfies Definition~\ref{def:PI}. 

Continuing the procedure, we find in finitely many steps a set $PI$ satisfying Definition~\ref{def:PI} such that $\mathcal{S}(PI)=\mathcal{S}$.

The proof of the last assertion easily follows from Definition~\ref{def:PI}.
\end{proof}


\subsection{Relation of $\mathcal S$ and an isotropy subgroup $\mathcal S_v$}


The groups $\mathcal S\in \mathbb S_{r,s}$ act on the Clifford module $V^{r,s}$ by the representation of the Clifford algebra. Namely if $p=z_{i_i}\cdots z_{i_k}\in \mathcal S$, then $J_pv=J_{z_{i_1}}\ldots J_{z_{i_k}}v$.

\begin{definition}
Let $v\in V^{r,s}$ be a non-null vector $\langle v,v\rangle_{V^{r,s}}\neq 0$, The isotropy subgroup $\mathcal S_{v}\subset G(\mathfrak B_{r,s})$ of the vector $v$ is
\[
\mathcal S_{v}=\{\sigma\in G(\mathfrak B_{r,s})\mid J_{\sigma}v=v\}.
\]
\end{definition}

Now we relate groups $\mathcal S\in \mathbb S_{r,s}$ of positive involutions with the isotropy subgroups $\mathcal S_v$ for some non-null vectors $v\in V^{r,s}$ and show that they are in a close relation.

\begin{proposition}
Let $v\in V^{r,s}$ be a non-null vector and let $\mathcal S_{v}$ the isotropy
subgroup of $v$. Then $\mathcal S_v\in \mathbb S_{r,s}$. 
\end{proposition}

\begin{proof}
Let us assume that $V^{r,s}$ is the Clifford module, where the volume form $\omega_{r,s}=\prod_{k=1}^{r+s}z_k$ acts as identity; that is $J_{\omega_{r,s}}u=J_{\mathbf 1}u=u$ for any $u\in V^{r,s}$. Then it is clear that $-{\mathbf 1}\notin \mathcal S_{v}$. To check the second property we take 
$\sigma\in \mathcal S_v\subset G(\mathfrak B_{r,s})$ and assume by contrary that $\sigma$ is a product containing an odd number of negative basis vectors from $\mathfrak B_{r,s}$. Then for $v\in V^{r,s}$ with $\la v,v\ra_{V^{r,s}}>0$ we obtain
$$
0<\la v,v\ra_{V^{r,s}}=\la J_{\sigma}v,J_{\sigma}v\ra_{V^{r,s}}<0
$$ 
by~\eqref{eq:isometry}, which is a contradiction. Similar arguments apply for a vector $v\in V^{r,s}$ with $\la v,v\ra_{V^{r,s}}<0$. Hence $\sigma\in \Pin(r,0)\times \Spin(0,s)$.  

The square of every element in $G(\mathfrak B_{r,s})$ equals either ${\mathbf 1}$ or $-{\mathbf 1}$.
If $\sigma\in \mathcal S_v$, then $J_{\sigma}^{2}=J_{\sigma^2}=\Id_{V^{r,s}}$. Hence ${\sigma}^{2}={\mathbf 1}$.

If $r-s=3\mod 4$ and $S_v$ includes the volume form $\omega{r,s}$ which acts as minus identity on $V^{r,s}$, then we change $\omega{r,s}$ to $-\omega{r,s}$ and the proof will follow. 
\end{proof}


\begin{notation}\label{def:ell(r,s)-0}
	Let $\mathbb{S}^{M}_{r,s}$ and $ \mathbb{PI}^{M}_{r,s}$
	be subsets of $\mathbb{S}_{r,s}$, respectively $\mathbb{PI}_{r,s}$,
	consisting of a maximal number of positive involutions. Then
	$PI\in \mathbb{PI}^{M}_{r,s}$ if and only if $\mathcal{S}(PI) \in\mathbb{S}^{M}_{r,s}$,
	although the correspondence $PI\mapsto \mathcal{S}(PI)$ is not injective.
\end{notation}

\begin{proposition}\label{mathcal S and  Sv}
Let $\mathcal{S}\in\mathbb{S}_{r,s}^M$ and $PI\in \mathbb{PI}_{r,s}^M$ 
be such that $\mathcal{S}(PI)=\mathcal{S}$. Then there is a non-null vector $v\in V^{r,s}$ such that $\mathcal S=\mathcal S_v$. In particular, $\#[\mathcal{S}]=\#[\mathcal S_{v}]=2^{\#[PI]}$.
\end{proposition}

\begin{proof}
Let $PI=\{p_{1},\ldots,p_{\ell}\}\in \mathbb{PI}_{r,s}^M$, $\mathcal{S}=\mathcal{S}(PI)$ and let 
\begin{equation}\label{eq:pm1}
E^{+1}(p_k)=\{u\in V^{r,s}\mid J_{p_k}u=u\},\quad
E^{-1}(p_k)=\{u\in V^{r,s}\mid J_{p_k}u=-u\}.
\end{equation}
We show that the intersection $E=\bigcap_{k=1}^{\ell}E^{+1}(p_k)$
contains a non-null vector $v\in V^{r,s}$ such that $\mathcal{S}(PI)=\mathcal S_{v}$. 

Let $r-s\neq 3\mod 4$ and let $E^{\pm1}(p_k)$, $E^{-1}(p_k)$ be as in~\eqref{eq:pm1}. If one of the spaces $E^{\pm 1}(p_k)$ is trivial, i.e. contains only $\{0\}$, then the symmetric
bi-linear form $\la.\,,.\ra_{V^{r,s}}$ on the non-trivial subspace is non-degenerate.
If both of $E^{\pm 1}(p_k)$ are non-trivial spaces, then  
they are orthogonal with respect to $\la.\,,.\ra_{V^{r,s}}$ and the restriction
of $\la.\,,.\ra_{V^{r,s}}$ onto $E^{\pm 1}(p_k)$ is non-degenerate too.

Assume $E^{+1}(p_1)\not= \{0\}$. Then the space $E^{+1}(p_1)$ is invariant under the action of the  involution $J_{p_2}$. Therefore,
$E^{+1}(p_1)\bigcap E^{+1}(p_2)\neq \{0\}$. By repeating the procedures we get that 
$E=\bigcap_{k=1}^{\ell}\,E^{+1}(p_k) \not=\{0\}$ and the restriction
of $\la.\,,.\ra_{V^{r,s}}$ onto $E$ is non-degenerate. Thus there is a
non-null vector $v\in E$ such that 
$J_{p_{k}}v=v$ for all $k=1,\ldots,\ell$. Hence $\mathcal{S}(PI)\subset\mathcal S_{v}$. Since the group $\mathcal{S}=\mathcal{S}(PI)$ was chosen maximal, we obtain $\mathcal{S}(PI)=\mathcal S_{v}$.

If $r-s=3\mod 4$, then without loss of generality we can assume that $J_{p_1}$ acts as $-\Id$. We change $p_1$ to $-p_1$ to get $E^{+1}(p_1)=\{u\in V^{r,s}\mid J_{p_1}u=u\}$ and continue the proof as above. 
\end{proof}

\begin{corollary}\label{orbit of non-null vector}
Let $\mathcal S\in\mathbb{S}_{r,s}$, and let $\mathcal S_v=\mathcal S$ be an isotropy subgroup of $v\in V^{r,s}$. The orbit
\begin{equation}\label{eq:orbit}
O_v=G(\mathfrak B_{r,s}).v:=\{J_{\sigma}v\mid\  \sigma\in G(\mathfrak B_{r,s})\}
\end{equation} 
contains an invariant basis $\mathfrak B(V^{r,s})$ of the minimal admissible module $V^{r,s}$ counted with signs $\pm$.
Hence $G(\mathfrak B_{r,s})/S_v\cong {G(\mathfrak B_{r,s}).v}$ and
$\dim(V^{r,s})=\frac{1}{2}\#[G(\mathfrak B_{r,s}).v]$.
\end{corollary}

\begin{proof}
If the group $\mathcal S_v$ is an isotropy subgroup of an invariant basis, then
\begin{equation}\label{max condition}
\#[\mathcal S_v]\cdot \#[G(\mathfrak B_{r,s}).v]=2^{r+s+1}=\#[G(\mathfrak B_{r,s})].
\end{equation}
Since the module is minimal admissible and the basis vectors are counted twice (with $\pm$ signs), we conclude $\#[G(\mathfrak B_{r,s}).v]=2\dim(V^{r,s})$. Therefore $\dim(V^{r,s})=2^{r+s-\ell}$, where $\ell=\#[PI]$ for $\mathcal S=\mathcal S(PI)$.
\end{proof}

\begin{notation}\label{def:ell(r,s)}
We denote by $\ell(r,s)$ the maximal number of involutions in $\mathbb{PI}_{r,s}^M$.
The value $\ell(r,s)$ depends only on the signature $(r,s)$ and it satisfies $2^{\ell(r,s)}=\frac{2^{r+s}}{\dim(V^{r,s})}$ by Corollary~\ref{orbit of non-null vector}. 
\end{notation}

\begin{proposition}\label{periodocity of ell(r,s)}
The number $\ell(r,s)$ possesses the following periodicity properties:
\begin{eqnarray*}
\ell(r+8,s)&=&\ell(r,s+8)=\ell(r+4,s+4)=\ell(r,s)+4\\
&=&
\ell(r,s)+\ell(8,0)=\ell(r,s)+\ell(0,8)=\ell(r,s)+\ell(4,4).
\end{eqnarray*}
\end{proposition}
\begin{proof}
The number $\ell(r,s)$ is determined by  
$2^{\ell(r,s)}\cdot \dim(V^{r,s})=2^{r+s}$.
Hence,
\[
2^{\ell(r+8,s)}\cdot \dim(V^{r+8,s})=2^{r+8+s}=2^{r+s}2^{8}
=2^{\ell(r,s)}\cdot \dim(V^{r,s}) \cdot 2^{8}.
\]
We know that $\dim(V^{r+8,s})=2^{4} \dim(V^{r,s})$, see~\cite[Section 4.1]{FurMar17}.
Hence it holds $\ell(r+8,s)=\ell(r,s)+4$.

Other equalities can be shown by the same arguments.
\end{proof}

\begin{proposition}
\begin{align}\label{inequality of ell(r,s)}
&\ell(r,s)\leq \ell(r+s,0),\quad\ell(r,s)\leq \ell(r+1,s),\quad\ell(r,s)\leq \ell(r,s+1).
\end{align}
\end{proposition}
\begin{proof}
The inequalities follow from the structure of involutions in
\eqref{eqT1T2}.
 \end{proof}

The orbit $O_v={G(\mathfrak B_{r,s}). v}$ gives the invariant basis for $V^{r,s}$ up to a sign. Since the elements in $G(\mathfrak B_{r,s})$ either commute or anti-commute with elements in $\mathcal S_v$, we can describe the construction of an invariant basis for a minimal admissible module $V^{r,s}$.

\begin{theorem}\label{basis expression}
Let $v\in V^{r,s}$ be a unit vector from Proposition~\ref{mathcal S and  Sv}. There is a set 
$\Sigma\subset G(\mathfrak B_{r,s})$ such that the family 
$\{J_{\sigma}v\}_{\sigma\in \Sigma}$ is an invariant basis of $V^{r,s}$.
\end{theorem}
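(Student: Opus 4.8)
The plan is to realize the invariant basis furnished by Corollary~\ref{orbit of non-null vector} as an explicit orbit $\{J_\sigma v\}_{\sigma\in\Sigma}$, where $\Sigma$ is a system of coset representatives for the sign-extended isotropy group. First I would pass from $\mathcal S_v$ to the subgroup $\widehat{\mathcal S_v}=\mathcal S_v\cup(-\mathcal S_v)$ of Definition~\ref{def:hat-S}. Since $-{\mathbf 1}$ is central in $G(B_{r,s})$ and $J_{-\sigma}v=-J_\sigma v$, the orbit $O_v$ is stable under $w\mapsto -w$ and splits into antipodal pairs $\{w,-w\}$; the stabiliser of the pair $\{v,-v\}$ under the action $\sigma\cdot w=J_\sigma w$ is exactly $\widehat{\mathcal S_v}$. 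I would then fix a set $\Sigma\subset G(B_{r,s})$ of representatives of the left cosets of $\widehat{\mathcal S_v}$, so that $\sigma\mapsto J_\sigma v$ selects precisely one vector from each antipodal pair.

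Next I would settle the cardinality and the basis property. By Notation~\ref{def:ell(r,s)} and \eqref{max condition} one has $\#[\widehat{\mathcal S_v}]=2\cdot 2^{\ell(r,s)}$, hence $\#[\Sigma]=2^{r+s}/2^{\ell(r,s)}=\dim(V^{r,s})$. The vectors $\{J_\sigma v\}_{\sigma\in\Sigma}$ are pairwise non-parallel: if $J_\sigma v=\pm J_{\sigma'}v$, then $J_{\sigma^{-1}\sigma'}v=\pm v$, forcing $\sigma^{-1}\sigma'\in\widehat{\mathcal S_v}$ and thus $\sigma=\sigma'$ in $\Sigma$. Since $O_v$ spans $V^{r,s}$ and has exactly $\dim(V^{r,s})$ distinct directions, these $\dim(V^{r,s})$ mutually non-parallel orbit vectors are linearly independent and so form a basis, which up to the signs of its members is the invariant basis produced by Corollary~\ref{orbit of non-null vector}.

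The invariance under $G(B_{r,s})$ I would check directly from the coset structure. For $z_j\in B_{r,s}$ and $\sigma\in\Sigma$, write $z_j\sigma=\sigma'\hat s$ with $\sigma'\in\Sigma$ and $\hat s\in\widehat{\mathcal S_v}$; then
\[
J_{z_j}J_\sigma v=J_{z_j\sigma}v=J_{\sigma'}J_{\hat s}v=\pm J_{\sigma'}v,
\]
because $J_{\hat s}v=\pm v$ for $\hat s\in\widehat{\mathcal S_v}$. Thus each $J_{z_j}$ permutes $\{J_\sigma v\}_{\sigma\in\Sigma}$ up to sign, which is exactly the requirement of Definition~\ref{invariant basis}.

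The main obstacle is orthonormality. The diagonal entries satisfy $\la J_\sigma v,J_\sigma v\ra_{V^{r,s}}=\pm\la v,v\ra_{V^{r,s}}=\pm 1$ by \eqref{eq:isometry}. For the off-diagonal entries I would use the skew-symmetry $J_z^\tau=-J_z$ coming from \eqref{eq:admissible module} to reduce $\la J_\sigma v,J_{\sigma'}v\ra_{V^{r,s}}$ to $\pm\la v,J_\rho v\ra_{V^{r,s}}$ with $\rho=\sigma^{-1}\sigma'\notin\widehat{\mathcal S_v}$. This term vanishes automatically in two cases: when $J_\rho$ is skew-symmetric (which occurs precisely when $\rho$ is a product of $1$ or $2\,\mathrm{mod}\,4$ basis vectors), and when $\rho$ anticommutes with some generator $p_k$ of $\mathcal S_v$, since then $J_\rho v$ lies in $E^{-1}(p_k)$, orthogonal to $v\in E=\bigcap_k E^{+1}(p_k)$. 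The residual terms, where $\rho$ is symmetric and commutes with all of $\mathcal S_v$, are the genuine difficulty: a symmetric positive involution commuting with $\mathcal S_v$ and lying outside $\widehat{\mathcal S_v}$ would enlarge $\mathcal S_v$, contradicting its maximality $\mathcal S_v\in\mathbb{S}^{M}_{r,s}$, so the only survivors are non-involutive symmetric elements (with $\rho^2=-{\mathbf 1}$), and these are annihilated by refining the choice of the non-null vector $v\in E$ exactly as in~\cite[Lemmas 2.8, 2.9]{FurMar14} and in Example~B. This refined selection of $v$ is the crux; everything else is bookkeeping on cosets.
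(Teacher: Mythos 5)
Your proposal is correct and reaches the same basis as the paper, but the bookkeeping is organized differently. The paper's proof factors the coset decomposition through the centralizer: it picks representatives $\{\sigma_i\}$ of $\mathbf{C}_{G(B_{r,s})}(\mathcal S(PI_{r,s}))/\widehat{\mathcal S(PI_{r,s})}$ to get an orthonormal basis of the joint eigenspace $E$, and then representatives $\{\tau_j\}$ of $G(B_{r,s})/\mathbf{C}_{G(B_{r,s})}(\mathcal S(PI_{r,s}))$ to translate that basis to the other joint eigenspaces $E^{\varepsilon_1,\ldots,\varepsilon_{\ell(r,s)}}$; your $\Sigma$ is the single quotient $G(B_{r,s})/\widehat{\mathcal S_v}$, which has the same cardinality $\dim(V^{r,s})$ but forgets the two-level structure $\Sigma=\{\tau_j\sigma_i\}$ that the paper reuses later (Proposition~\ref{prop:vector_v}, Theorem~\ref{th:From group}). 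What your route buys is a more explicit verification of orthonormality, which the paper essentially asserts by saying suitable representatives ``can be found'': your reduction of $\la J_\sigma v,J_{\sigma'}v\ra_{V^{r,s}}$ to $\pm\la v,J_\rho v\ra_{V^{r,s}}$ with $\rho=\sigma^{-1}\sigma'\notin\widehat{\mathcal S_v}$, and the trichotomy (skew $J_\rho$; $\rho$ anticommuting with some $p_k$, so that $J_\rho v\in E^{-1}(p_k)\perp E$; symmetric $\rho$ centralizing $\mathcal S_v$) is a genuine improvement in transparency. One detail you should record to close the maximality step: if $J_\rho$ is symmetric and $\rho^2={\mathbf 1}$, a parity computation on $|\mathfrak b(\rho)|$ and $|\mathfrak b^-(\rho)|$ shows $\rho$ automatically lies in $\Pin(r,0)\times\Spin(0,s)$, so it really is a positive involution and does contradict $\mathcal S_v\in\mathbb S^M_{r,s}$. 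The residual case ($J_\rho$ symmetric, $\rho^2=-{\mathbf 1}$, $\rho$ in the centralizer) is exactly the obstruction of Example~B, and both you and the paper dispose of it the same way, by the choice of $v$ via \cite[Lemmas 2.8, 2.9]{FurMar14}; so there is no gap, only a deferral that the paper itself makes.
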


\begin{proof}
Let $\mathcal S_v\in\mathbb{S}_{r,s}^M$. We fix a maximal set $PI_{r,s}=\{p_i\}_{i=1}^{\ell(r,s)}$ such that $\mathcal S(PI_{r,s})=\mathcal S_v$ and write 
$E^{\varepsilon_i}(p_i)=\{v\in V^{r,s}\mid J_{p_i}v= \varepsilon_i v\}$, where $\varepsilon_i$ is either $+1$ or $-1$. We denote $\varepsilon=(\varepsilon_1,\ldots, \varepsilon_{\ell(r,s)})$ and define
\begin{equation}\label{eq:epsilon}
E=\bigcap_{i=1}^{\ell(r,s)}\, E^{+1}(p_i),\quad E^{\varepsilon_1,\ldots, \varepsilon_{\ell(r,s)}}=\bigcap_{i=1}^{\ell(r,s)}\, E^{\varepsilon_i}(p_i).
\end{equation}
Before we continue the proof we note that $\dim(E)\in\{1,2,4,8\}$, and either $\dim(V^{r,s})=\dim(E)\times 2^{\ell(r,s)}$ or
$\dim(V^{r,s})=\dim(E)\times 2^{\ell(r,s)-1}$. In the latter case, one involution $J_{p_i}$ acts as $\Id$ or $-\Id$ on $V^{r,s}$, which happens if $r-s=3\mod 4$, see details in~\cite{FurMar21}. Thus 
$$
\dim(E)=2^{r+s-2\ell(r,s)}\quad\text{or}\quad \dim(E)=2^{r+s-2\ell(r,s)+1}.
$$

Let ${\mathbf C}_{G(\mathfrak B_{r,s})}(\mathcal{S}(PI_{r,s}))$ be the centralizer
of $\mathcal{S}(PI_{r,s})$ in $G(\mathfrak B_{r,s})$ and   
$v\in E$ a unit vector. Then 
we can find representatives 
$\{\sigma_{i}\}_{i=1}^{\dim(E)-1}\in {\mathbf C}_{G(\mathfrak B_{r,s})}\big(\mathcal{S}(PI_{r,s})\big)\big/ \widehat{\mathcal{S}(PI_{r,s})}$, 
and 
$\{\tau_{j}\}_{j=1}^{2^{\ell(r,s)}}\in G(\mathfrak B_{r,s})\big/{\mathbf C}_{G(\mathfrak B_{r,s})}\big(\mathcal{S}(PI_{r,s})\big)$ such that
\begin{equation*}\label{eq:Sigma}
\begin{split}
&\text{the vectors}
\quad \{J_{\sigma_i}v\}_{i=1}^{\dim(E)-1}\quad \text{form an orthonormal basis for $E$},
\\
&\text{the vectors}\quad \{J_{\tau_j}J_{\sigma_i}v\}_{i=1}^{\dim(E)-1}~_{j=1}^{2^{\ell(r,s)}}\quad\text{
form an orthonormal basis for $V^{r,s}$.}
\end{split}
\end{equation*}
These $\{\sigma_{i}\}_{i=1}^{\dim(E)-1}$ and $\{\tau_{j}\}_{j=1}^{2^{\ell(r,s)}}$ form the set $\Sigma$.
\end{proof}

\begin{theorem}\label{prop:vector_v}
Fix the group $\mathcal{S}(PI_{r,s})$ and the representatives 
$$
\{\sigma_{i}\}_{i=1}^{\dim(E)-1}\in {\mathbf C}_{G(\mathfrak B_{r,s})}\big(\mathcal{S}(PI_{r,s})\big)\big/\widehat{\mathcal{S}(PI_{r,s})},
$$
$$
\{\tau_j\}_{j=1}^{2^{\ell(r,s)}}\in G(\mathfrak B_{r,s})\big/{\mathbf C}_{G(\mathfrak B_{r,s})}\big(\mathcal{S}(PI_{r,s})\big). 
$$
Assume that $v_1,v_2\in E$ generate 
two sets of invariant bases
\[
\mathfrak B_{v_k}(V^{r,s})=\{v_k,\  J_{\sigma_i}v_k,\ J_{\tau_{j}}v_k,\ J_{\tau_j}J_{\sigma_{i}}v_k\}_{i=1}^{\dim(E)-1}\,_{j=1}^{2^{\ell(r,s)}},\quad k=1,2,
\]
as in Theorem~\ref{basis expression}. Then the invariant integral structures 
\begin{equation}\label{eq:inv structures v1v2}
\begin{array}{lll}
&\spn_{\mathbb Z}\{\mathfrak B_{v_1}(V^{r,s})\}\oplus \spn_{\mathbb Z}\{\mathfrak B_{r,s}\}
\\
\\
&\spn_{\mathbb Z}\{\mathfrak B_{v_2}(V^{r,s})\}\oplus \spn_{\mathbb Z}\{\mathfrak B_{r,s}\}
\end{array}
\end{equation}
are isomorphic. 
\end{theorem}

\begin{proof}
We define the correspondence $A\colon \mathfrak B_{v_1}(V^{r,s})\to \mathfrak B_{v_2}(V^{r,s})$ by 
\begin{equation}\label{eq:correspondence}
\begin{array}{lllll}
&v_1\mapsto v_2,\quad 
&J_{\sigma_i}v_1\mapsto J_{\sigma_i}v_2,
\\
&J_{\tau_j}v_1\mapsto J_{\tau_j}v_2,\quad
&J_{\tau_{j}}J_{\sigma_i}v_1\mapsto J_{\tau_{j}}J_{\sigma_i}v_2,
\end{array}
\end{equation}
and extend it by linearity over $\mathbb Z$. We claim that the map $A\oplus \Id$ is an isomorphism of invariant integral structures~\eqref{eq:inv structures v1v2}. To show this, we denote the basis vectors from $\mathfrak B_{v_1}(V^{r,s})$ by $\{u_{\alpha}\}_{\alpha=1}^{\dim(V^{r,s})}$ and the basis vectors from $\mathfrak B_{v_2}(V^{r,s})$ by $\{w_{\alpha}\}_{\alpha=1}^{\dim(V^{r,s})}$, where $w_{\alpha}=Au_{\alpha}$. The invariance property of the bases $\mathfrak B_{v_1}(V^{r,s})$ and $\mathfrak B_{v_2}(V^{r,s})$ implies that for any $u_{\alpha}\in \mathfrak B_{v_1}(V^{r,s})$ and any  $z_k\in \mathfrak B_{r,s}$ there is $u_{\beta}\in \mathfrak B_{v_1}(V^{r,s})$ such that 
\begin{equation}\label{eq:Jz}
J_{z_k}u_{\alpha}=\pm u_{\beta}=\pm J_{\varkappa}v_1,\quad\text{for some}\quad
\varkappa\in\Sigma=\{\sigma_i,\tau_j,\tau_j\sigma_i\}.
\end{equation} 
The correspondence~\eqref{eq:correspondence} and~\eqref{eq:Jz} imply that for chosen $u_{\alpha}\in \mathfrak B_{v_1}(V^{r,s})$ and $z_k\in \mathfrak B_{r,s}$ we have 
$$
J_{z_k}Au_{\alpha}=J_{z_k}w_{\alpha}=\pm w_{\beta}=
\pm J_{\varkappa}v_2=
\pm AJ_{\varkappa}v_1=AJ_{z_k}u_{\alpha}.
$$
Note also that $A^{\tau}A=\Id_{V^{r,s}}$ since it maps an othonormal basis to an orthonormal basis. Then we have 
\begin{eqnarray}\label{eq:prop316}
\langle [Au_{\alpha},Au_{\beta}],z_k\rangle_{r,s} &=&
\langle J_{z_k}Au_{\alpha},Au_{\beta}\rangle_{V^{r,s}}
=
\langle AJ_{z_k}u_{\alpha},Au_{\beta}\rangle_{V^{r,s}}\nonumber
\\
&=&
\langle A^{\tau}AJ_{z_k}u_{\alpha},u_{\beta}\rangle_{V^{r,s}}=
\langle J_{z_k}u_{\alpha},u_{\beta}\rangle_{V^{r,s}}
\\
&=&
\langle [u_{\alpha},u_{\beta}],z_k\rangle_{r,s}.\nonumber
\end{eqnarray}
\end{proof}


\subsection{Equivalence of groups $\mathcal S$}\label{sec:equivalence_small}


We define an equivalence relation between groups $\mathcal S\subset \mathbb S_{r,s}$ that will descend to the equivalence of their generating sets $PI_{r,s}$. We also introduce signatures to distinguish sets $PI_{r,s}$ for a fixed value $(r,s)$. Different signatures will lead to non-equivalent generating sets $PI_{r,s}$ and groups $\mathcal S=\mathcal S(PI_{r,s})$. Our aim is to show that equivalent groups $\mathcal S\subset \mathbb S_{r,s}$ lead to the isomorphic invariant integral structures on the Lie algebras $\mathfrak n_{r,s}$.

We recall Notation~\ref{set of components} and extend it to the sets $PI$.
\begin{notation}\label{b+ and b- of PI}
Let $PI\in \mathbb{PI}_{r,s}$. We denote
\begin{align*}
&\mathfrak{b}^{+}(PI)=\{z_{i}\mid\ z_{i}\ \text{is a positive vector in some $p\in PI$}\},
\\
&\mathfrak{b}^{-}(PI)=\{z_{i}\mid\ z_{i}\ \text{is a negative vector in some $p\in PI$}\}.
\end{align*}
We set also $|\mathfrak{b}^{+}(PI)|$, $|\mathfrak{b}^{-}(PI)|$ for the cardinality of the respective set, and $|\mathfrak{b}(PI)|=|\mathfrak{b}^{+}(PI)|+|\mathfrak{b}^{-}(PI)|$.
\end{notation}

\begin{definition}\label{not:(T12)}
A set $PI$ consisting only of the involutions of type $T_1$ will be called
$(T1)$-type set. A set $PI$ consisting of the involutions of type $T_1$ and having at least one involution of type $T_2$ will be called
$(T2)$-type set. 
\end{definition}

\begin{proposition}
Any $(T2)$-type set of positive involutions, which contains several type $T_2$ involutions can be changed to a $(T2)$-type set containing at most one involution of type $T_2$ and the rest of involutions will be of type $T_1$. 
\end{proposition}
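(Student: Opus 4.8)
The plan is to show that if a $(T2)$-type set $PI$ contains two involutions of type $T_2$, say $q_1$ and $q_2$, then we may replace one of them by a type $T_1$ involution without changing the generated group $\mathcal S(PI)$, and then iterate. The key structural fact I would invoke is Proposition~\ref{prop:product}(C): since $q_1$ and $q_2$ are commuting type $T_2$ involutions in $PI$, the number $|\mathfrak b(q_1)\cap \mathfrak b(q_2)|$ is odd, and their product $p:=q_1q_2$ is an involution of type $T_1$. The idea is that the pair $\{q_1,q_2\}$ carries the same information as the pair $\{q_1, q_1q_2\}=\{q_1,p\}$, in which exactly one generator is of type $T_2$ and the other is of type $T_1$.

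The main step is to verify that the replacement
\[
PI=\{q_1,q_2,p_3,\ldots,p_\ell\}\quad\longmapsto\quad PI'=\{q_1,\,q_1q_2,\,p_3,\ldots,p_\ell\}
\]
is legitimate, i.e.\ that $PI'$ again satisfies Definition~\ref{def:PI} and that $\mathcal S(PI')=\mathcal S(PI)$. For the group equality, I would note that $q_1q_2\in\mathcal S(PI)$ and $q_2=q_1(q_1q_2)\in\mathcal S(PI')$, so each generating set lies in the group generated by the other; hence they generate the same subgroup of $G(B_{r,s})$. For the validity of $PI'$, commutativity of the new generators follows from Proposition~\ref{prop:product} together with the commutativity of the original generators, and the linear independence condition $(PI2)$ is most easily checked in its equivalent form $(PI2)'$: no product of a subfamily of $PI'$ equals $\pm\mathbf 1$. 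Since $\mathcal S(PI')=\mathcal S(PI)$ and $PI$ already satisfies $(PI2)'$, the group $\mathcal S(PI')$ has order $2^{\ell}$; because $PI'$ has $\ell$ elements generating this group, the $2^\ell$ products in~\eqref{all products of p} for $PI'$ must be distinct, forcing none of them to equal $\pm\mathbf 1$, which is exactly $(PI2)'$.

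After this single reduction, the set $PI'$ contains one fewer involution of type $T_2$ than $PI$, while its cardinality and its generated group are unchanged. Repeating the argument, each application strictly decreases the number of type $T_2$ involutions by one, so after finitely many steps we arrive at a set generating the same group $\mathcal S$ and containing at most one involution of type $T_2$, all remaining involutions being of type $T_1$. By Definition~\ref{not:(T12)} the resulting set is still $(T2)$-type, which proves the claim.

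I expect the main obstacle to be the bookkeeping in verifying condition $(PI2)$ (equivalently $(PI2)'$) for the modified set $PI'$, since one must make sure that forming products with $q_1q_2$ in place of $q_2$ does not accidentally produce $\pm\mathbf 1$. The cleanest way around this is precisely the order-counting argument above: rather than checking products directly, I would use $\mathcal S(PI')=\mathcal S(PI)$ and the fact established in Proposition~\ref{S to PI to S}(1) that $\#[\mathcal S(PI)]=2^{\#[PI]}$ for any admissible $PI$, so that the equality of orders automatically forces the $2^\ell$ products of $PI'$ to be pairwise distinct and hence linearly independent.
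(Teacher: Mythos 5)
Your proposal is correct and follows essentially the same route as the paper: the paper's proof simply invokes Proposition~\ref{prop:product}, and your argument is the explicit version of that reduction, replacing a pair of commuting type $T_2$ involutions $\{q_1,q_2\}$ by $\{q_1,q_1q_2\}$ with $q_1q_2$ of type $T_1$ by part (C), and iterating. Your order-counting verification of $(PI2)'$ for the modified set (together with the observation that $-\mathbf 1\notin\mathcal S(PI)$ rules out products equal to $-\mathbf 1$) is a clean way to handle the bookkeeping the paper leaves implicit.
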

\begin{proof}
The proof follows directly from Proposition~\ref{prop:product}.
\end{proof}

\begin{notation}
If $C\in \Orth(r,s)$, then
we denote by the same letter $C$ its natural extension $C\colon \Cl_{\,r,s}^{\,*}
\to \Cl_{\,r,s}^{\,*}$ to the action on the group of invertible elements 
$\Cl_{\,r,s}^{\,*}\subset \Cl_{\,r,s}$.
\end{notation}

Let $\mathfrak B_{r,s}$ be a basis as in~\eqref{eq:brs}. Let $C\in \Orth(r,s)$. 
If $\sigma=z_{i_1}\cdots z_{i_k}\in G(\mathfrak B_{r,s})$, then $C$ 
defines an element $C(\sigma):=C(z_{i_1})\cdots C(z_{i_k})\in G(\mathfrak B_{r,s})$.
Since $C$ maps positive basis vectors to positive vectors 
and negative basis vectors to negative basis vectors, 
it induces a map $C\colon\mathbb{PI}_{r,s}\to \mathbb{PI}_{r,s}$. For the particular case $(r,r)$ the map $C$ can be chosen to map positive basis vectors to negative vectors and vice versa. The changes for $(r,r)$ will be discussed separately
in a forthcoming paper.

\begin{definition}\label{def:group equiv} 
We say that the groups $\mathcal S_1$ and $\mathcal S_2$ are equivalent,
writing $\mathcal S_1\sim\mathcal S_2$, 
if  
there is a map $C\in \Orth(r,s)$ such that its natural extention to
$\Cl_{\,r,s}^{*}\subset \Cl_{\,r,s}$
gives the isomorphism between the extended groups
$\widehat{\mathcal{S}_1}$ and 
$\widehat{\mathcal{S}_2}$; that is $C(\widehat{\mathcal{S}_1})=\widehat{\mathcal{S}_2}$.
\end{definition}

\begin{definition}\label{equiv relation} 
Let $PI_1$ and $PI_2$ be two sets of involutions. 
Then we say that $PI_1$ and $PI_2$ are equivalent,
writing $PI_1\sim PI_2$, 
if  
$\mathcal S(PI_1)$ is equivalent to $\mathcal S(PI_2)$ in the sense of Definition~\ref{def:group equiv}.
\end{definition}

\begin{example}
Recall Example~\ref{example 1} and consider $G(\mathfrak B_{4,0})$. Then $PI_1=\{z_1z_2z_3\}$ and $PI_2=\{z_1z_2z_4\}$ are equivalent, nevertheless $PI_1$ is not equivalent to $PI_5=\{z_1z_2z_3z_4\}$.
\end{example}


\subsection{Standard subgroups}\label{ex:standard group}

In this section we present a construction of a sequence of subgroups that will be important in Section~\ref{sec:nonisomorphic subgroups}. We call these subgroups {\it standard}. Let $\mathfrak B_{r,s}$ be an orthonormal basis of $\mathbb R^{r,s}$ as in~\eqref{eq:brs}. We form a set of mutually different pairs
\begin{equation}\label{eq:pi}
\pi_{i,j}=z_{i}z_{j},\ \ i<j,\ \ i,j\in 
\begin{cases}
\{1,\ldots,r\}&\quad\text{if\ $r$\ is even}
\\
\{1,\ldots,r-1\}&\quad\text{if\ $r$\ is odd}
\end{cases},
\end{equation}
\begin{equation}\label{eq:nu}
\nu_{k,l}=z_{k}z_{l},\ \ k<l,\ \ k,l\in 
\begin{cases}
\{r+1,\ldots,s\}&\quad\text{if\ $s$\ is even}
\\
\{r+1,\ldots,s-1\}&\quad\text{if\ $s$\ is odd}
\end{cases},
\end{equation}
and
$$
\mathfrak b(\pi_{i_1,j_1})\cap\mathfrak b(\pi_{i_2,j_2})=\emptyset,
\quad
\mathfrak b(\nu_{k_1,l_1})\cap\mathfrak b(\nu_{k_2,l_2})=\emptyset.
$$
The cardinalities of the sets of pairs are
$$
{\bf p}=\#\{\pi_{i,j}\}=\begin{cases}
\frac{r}{2}&\quad\text{if\ $r$\ is even}
\\
\frac{r-1}{2}&\quad\text{if\ $r$\ is odd}
\end{cases},
\quad
{\bf n}=\#\{\nu_{kl}\}=\begin{cases}
\frac{s}{2}&\quad\text{if\ $s$\ is even}
\\
\frac{s-1}{2}&\quad\text{if\ $s$\ is odd}
\end{cases}.
$$
Now we form a set of involutions of type $T_1$, which from now on will be denoted always by $p_i$. For any positive integers $\bar p\in \{1,\ldots,{\bf p}\}$  and $\bar n\in \{1,\ldots,{\bf n}\}$ we make a product of pairs:
\begin{equation}\label{eq:pinu}
\pi_{i_\alpha, j_{\alpha}}\pi_{i_\beta, j_{\beta}},\quad
\pi_{i_\alpha, j_{\alpha}}\nu_{k_\gamma, l_{\gamma}},\quad
\nu_{k_\gamma, l_{\gamma}}\nu_{k_\delta, l_{\delta}},\quad
\alpha,\beta\in\{1,\ldots,{\bar p}\},\ \gamma,\delta\in\{1,\ldots,{\bar n}\}.
\end{equation}
We denote by $\mathcal S^{ \bar p,\bar n}$ the group generated by involutions~\eqref{eq:pinu}.

\begin{proposition}\label{prop:standard group}
In the notation above the groups $\mathcal S^{\bar p,\bar n}$ have the following properties.
\begin{itemize} 
\item[(i)] $\mathcal S^{\bar p,\bar n}$ is a subgroup of $G(\mathfrak B_{r,s})$ for any $\bar p\in \{1,\ldots,{\bf p}\}$  and $\bar n\in \{1,\ldots,{\bf n}\}$;
\item[(ii)] $\mathcal S^{\bar p-k_1,\bar n-k_2}$ is a subgroup of $\mathcal S^{\bar p,\bar n}$ for any $k_1=0,1,\ldots, \bar p$ and $k_2=0,1,\ldots, \bar n$;
\item[(ii)] The standard groups $\mathcal S^{\bar p,\bar n}$ are equivalent for fixed $(\bar p,\bar n)$ in the sense of Definition~\ref{def:group equiv};
\item[(iv)] Any set $PI_{r,s}$ satisfying Definition~\ref{def:PI} and such that $\mathcal S^{\bf p,\bf n}=\mathcal S(PI_{r,s})$ will be equivalent in the sense of Definition~\ref{equiv relation};
\item[(v)] Pairs $\pi_{i,j}$ and $\nu_{k,l}$ commute with all elements in $\mathcal S^{\bf p,\bf n}$;
\item[(vi)] Let $\theta=z_{i_1}\cdots z_{i_{\bf p+\bf n}}$ be a product such that each $z_{i_t}$, $t=1,\ldots,\bf p+\bf n$ belongs only to one pair from~\eqref{eq:pi} or~\eqref{eq:nu}. Then $\theta$ commutes with all elements in $\mathcal S^{\bf p,\bf n}$.
\end{itemize}
\end{proposition}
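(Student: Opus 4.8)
The plan is to reduce every item to two structural facts: first, that the pairs $\pi_{i,j}$ and $\nu_{k,l}$ entering the construction have mutually disjoint supports; and second, the elementary commutation rule in $G(B_{r,s})$ that, for two elements $\sigma,\tau$ written as products of distinct orthogonal basis vectors with $m=|\mathfrak b(\sigma)|$, $n=|\mathfrak b(\tau)|$ and $c=|\mathfrak b(\sigma)\cap\mathfrak b(\tau)|$, one has $\sigma\tau=(-1)^{mn-c}\tau\sigma$. This identity is an immediate consequence of $z_az_b=-z_bz_a$ for orthogonal $a\neq b$, the relation used in the proof of Proposition~\ref{prop:product}; in particular $\sigma$ and $\tau$ commute precisely when $mn-c$ is even. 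I would record at the outset that each generator in~\eqref{eq:pinu} is a product of two disjoint pairs, hence a type $T_1$ involution.

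For (i) I would argue that any two generators $Q_aQ_b$ and $Q_cQ_d$ (each a product of two of the chosen pairs) share a whole number of pairs, so their common support has even cardinality; by Proposition~\ref{prop:product}(A) they therefore commute and their product is again an involution of type $T_1$ (or $\pm\mathbf 1$). Consequently the generators are mutually commuting involutions, and the set of all their products is a finite abelian subset of $G(B_{r,s})$ containing $\mathbf 1$ and closed under multiplication, i.e. a subgroup. Items (ii)--(iv) are then immediate: decreasing $\bar p$ or $\bar n$ simply restricts the construction to a subcollection of the pairs, so the generating set of the smaller standard group is literally a subset of that of the larger one, forcing the claimed inclusions of subgroups.

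The technical core is (v), and I expect it to be the main obstacle. Given two standard groups built from the same $(\bar p,\bar n)$ but from different admissible pairings, I would construct a signed re-ordering $C\in\Orth(r,s)$ that sends positive basis vectors to positive and negative to negative, carrying the $a$-th pair of the first construction onto the $a$-th pair of the second (and mapping the unused basis vectors to unused ones when $r$ or $s$ is odd). The delicate point is to check that the natural extension of $C$ to $\Cl_{r,s}^{*}$ intertwines the two extended groups: since $C$ maps pairs to pairs, it sends each generator $Q_aQ_b$ to $\pm Q'_aQ'_b$, a generator of the second group up to sign, so $C(\widehat{\mathcal S_1})=\widehat{\mathcal S_2}$, which is exactly equivalence in the sense of Definition~\ref{def:group equiv}. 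Item (vi) then follows by combining (v) with Definition~\ref{equiv relation}: equivalence of sets $PI$ is defined through equivalence of the groups they generate, and any $PI_{r,s}$ with $\mathcal S(PI_{r,s})=\mathcal S^{\mathbf p,\mathbf n}$ produces a group coinciding with (or, across different standard constructions, equivalent to) the standard one, whence all such $PI_{r,s}$ are mutually equivalent.

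Finally, (vii) and (viii) are parity computations that I would reduce to commutation with a single generator $Q_aQ_b$, since commuting with every generator forces commuting with the whole group. For a pair $\pi_{i,j}$ or $\nu_{k,l}$ the common support with $Q_aQ_b$ is $0$ when the pair is disjoint from both factors and $2$ when it equals one of them, hence always even; for $\theta=z_{i_1}\cdots z_{i_{\mathbf p+\mathbf n}}$, which contains exactly one vector from each pair, the common support with $Q_aQ_b$ is exactly $2$. In every case $c$ is even while $mn$ is even, so $mn-c$ is even and the commutation rule gives commutativity. The only genuine difficulty lies in (v)/(vi); the remaining items are direct consequences of disjointness, the subset structure of the generating sets, and the sign rule above.
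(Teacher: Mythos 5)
Your proof is correct and follows essentially the same route as the paper, which simply declares (i)--(iv) obvious, derives (v)--(vi) from the fact that the pairs are determined up to a signed permutation of the basis, and gets (vii)--(viii) from the evenness of the common support together with the evenness of the supports of the group elements. You merely make explicit the sign rule $\sigma\tau=(-1)^{mn-c}\tau\sigma$ and the construction of the re-ordering $C$, which the paper leaves implicit.
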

\begin{proof}
Properties (i)-(ii) are obvious. Statements (iii) and (iv) follows from the fact the pairs can be chosen up to a sign permutation of the basis in $\mathbb R^{r,s}$. Properties (v) and (vi) are the consequence of the facts that pairs $\pi_{i,j}$,  $\nu_{k,l}$, and the product $\theta$ will have even number of common elements and that the number of vectors $z_i$ in any element of the group $\mathcal S^{\bf p,\bf n}\subset G(\mathfrak B_{r,s})$ is also even.
\end{proof}

\begin{example}
Consider $\mathbb R^{6,3}$ with the basis $\mathfrak  B_{6,3}=\{z_1,\ldots,z_9\}$. The first six elements of the basis are positive and the last three are negative. We can choose the pairs 
\begin{equation}\label{eq;6-3}
\pi_{1,2}=z_1z_2,\quad
\pi_{3,4}=z_3z_4,\quad
\pi_{5,6}=z_5z_6,\quad
\nu_{7,8}=z_7z_8,
\end{equation}
up to the sign permutation. They generate a group $\mathcal S^{{\bf 3},{\bf 1}}\subset G(\mathfrak B_{6,3})$ of cardinality $\#\mathcal S^{{\bf 3},{\bf 1}}=8$. A possible choice of $(T1)$-type set of involutions $PI$ generating $\mathcal S^{{\bf 3},{\bf 1}}$ is
\begin{equation}\label{eq;p6-3}
PI_{6,3}=\{p_1=\pi_{1,2}\pi_{3,4},\quad p_2=\pi_{1,2}\pi_{5,6},\quad p_3=\pi_{1,2}\pi_{7,8}\}.
\end{equation}
Any pair from~\eqref{eq;6-3} will commute with involutions in~\eqref{eq;p6-3} and therefore with all elements in the group $\mathcal S^{{\bf 3},{\bf 1}}\subset G(\mathfrak B_{6,3})$. Furthermore, 
$\theta=z_1z_3z_5z_7$, which is chosen up to a sign permutation, commutes with elements in the group $\mathcal S^{{\bf 3},{\bf 1}}\subset G(\mathfrak B_{6,3})$ as well.  The pairs
$$
\pi_{1,2},\quad \pi_{3,4},\quad \pi_{5,6}\quad \text{generates the subgroup $\mathcal S^{{\bf 3},{\bf 0}}\subset \mathcal S^{{\bf 3},{\bf 1}}$}.
$$
Likewise the pairs
$$
\pi_{1,2},\quad \pi_{3,4},\quad \pi_{7,8}\quad \text{generates the subgroup $\mathcal S^{{\bf 2},{\bf 1}}\subset \mathcal S^{{\bf 3},{\bf 1}}$}.
$$
The groups $\mathcal S^{{\bf 3},{\bf 0}}$ and $\mathcal S^{{\bf 2},{\bf 1}}$ are not equivalent. They are respective representatives in their classes of equivalence. 
\end{example}


\subsection{Connectivity of groups $\mathcal S$}


Here we introduce another invariant which describes equivalent subgroups $\mathcal S\subset \mathbb S_{r,s}$. We call it ``connectedness'' for $\mathcal S=\mathcal S(PI_{r,s})$. We use the notation $\mathfrak{b}(\mathcal S)$ to denote the set of orthonormal basis vectors from $\mathfrak B_{r,s}$ which generates the group $\mathcal S$, compare with Notations~\eqref{set of components} and~\eqref{b+ and b- of PI}. 

\begin{definition}\label{S is connected}
A group $\mathcal S\in\mathbb{S}_{r,s}$ is called connected if there is no two subgroups $\mathcal S_{(1)},\mathcal S_{(2)}\subset \mathcal S$, such that 
\begin{equation*}
\mathcal S\quad\text{is isomorphic to}\quad \mathcal S_{(1)}\times \mathcal S_{(2)}\quad\text{with}\quad \mathfrak{b}(\mathcal S_{(1)})\cap\mathfrak{b}(\mathcal S_{(2)})=\emptyset
\end{equation*} 
We write in this case $\pi_0(\mathcal S)=1$.

If a group $\mathcal S\in\mathbb{S}_{r,s}$ admits the decomposition into  subgroups
$\mathcal S=\mathcal S_{(1)}\times \ldots\times \mathcal S_{(k)}$ with $\pi_0(\mathcal S_{(i)})=1$ and $\mathfrak{b}(\mathcal S_{(i)})\cap\mathfrak{b}(\mathcal S_{(j)})=\emptyset$ for any $i\not=j$,
then we say that $\mathcal S$ is disconnected with $k$ connected components and we write $\pi_0(\mathcal S)=k$. 
\end{definition}

\begin{lemma}\label{lem:connectedness}
Let $PI=\{p_i\}_{i=1}^{\ell(r,s)}\in\mathbb{PI}^{M}_{r,s}$, and 
$|\mathfrak{b}(PI)|=r+s$. Assume that there is $z_{\alpha}\in \mathfrak B_{r,s}$ such that $z_{\alpha}\in\bigcap_{i=1}^{\ell(r,s)} \mathfrak{b}(p_i)$, and moreover, there is 
no $\sigma\in\mathcal{S}(PI)$ such that $\mathfrak{b}(\sigma)\subset \mathfrak{b}(p_i)$ for any $p_i\in PI$. Then $\pi_0(\mathcal{S}(PI))=1$.
\end{lemma}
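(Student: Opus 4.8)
The plan is to linearize everything over the two-element field $\mathbb F_2$. First I would observe that the assignment $\sigma\mapsto\mathfrak{b}(\sigma)$ of Notation~\ref{set of components}, read as a vector in $\mathbb F_2^{\,r+s}$, is a group homomorphism from $(\mathcal S(PI),\cdot)$ to $(\mathbb F_2^{\,r+s},+)$: by the Clifford relation $z_jz_j=\pm\mathbf 1$ the support of a product is the symmetric difference of the two supports, which is exactly addition in $\mathbb F_2^{\,r+s}$. Condition $(S1)$ of Definition~\ref{Def:S} forces this homomorphism to be injective, because its kernel consists of the elements $\pm\mathbf 1$ lying in $\mathcal S(PI)$ and $-\mathbf 1\notin\mathcal S(PI)$. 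Hence $\mathfrak{b}$ identifies $\mathcal S(PI)$ with a subspace $W\subseteq\mathbb F_2^{\,r+s}$ whose support is $\mathfrak{b}(PI)$, and the generators $p_i$ correspond to a basis of $W$ by $(PI2)'$.

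Next I would recast the connectivity of Definition~\ref{S is connected} in this language. A decomposition $\mathcal S(PI)=\mathcal S_{(1)}\times\mathcal S_{(2)}$ with $\mathfrak{b}(\mathcal S_{(1)})\cap\mathfrak{b}(\mathcal S_{(2)})$ trivial corresponds exactly to a partition $\mathfrak{b}(PI)=A\sqcup B$ of the used basis vectors into two nonempty blocks, together with a coordinatewise splitting $W=W_A\oplus W_B$ in which $W_A=W\cap\mathbb F_2^{A}$ and $W_B=W\cap\mathbb F_2^{B}$. Thus proving $\pi_0(\mathcal S(PI))=1$ is the same as showing that no such proper coordinate splitting of $W$ exists.

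I would then argue by contradiction. Suppose $W=W_A\oplus W_B$ is a nontrivial coordinate splitting. Since $z_\alpha$ lies in the support of every $p_i$, it lies in $\mathfrak{b}(PI)=A\sqcup B$, and without loss of generality $z_\alpha\in A$. For each generator write $\mathfrak{b}(p_i)=a_i\sqcup b_i$ with $a_i=\mathfrak{b}(p_i)\cap A$ and $b_i=\mathfrak{b}(p_i)\cap B$. Because $W$ splits coordinatewise, the restriction $a_i$ of $\mathfrak{b}(p_i)$ to $A$ is again the support of some $\sigma\in\mathcal S(PI)$, and $\sigma\neq\mathbf 1$ since $z_\alpha\in a_i$. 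The crucial point is that if some $b_i$ were nonempty, then $\mathfrak{b}(\sigma)=a_i$ would be a nonempty proper subset of $\mathfrak{b}(p_i)$, contradicting the hypothesis that no element of $\mathcal S(PI)$ has support strictly inside a single $\mathfrak{b}(p_i)$. Hence every $b_i$ is empty, all generators lie in $\mathbb F_2^{A}$, and therefore $W\subseteq\mathbb F_2^{A}$ with $W_B=\{0\}$, contradicting the nontriviality of the splitting. This gives $\pi_0(\mathcal S(PI))=1$.

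The step that I expect to demand the most care is the reformulation in the second paragraph: one must check that an abstract direct-product decomposition with disjoint supports really is the same datum as a coordinatewise direct-sum decomposition of $W$, and in particular that a splitting of $W$ forces each single generator support $\mathfrak{b}(p_i)$ to break as $a_i\sqcup b_i$ with both halves already realized inside $W$. It is precisely here that the hypothesis $z_\alpha\in\bigcap_i\mathfrak{b}(p_i)$, which guarantees $a_i\neq\emptyset$, combines with the ``no proper sub-support'' hypothesis to annihilate every cross term $b_i$. The assumption $|\mathfrak{b}(PI)|=r+s$ enters only in the background, ensuring that $\supp(W)$ is the full index set so that the partition $A\sqcup B$ exhausts all basis vectors.
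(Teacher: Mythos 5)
Your argument is correct, and it reaches the same contradiction as the paper but by a noticeably different route. The paper works directly with words in the generators: it splits into cases according to whether both factors of a hypothetical decomposition $\mathcal S=\mathcal S_{(1)}\times\mathcal S_{(2)}$ contain a product of an odd number of the $p_i$ (in which case $z_\alpha$ lies in both supports at once), and otherwise writes an even product $\eta=p_{i_0}\sigma\in\mathcal S_{(1)}$ with $\sigma$ an odd product, then invokes the no-proper-sub-support hypothesis to manufacture a basis vector $z_\beta\in\mathfrak b(\sigma)\cap\mathfrak b(p_{i_0}\sigma)$ common to the two supports. You instead linearize: the support map $\sigma\mapsto\mathfrak b(\sigma)$ is an injective homomorphism into $(\mathbb F_2^{\,r+s},+)$ by $(S1)$, a disconnection becomes a coordinate splitting $W=W_A\oplus W_B$, and you then kill the $B$-component of \emph{each generator} by combining $z_\alpha\in\mathfrak b(p_i)\cap A$ (so the $A$-part is a nontrivial element of $\mathcal S(PI)$) with the no-proper-sub-support hypothesis. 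Both proofs use the two hypotheses in the same essential roles and both rely on the same implicit reading of the second hypothesis (no \emph{nontrivial} $\sigma$ with support \emph{properly} contained in a single $\mathfrak b(p_i)$), but your version avoids the parity case analysis and the somewhat delicate step in the paper where one must argue that the odd product $\sigma$ lands in $\mathcal S_{(2)}$; the price is the preliminary bookkeeping identifying abstract direct-product decompositions with coordinatewise splittings of $W$, which you carry out correctly.
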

\begin{proof}

Note that any product $\prod_{j=1}^{2k+1} p_j$ of odd number contains $z_\alpha$. Let us assume that $\mathcal S=\mathcal S_{(1)}\times \mathcal S_{(2)}$ is a non-trivial decomposition.

If both subgroups include a product of odd number of involutions $\prod_{j=1}^{2l+1} p_j$, $p_j\in PI$,
then $z_{\alpha}\in\mathfrak{b}(\mathcal S_{(1)})\bigcap \mathfrak{b}(\mathcal S_{(2)})$.  Therefore $\mathcal S$ should be connected.

Assume that the subgroup $\mathcal S_{(1)}$ consists of only even products 
$\eta=\prod_{j}^{2k} p_j$ of involutions in $PI$. We write one of these products in the form  $\eta=p_{i_0}\cdot \sigma\in \mathcal{S}_{(1)}$, where $p_{i_0}$ is one of the generators from the set $PI$ and $\sigma$ is a product of odd number of some involutions in $PI$. It implies 
that $\sigma\in \mathcal{S}_{(2)}$. 
By the assumption  
$\mathfrak{b}(\sigma)\not\subset \mathfrak{b}(p_i)$ for any $p_i\in PI$, 
there exists a basis vector $z_{\beta}\in \mathfrak b(\sigma)$ such that $z_{\beta}\notin \mathfrak b(p_{i_0})$. This implies that $z_{\beta}\in \mathfrak b(p_{i_0}\cdot \sigma)$ and therefore $z_{\beta}\in\mathfrak b(\sigma)\cap\mathfrak b(p_{i_0}\cdot \sigma)\subset\mathfrak{b}(\mathcal{S}_{(2)})\bigcap\mathfrak{b}(\mathcal{S}_{(1)})$. This shows that the group $\mathcal S$ is connected.
\end{proof}

\begin{example}\label{connected cases} 
The standard subgroups $\mathcal{S}^{\bf p,0}\in\mathbb{S}_{r,0}$ constructed in Section~\ref{ex:standard group}
are connected for any $r\geq 0$.
\end{example}

\begin{proposition}\label{equivalence and pi0}
Let $PI_1, PI_2\in \mathbb{PI}_{r,s}^M$ be two generating sets. If $PI_1~\sim PI_2$, then $\pi_0(PI_1)=\pi_0(PI_2)$.
\end{proposition}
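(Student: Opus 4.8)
The plan is to work at the level of the groups $\mathcal S(PI_i)$ and to observe that the equivalence map $C$ preserves all data expressible through the basis-vector supports $\mathfrak b(\cdot)$, while $\pi_0$ is by its very definition such a datum. Recall first that $\pi_0(PI_i)$ means $\pi_0(\mathcal S(PI_i))$, and that by Definition~\ref{equiv relation} the hypothesis $PI_1 \sim PI_2$ provides $C \in \Orth(r,s)$ whose extension to $\Cl_{r,s}^*$ satisfies $C(\widehat{\mathcal S(PI_1)}) = \widehat{\mathcal S(PI_2)}$. Writing $\mathcal S_1 = \mathcal S(PI_1)$ and $\mathcal S_2 = \mathcal S(PI_2)$, I would begin by recording that $C$ is a signed re-ordering of $B_{r,s}$ taking positive vectors to positive and negative to negative; consequently it induces a genuine bijection $\bar C$ on the unsigned basis $\{z_1,\dots,z_{r+s}\}$ with $\mathfrak b(C(\sigma)) = \bar C(\mathfrak b(\sigma))$ for every $\sigma \in G(B_{r,s})$. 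In particular, since $\mathfrak b$ ignores signs and $C(\widehat{\mathcal S_1}) = \widehat{\mathcal S_2}$, the map $\bar C$ restricts to a bijection $\mathfrak b(\mathcal S_1) \to \mathfrak b(\mathcal S_2)$, and for every $\sigma \in \mathcal S_1$ there is a positive involution $\sigma' \in \mathcal S_2$ with $C(\sigma) = \pm\sigma'$ and $\mathfrak b(\sigma') = \bar C(\mathfrak b(\sigma))$, and symmetrically for $C^{-1}$.

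The heart of the argument is to transport disjoint-support decompositions. Suppose $\mathcal S_1 = \mathcal S_{(1)} \times \cdots \times \mathcal S_{(k)}$ realizes $\pi_0(\mathcal S_1) = k$ as in Definition~\ref{S is connected}, so each factor is connected and $\mathfrak b(\mathcal S_{(c)}) \cap \mathfrak b(\mathcal S_{(d)}) = \emptyset$ for $c \neq d$. I would apply $C$ to the extended groups: $C(\widehat{\mathcal S_{(c)}})$ is a $\pm$-closed subgroup of $\widehat{\mathcal S_2}$ whose support is $\bar C(\mathfrak b(\mathcal S_{(c)}))$, and these supports remain pairwise disjoint. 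Taking in each the subset of positive involutions (those lying in $\mathcal S_2$) produces subgroups $\mathcal T_{(c)} \subset \mathcal S_2$ with $\mathcal S_2 = \mathcal T_{(1)} \times \cdots \times \mathcal T_{(k)}$ and pairwise disjoint supports; here I use that an element of $\mathcal S_2$ factors through the disjoint supports because products of elements with disjoint supports concatenate their supports without cancellation (Clifford relation $z_iz_j = -z_jz_i$ for orthogonal $i\neq j$) and $\mathcal S_2$ is commutative. It remains to see that each $\mathcal T_{(c)}$ is connected: a nontrivial disjoint-support splitting of $\mathcal T_{(c)}$ would, upon applying $C^{-1}$ and passing to positive involutions, yield one of $\mathcal S_{(c)}$, contradicting $\pi_0(\mathcal S_{(c)}) = 1$. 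Hence $\pi_0(\mathcal S_2) \geq k$.

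Finally I would invoke the symmetric construction with $C^{-1}$ to obtain $\pi_0(\mathcal S_1) \geq \pi_0(\mathcal S_2)$, whence $\pi_0(\mathcal S_1) = \pi_0(\mathcal S_2)$, that is $\pi_0(PI_1) = \pi_0(PI_2)$. The delicate points I expect to dwell on are two. First, $\pi_0$ must be read as an invariant of the group $\mathcal S$ together with its supports, not of a chosen generating set: defining a ``linking graph'' on $\mathfrak b(\mathcal S)$ through an arbitrary $PI$ can give the wrong component count (for instance $\{z_1z_2,\ z_1z_2z_3z_4\}$ and $\{z_1z_2,\ z_3z_4\}$ generate the same group yet the first set of generators looks connected while the second does not), so the decomposition in Definition~\ref{S is connected} really must be taken intrinsically. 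Second, one must track the $\pm$ signs carefully when passing between $\mathcal S$ and $\widehat{\mathcal S}$; this is harmless because $\mathfrak b$ is sign-blind and $\widehat{\mathcal S}$ already absorbs every sign, but it is precisely where the extended groups $\widehat{\mathcal S}$ of Definition~\ref{def:group equiv} earn their keep.
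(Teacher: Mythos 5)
Your argument is correct and follows essentially the same route as the paper: both proofs exploit that the equivalence map $C$ is a signed re-ordering of $B_{r,s}$, hence induces a support-preserving bijection that carries a disjoint-support decomposition of $\widehat{\mathcal S(PI_1)}$ to one of $\widehat{\mathcal S(PI_2)}$. Your write-up is somewhat more thorough than the paper's (which treats only the two-factor case and leaves the connectivity of the transported factors and the reverse inequality implicit), but the underlying idea is identical.
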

\begin{proof}
We write $PI_1=\{p_k\}_{k=1}^{\ell(r,s)}$, $PI_2=\{q_m\}_{m=1}^{\ell(r,s)}$ and $|\mathfrak b(PI_j)|=t$, $j=1,2$. By the assumption there exists an orthogonal map $C$ such that $C\Big(\widehat{\mathcal S\big(\{p_k\}_{k=1}^{\ell(r,s)}\big)}\Big)=\widehat{\mathcal S\big(\{q_m\}_{m=1}^{\ell(r,s)}\big)}$. If 
$$
\mathcal S(PI_1)=\mathcal S_{(1)}\times\mathcal S_{(2)}=\mathcal S_{(1)}(PI_{11})\times\mathcal S_{(2)}(PI_{12}),\quad 
$$
with
$$
PI_{11}=\{p_{i_k}\}_{k=1}^{a},\quad |\mathfrak b(\{p_{i_k}\}_{k=1}^{a})|=\beta,
$$
and
$$
PI_{12}=\{p_{j_k}\}_{k=a+1}^{\ell(r,s)},\quad |\mathfrak b(\{p_{j_k}\}_{k=a+1}^{\ell(r,s)})|=t-\beta,
$$
then $\mathfrak b(\{p_{i_k}\}_{k=1}^{a})\cap \mathfrak b(\{p_{j_k}\}_{k=a+1}^{\ell(r,s)})=\emptyset$.  The map $C$ will map the disjoint sets $\mathfrak b(\{p_{i_k}\}_{k=1}^{a})$ and $\mathfrak b(\{p_{j_k}\}_{k=a+1}^{\ell(r,s)})$ onto disjoint sets $\mathcal Z_1=\{z_{i_1},\ldots,z_{i_{\beta}}\}$ and $\mathcal Z_2=\{z_{j_{\beta+1}},\ldots,z_{i_{t}}\}$. The set $\mathcal Z_1$ (with possible change of signs) will form the set $PI_{21}=\{q_{i_k}\}_{k=1}^{a}$ and the set $\mathcal Z_2$ (again with possible change of signs) will form the set $PI_{22}=\{q_{j_k}\}_{k=a+1}^{t}$.
Thus we obtain
$\mathcal S(PI_2)=\mathcal S(PI_{21})\times\mathcal S(PI_{22})$.
\end{proof}

We describe how the $\mathbb Z_2$-graded product of Clifford algebras can lead to the construction of disconnected subgroups $\mathcal S\subset \mathbb S_{r,s}$.
Consider the following decomposition of an orthonormal basis
$
\mathfrak B_{r,s}=\{z_1,\ldots, z_r,z_{r+1},\ldots,z_{r+s}\}
$:
\begin{equation*}\label{decom 1}
\underbrace{z_{1},\ldots,z_{r_1}}_{\text{positive}}, \underbrace{z_{r+1},\ldots,z_{r+s_1}}_{\text{negative}},
\quad\text{and}\quad
\underbrace{z_{r_1+1},\ldots,z_r}_{\text{positive}}, \underbrace{z_{r+s_1+1},\ldots,z_{r+s}}_{\text{negative}},
\end{equation*}
for $r_1\leq r$ and $s_1\leq s$. We put $r_2=r-r_1$ and $s_2=s-s_1$ and consider the decomposition
$\mathbb{R}^{r,s}\cong \mathbb{R}^{r_1,s_1}\oplus\mathbb{R}^{r_2,s_2}$, where
we assume $r_1+s_1\geq (r-r_1)+(s-s_1)=r_2+s_2$. This decomposition leads to the isomorphism 
$\Cl_{r_1,s_1}\widehat{\otimes}\Cl_{r_2,s_2}\cong \Cl_{r_1+r_2,s_1+s_2}
=\Cl_{r,s}$, where $\widehat{\otimes}$ denotes the $\mathbb Z_2$-graded tensor product of Clifford algebras, see~\cite[Proposition 1.5]{MR1031992}. For each of the Clifford algebras $\Cl_{r_k,s_k}$, $k=1,2$, we consider the minimal admissible modules $V^{r_k,s_k}$ and the corresponding sets $PI_{r_k,s_k}$.

For $r=r_1+r_2$ and $s=s_1+s_2$, we have 
$\ell(r_1,s_1)\leq \ell(r,s)$.
Let  $PI_{r_1,s_1}\in\mathbb{PI}_{r_1,s_1}^{M}$
and $PI_{r_2,s_2}\in\mathbb{PI}_{r_2,s_2}^{M}$ satisfy
$$
|\mathfrak{b}^{+}(PI_{r_1,s_1})|=r_1,\quad |\mathfrak{b}^{-}(PI_{r_1,s_1})|=s_1,
$$
$$
|\mathfrak{b}^{+}(PI_{r_2,s_2})|=r_2,\quad |\mathfrak{b}^{-}(PI_{r_2,s_2})|=s_2,
$$
 and 
 $PI_{r_1,s_1}\bigcap PI_{r_2,s_2}=\emptyset$. We assume also that each set contains
 at most one type $T_2$ involution $q_k\in PI_{r_k,s_k}$, $k=1,2$. Then by non-commutativity of $q_1$ and $q_2$ it is easy to see the following properties:
\begin{itemize}
\item[] If at least one of the sets $PI_{r_1,s_1}$ or $PI_{r_2,s_2}$ is $(T1)$-type set,
then
\[
PI_{r_1,s_1}\bigcup PI_{r_2,s_2}\in\mathbb{PI}_{r,s}.
\]
This implies
\begin{equation}\label{sum1}
\ell(r_1,s_1)+\ell(r_2,s_2)\leq \ell(r,s).
\end{equation}
\item[] If both $PI_{r_1,s_1}$ and $PI_{r_2,s_2}$ are $(T2)$-type sets, containing type $T_2$ involutions $q_1\in PI_{r_1,s_1}$ and $q_2\in PI_{r_2,s_2}$,
then 
\[
\big(PI_{r_1,s_1}\backslash \{q_1\}\big)\bigcup PI_{r_2,s_2}\in \mathbb{PI}_{r,s}
\quad\text{and}\quad
PI_{r_1,s_1}\bigcup \big(PI_{r_2,s_2}\backslash\{q_2\}\big)\in\mathbb{PI}_{r,s}.
\]
This implies
\begin{equation}\label{sum2}
\ell(r_1,s_1)+\ell(r_2,s_2)-1\leq \ell(r,s).
\end{equation}
\end{itemize}
One can state similar properties for any number of components in a decomposition $PI=\cup_k PI_{r_k,s_k}$.

\begin{remark}
If the equalities in~\eqref{sum1} or~\eqref{sum2} hold, then
non-connected subgroups $\mathcal S(PI_{r_1,s_1})$ and $\mathcal S(PI_{r_2,s_2})$ can be constructed from lower dimensions and 
$$
\mathcal{S}(PI_{r,s})=\mathcal S(PI_{r_1,s_1})\times\mathcal S(PI_{r_2,s_2}).
$$ 
Particularly, if $r\leq 9$ and $s\in\{0,1\}$, then all the groups are connected. It follows by showing that the inequalities~\eqref{sum1} and~\eqref{sum2} are always strict.
\end{remark}


\section{Construction of subgroups in $\mathbb{S}_{r,s}^M$, $r\in\{3,\ldots,16\}$, $s\in\{0,1\}$}\label{sec:nonisomorphic subgroups}



\subsection{General method of the construction}


In this section we apply the previous theory for the classification of groups $\mathcal S\subset \mathbb S_{r,s}$. We illustrate the theory by exact construction of non-equivalent subgroups for $0\leq r+s\leq 16$. We
restrict ourselves to $0\leq r+s\leq 16$ because we want to illustrate the main features that appear in classification without diving into technical details. The classification for arbitrary $\mathcal S\subset \mathbb S_{r,s}$ is postponed for a forthcoming paper. As we mentioned earlier for $r+s\geq 17$ the new  invariants, except of $\ell(r,s)$, $|\mathfrak b(PI)|$, $T_k$-type of $PI$ and connectedness, must be considered. For the moment we are not aware about new features that can occur for $r+s\geq 17$, and must work additionally to find them. 

We start from $s=0$. We classify groups $\mathcal S\in\mathbb S^M_{r,0}$ according to parameters: $\pi_0(\mathcal S)$, $|\mathfrak b(PI_{r,0})|$, and the type $(T1)$ or $(T2)$ of the set $PI$ generating the group $\mathcal S(PI)=\mathcal S\in\mathbb S^M_{r,0}$. We use the standard groups and notations introduced in Section~\ref{ex:standard group}. For a standard group we will add from none to two additional involutions $\pi_{1,2}$ and $\theta$, see Step 1 below for details. To distinguish the groups, where all previous parameters coincide, we assign the following information about $(TI)$-type sets, $I=1,2$: 
\begin{equation}\label{cases of adding involutions}
\left\{
\begin{array}{l}
\text{(i) We use the signature $(TI,\pi)$ if an additional involution }\\
\text{\qquad is related to product $\pi_{1,2}$;}\\
\text{(ii) We use the signature $(TI,\theta)$ if an additional involution }\\
\text{\qquad is related to product $\theta$;}\\
\text{(iii) We use the signature $(TI,\pi,\theta)$ if there are two additional }\\
\text{\qquad involutions, which are related to both products $\pi_{1,2}$ and $\theta$;}\\
\text{(iv) Finally we just write $(TI)$ if there are no involutions,}\\
\text{\qquad except of standards;}
\end{array}
\right.
\end{equation}
For each set of involutions $PI_{r,0}$ we write the signature
\begin{equation}\label{eq:signature0}
(\ell(r,0), (T1,\bullet,\bullet), |\mathfrak b(PI_{r,0})|).
\end{equation}

We summarise the results in Table~\ref{connected}. We list the set of generators $PI_{r,0}$ for each group. The group itself and the set of generators will be given up to a sign permutation. The word {\it unique} is understood in the sense of equivalence relation of Definition~\ref{def:group equiv} or Definition~\ref{equiv relation}.  

\subsubsection{Main steps of the construction of $\mathcal S\in\mathbb S_{r,0}^M$ for a fixed $r>0$.} We divide the construction into three steps.

{\it Step 1.} We start from a group $\mathcal S\in\mathbb S_{r,0}^M$  satisfying $\pi_0(\mathcal S)=1$ and $|\mathfrak b(PI_{r,0})|=r$. First we find a standard subgroup $\mathcal S^{\bf p,0}\subset \mathcal S$ and complement it (if necessary) by involutions to reach the maximal number  $\ell(r,0)$ of involutions in $PI_{r,0}$ generating $\mathcal S(PI_{r,0})\in\mathbb S_{r,0}^M$. All maximal standard subgroups $\mathcal S^{\bf p,0}\subset\mathcal S\in\mathbb{S}_{r,0}^M$, are equivalent modulo reordering by 
induction arguments
with respect to the dimension $(r,0)$, see also Proposition~\ref{prop:standard group}, item (iv). The additional involutions will be formed by checking whether the product of $\pi_{1,2}$ and/or $\theta$ by $z_r$ are involutions commuting with $\mathcal S^{\bf p,0}$. Note that once we fix a standard group $\mathcal S^{\bf p,0}$ with its generators of the form~\eqref{eq:pinu},
the complemented sets $PI^M_{r,0}$ (which is obtained by adding involutions $\pi$ and/or $\theta$) will be equivalent in the sense of Definition~\ref{equiv relation} if $PI^M_{r,0}$ have the same signature described in~\eqref{cases of adding involutions} and $\pi_0(\mathcal S(PI_{r,0}))=1$.

Then we consider a smaller standard subgroup $\mathcal S^{\bf p-1,0}\subset\mathcal S^{\bf p,0}$ and complement it by involutions to reach the maximal number $\ell(r,0)$ for $\mathcal S(PI_{r,0})$, checking whether the connectivity $\pi_0(\mathcal S(PI_{r,0}))=1$ is not violated. 

We can repeat the last step several times if the condition $\pi_0(\mathcal S(PI_{r,0}))=1$ still holds.

{\it Step 2.} We continue to look on $\pi_0(\mathcal S)=1$ and $|\mathfrak b(PI_{r,0})|=r-1$. We find a standard subgroup $\mathcal S^{\bf p,0}\subset \mathcal S$ and complement it by involutions to reach the maximal number  $\ell(r,0)$ of involutions in $PI_{r,0}$. Note that in this case the basis vector $z_r\in\mathfrak B_{r,0}$ is not appear in $PI_{r,0}$ by requirement $|\mathfrak b(PI_{r,0})|=r-1$. Then we pass to a smaller standard subgroup $\mathcal S^{\bf p-1,0}\subset\mathcal S^{\bf p,0}$ and complement it to the maximal number $\ell(r,0)$ for $\mathcal S(PI_{r,0})$. The connectivity $\pi_0(\mathcal S(PI_{r,0}))=1$ must not be violated. 

In most cases it will be a simple step back from $(r,0)$ to $(r-1,0)$ as, for example, for reduction from $PI_{4,0}$ to $PI_{3,0}$.

{\it Step 3.} Next we check $\pi_0(\mathcal S)=2$ and $\mathcal S(PI_{r,0})=\mathcal S_{(1)}(PI_{r_1,0})\times\mathcal S_{(2)}(PI_{r_2,0})$, with $r=r_1+r_2$. The pairs $(r_1,0)$ and $(r_2,0)$ satisfy~\eqref{sum1} and~\eqref{sum2}.   After finding all possible pairs satisfying~\eqref{sum1} and~\eqref{sum2} we apply steps 1 and 2 to each group $\mathcal S_{(1)}(PI_{r_1,0})$ and $\mathcal S_{(2)}(PI_{r_2,0})$. Note that by proceeding in the increasing order of $r=1,2,\ldots,16$ the groups $\mathcal S_{(1)}(PI_{r_1,0})$ and $\mathcal S_{(2)}(PI_{r_2,0})$ are already known.
If needed, we can proceed to higher number of connected components.

The equivalence of the groups constructed in the previous three steps is summarized in the following proposition.

\begin{lemma}\label{lem:3567}
If $r=3+8k, 5+8k, 6+8k,7+8k$ for $k\geq 0$, then
sets $PI_{r,0}\in\mathbb{PI}^{M}_{r,0}$ satisfying $\pi_0(\mathcal S(PI_{r,0}))=1$ and $|\mathfrak b(PI_{r,0})|=r$ are always of $(T2)$-type.
\end{lemma}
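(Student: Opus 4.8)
The plan is to translate the statement into binary coding theory and then apply a Gauss-sum (weight-enumerator) obstruction. Assume for contradiction that some $PI_{r,0}=\{p_1,\dots,p_{\ell(r,0)}\}\in\mathbb{PI}^{M}_{r,0}$ of the stated type is a $(T1)$-type set, i.e. every $p_i$ (and hence, by Proposition~\ref{prop:product}(A), every element of $\mathcal S(PI_{r,0})$) is a type $T_1$ involution. To each $\sigma=\pm z_{i_1}\cdots z_{i_k}\in\mathcal S(PI_{r,0})$ I would assign its support $\chi(\sigma)\in\mathbb{F}_2^{r}$, the indicator vector of $\mathfrak b(\sigma)$. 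Since $s=0$ each $z_j^2=-\mathbf 1$, so repeated factors cancel in products and $\chi$ is a homomorphism into $(\mathbb{F}_2^{r},+)$; its kernel is $\{\sigma:\mathfrak b(\sigma)=\emptyset\}=\{\pm\mathbf 1\}\cap\mathcal S(PI_{r,0})=\{\mathbf 1\}$, because $-\mathbf 1\notin\mathcal S$ by $(S1)$. Thus $\chi$ is injective and $W:=\chi(\mathcal S(PI_{r,0}))$ is an $\mathbb{F}_2$-subspace of dimension $\#[PI_{r,0}]=\ell(r,0)$. Commuting of the generators means (Proposition~\ref{prop:product}) that their supports meet evenly, so the generators are pairwise orthogonal and $W$ is self-orthogonal; ``type $T_1$'' means every codeword has Hamming weight $\equiv 0\bmod 4$. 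Hence $W$ is a doubly-even binary code of length $r$ and dimension $\ell(r,0)$.

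Next I would record $\ell(r,0)$. From $2^{\ell(r,0)}\dim(V^{r,0})=2^{r}$ with the dimensions in Table~\eqref{t:dim} and the periodicity $\ell(r+8,0)=\ell(r,0)+4$ of Proposition~\ref{periodocity of ell(r,s)}, one gets $\ell(r,0)=\lfloor r/2\rfloor$ for $r\equiv 3,5,6\bmod 8$ and $\ell(r,0)=(r+1)/2$ for $r\equiv 7\bmod 8$. The case $r\equiv 7\bmod 8$ is then immediate: a doubly-even code is self-orthogonal, so $\dim W\le\lfloor r/2\rfloor=(r-1)/2<\ell(r,0)$, a contradiction. In the remaining cases $\dim W=\lfloor r/2\rfloor$, i.e. $W$ is a maximal self-orthogonal code: $W^{\perp}=W$ when $r\equiv 6\bmod 8$, while for odd $r$ one has $W\subset W^{\perp}$ of index $2$, and since the all-ones vector $\mathbf 1$ lies in $W^{\perp}\setminus W$ (it is orthogonal to every even-weight word but has odd weight $r$), $W^{\perp}=W\sqcup(W+\mathbf 1)$.

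The crux, and the step I expect to be the main obstacle, is to rule out the maximal self-orthogonal cases $r\equiv 3,5,6\bmod 8$, where the dimension bound alone is not sharp (for $r\equiv 3,5$ the code would be maximal self-orthogonal, which self-orthogonality does not forbid). Here I would invoke the MacWilliams identity evaluated at a fourth root of unity: writing $W_{C}(x,y)=\sum_{c\in C}x^{\,r-\mathrm{wt}(c)}y^{\,\mathrm{wt}(c)}$ and using $W_{W^{\perp}}(x,y)=|W|^{-1}W_{W}(x+y,x-y)$ at $x=1$, $y=i$, double-evenness gives $\sum_{c\in W}(-i)^{\mathrm{wt}(c)}=|W|$ and hence
\begin{equation*}
\sum_{c\in W^{\perp}}i^{\,\mathrm{wt}(c)}=(1+i)^{r}=2^{r/2}e^{\,i r\pi/4}.
\end{equation*}
Evaluating the left-hand side on the explicit $W^{\perp}$ and using $i^{\mathrm{wt}(c)}=1$ for $c\in W$ yields $2^{r/2}$ when $r\equiv 6\bmod 8$ and $2^{(r-1)/2}\bigl(1+i^{\,r}\bigr)$ when $r$ is odd. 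Comparing phases, the right-hand side $2^{r/2}e^{ir\pi/4}$ forces $r\equiv 0\bmod 8$ in the even case and $r\equiv 1$ or $7\bmod 8$ in the odd case; since $r\equiv 3,5,6\bmod 8$ this is a contradiction.

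Therefore no $(T1)$-type set is maximal for these $r$ (note this argument in fact does not use $\pi_0=1$ or $|\mathfrak b(PI_{r,0})|=r$). Finally, since for $s=0$ every positive involution has $3$ or $4\bmod 4$ factors and is thus of type $T_1$ or $T_2$, a maximal set that is not $(T1)$-type must contain a type $T_2$ involution, i.e. is $(T2)$-type; this covers in particular the connected, full-support sets of the statement.
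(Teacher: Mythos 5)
Your proof is correct, but it takes a genuinely different route from the paper's. The paper argues by dimension reduction: assuming a $(T1)$-type set with full support and $\pi_0=1$, it normalizes against the standard subgroup so that $z_r$ sits in exactly one generator $p$, replaces $p$ by $z_r\cdot p$ to obtain a commuting set of the same cardinality supported on $r-1$ letters, and derives a contradiction from $\ell(r-1,0)=\ell(r,0)-1$ (with an extra case split for $r=6+8k$). You instead encode $\mathcal S(PI_{r,0})$ as a binary code via the support map --- which is indeed an injective homomorphism onto a doubly-even self-orthogonal subspace $W\subseteq\mathbb{F}_2^r$ of dimension $\ell(r,0)$, by $(S1)$ and Proposition~\ref{prop:product}(A) --- and then kill it with the MacWilliams identity at $y=i$: your values $\ell(r,0)=\lfloor r/2\rfloor$ for $r\equiv 3,5,6\bmod 8$ and $(r+1)/2$ for $r\equiv7\bmod 8$ check against Table~\ref{t:dim} and Proposition~\ref{periodocity of ell(r,s)}, the case $r\equiv7$ already violates the self-orthogonality bound, and in the remaining cases the identification $W^\perp=W$ (resp.\ $W^\perp=W\sqcup(W+\mathbf 1)$) together with the phase of $(1+i)^r$ correctly forces $r\equiv0$ (resp.\ $r\equiv\pm1$) $\bmod\,8$. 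What your approach buys is a strictly stronger statement --- no $(T1)$-type set exists in $\mathbb{PI}^M_{r,0}$ for these $r$ regardless of connectivity or of $|\mathfrak b(PI_{r,0})|$ --- and it sidesteps the paper's reliance on the somewhat informally justified standard-subgroup normalization; what it costs is the import of weight-enumerator machinery external to the paper, and it is specific to $s=0$ (for $s>0$ the support map would have to track signatures, not just supports), whereas the paper's reduction technique is the one reused throughout Section~\ref{sec:nonisomorphic subgroups}.
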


\begin{proof}
We start from $r=3+8k$. For the case $r=3$ there is only one type $T_2$ involution. Let $k\geq 1$ and assume, by contrary, that there is a $(T1)$-type set $PI_{r,0}\in \mathbb{PI}_{r,0}^{M}$. We have $\ell(r,0)=\ell(3+8k,0)=1+4k$.
The standard subgroup $\mathcal S^{\bf p,0}\subset\mathcal S(PI_{r,0})$, ${\bf p}=1+4k$, does not contain $z_r$, since $r$ is odd. Let $p_1,\ldots,p_{4k}$ be involutions generating $\mathcal S^{\bf p,0}$, then $z_r\in \mathfrak b(p_{1+4k})$, where $p_{1+4k}$ will be the last type $T_1$ involution. It implies
$$
\{p_1,\ldots,p_{4k},\  z_r\cdot p_{1+4k}\}\in \mathbb{PI}_{r-1,0}^{M}.
$$
This contradicts to $\ell(r-1,0)=\ell(2+8k,0)=\ell(3+8k,0)-1=\ell(r,0)-1$.

The arguments for the cases $r=5+8k$, and $r=7+8k$ are similar to the case $r=3+8k$.

Let $r=6+8k$. We assume that there is a $(T1)$-type set $PI_{r,0}\in \mathbb{PI}_{r,0}^{M}$. We have $\ell(r,0)=\ell(6+8k,0)=3+4k$.
The standard subgroup $\mathcal S^{\bf p,0}\subset\mathcal S(PI_{r,0})$, ${\bf p}=3+4k$, contains $z_r$. Let $p_1,\ldots,p_{2+4k}$ be involutions generating $\mathcal S^{\bf p,0}$, where we can assume that $z_r\in\mathfrak b(p_{2+4k})$ and $p_{3+4k}\in PI_{6+8k,0}$ is the last type $T_1$ involution. 
\begin{itemize}
\item[(1)] If $z_r\notin\mathfrak b(p_{3+4k})$, then 
$$
\{p_1,\ldots,p_{1+4k},\  z_r\cdot p_{2+4k},\ p_{3+4k}\}\in \mathbb{PI}_{r-1,0}^{M}.
$$
This contradicts to $\ell(r-1,0)=\ell(5+8k,0)=\ell(6+8k,0)-1=\ell(r,0)-1$.
\item[(2)] If $z_r\in\mathfrak b(p_{3+4k})$, then we replace $p_{3+4k}\in PI_{r,0}$ by another type $T_1$ involution $\tilde p_{3+4k}=p_{2+4k}p_{3+4k}$ Denote by $\widetilde{PI_{r,0}}$ the set which contains all the involutions from $PI_{r,0}$ and where $p_{3+4k}$ is replaced by the new involution $\tilde p_{3+4k}$. In this case $z_r\notin\mathfrak b(\tilde p_{3+4k})$ and the situation is reduced to the previous step (1). Note that the group $\mathcal S(PI_{r,0})$ is equivalent to $\mathcal S(\widetilde{PI_{r,0}})$.
\end{itemize}

We also note that for $r=3+8k$ and $r=7+8k$ the volume forms $\omega_r=\prod_{i=1}^rz_i$ which are  type $T_2$ involutions can be included to $PI_{r,0}$. It justifies the name $(T2)$-type set of $PI_{r,0}$ for  $r=3+8k$ and $r=7+8k$.
\end{proof}


\subsection{Construction of connected groups $\mathcal S\in\mathbb{S}^M_{r,0}$ for $r\in\{3,\ldots,16\}$}


In Table~\ref{connected} we collect the nonisomorphic connected groups $\mathcal S\in\mathbb{S}^M_{r,0}$ for $r\in\{3,\ldots, 16\}$; that is $\pi_0(\mathcal S)=1$. Note that if there is a group $\mathcal S=\mathcal S(PI_{r,0})\in\mathbb{S}^M_{r,0}$ whose generating set $PI_{r,0}$ has the signature 
$$
PI_{r,0}=(\ell(r,0), (Tk,\bullet,\bullet), |\mathfrak b(PI_{r,0})|),\ \ k=1,2,
$$
where
$$ 
\ell(r,0)=\ell(r-1,0),\ \ |\mathfrak b(PI_{r,0})|=r-1,
$$
then $\mathcal S(PI_{r,0})=\mathcal S(PI_{r-1,0})\in\mathbb{S}^M_{r-1,0}$ with
$$
PI_{r-1,0}=(\ell(r-1,0), (Tk,\bullet,\bullet), r-1),\ \ k=1,2,
$$
as for instance in Table~\ref{connected} for $r=4,8,9$ and some other values of $r$.

\begin{notation}
We write $\theta_{\overline{i,j}}$ to indicate that product in $\theta$ starts from $z_i$ and ends with $z_j$ containing all $z_k$ for odd $k$ between $i$ and $j$. We have
$
|\mathfrak b(\theta_{\overline{i,j}})|=\frac{j-i}{2}+1$.
\end{notation} 

\begin{table}[!htb]
    \caption{Connected groups in $\mathbb{S}^M_{r,0}$ for $r=3,\ldots,16$}
    \begin{minipage}{.5\linewidth}
      \centering
\scalebox{0.62}[0.62]{%
\begin{tabular}{|c|c|c|c|c|c|c|c|c|c|c|c|c|c|c|c|c|c|c|c|c||}\hline
$r$ & Signatures & $PI_{r,0}$  \\\hline

12  & $\begin{array}{ll}
&\mathcal S^{(1)}_{12}=\big(5,(T1,\pi),12\big)
\\
\\\hline
\\
&\mathcal S^{(2)}_{12}=\big(5,(T2,\theta),12\big)
\\
\\\hline
\\
&\mathcal S^{(3)}_{12}=\big(5,(T2,\pi),11\big)=\mathcal S_{11}
\end{array}$ & 
$\begin{array}{ll}
\mathcal S^{(1)}_{12}=\begin{cases}
&p_1=\pi_{1,2}\pi_{3,4}
\\
&p_2=\pi_{1,2}\pi_{5,6}
\\
&p_3=\pi_{1,2}\pi_{7,8}
\\
&p_4=\pi_{1,2}\pi_{9,10}
\\
&p_5=\pi_{1,2}\pi_{11,12}
\end{cases}
\\
\mathcal S^{(2)}_{12}=\begin{cases}
&p_1=\pi_{1,2}\pi_{3,4}
\\
&p_2=\pi_{1,2}\pi_{5,6}
\\
&p_3=\pi_{1,2}\pi_{7,8}
\\
&p_4=\pi_{1,2}\pi_{9,10}
\\
&q=\theta_{\overline{1,9}}\pi_{11,12}
\end{cases}
\end{array}$
\\\hline\hline

11  & 
$\mathcal S_{11}=\big(5,(T2,\pi),11\big)$ & 
$\begin{array}{ll}
&p_1=\pi_{1,2}\pi_{3,4}
\\
&p_2=\pi_{1,2}\pi_{5,6}
\\
&p_3=\pi_{1,2}\pi_{7,8}
\\
&p_4=\pi_{1,2}\pi_{9,10}
\\
&q=\pi_{1,2}z_{11}
\end{array}$  
\\\hline\hline

10 & $\begin{array}{ll}
&\mathcal S^{(1)}_{10}=\big(4,(T1,\pi),10\big)
\\\hline
&\mathcal S^{(2)}_{10}=\mathcal S^{(1)}_{9}
\\\hline
&\mathcal S^{(3)}_{10}=\mathcal S^{(2)}_{9}=\mathcal S^{(1)}_{8}
\\\hline
&\mathcal S^{(4)}_{10}=\mathcal S^{(3)}_{9}=\mathcal S^{(2)}_{8}=\mathcal S_{7}
\end{array}$ & 
$\mathcal S^{(1)}_{10}=\begin{cases}
&p_1=\pi_{1,2}\pi_{3,4}
\\
&p_2=\pi_{1,2}\pi_{5,6}
\\
&p_3=\pi_{1,2}\pi_{7,8}
\\
&p_4=\pi_{1,2}\pi_{9,10}
\end{cases}$
\\\hline\hline
9 & $\begin{array}{ll}
&\mathcal S^{(1)}_9=\big(4,(T2,\pi),9\big)
\\\hline
&\mathcal S^{(2)}_9=\mathcal S^{(1)}_8
\\\hline
&\mathcal S^{(3)}_9=\mathcal S^{(2)}_8=\mathcal S_7
\end{array}$ & 
$\mathcal S^{(1)}_9=\begin{cases}
&p_1=\pi_{1,2}z_3z_4
\\
&p_2=\pi_{1,2}z_5z_6
\\
&p_3=\pi_{1,2}z_7z_8
\\
&q=\pi_{1,2}z_9
\end{cases}$  \\\hline\hline

8 & $\begin{array}{ll}
&\mathcal S^{(1)}_8=\big(4,(T1,\theta),8\big)
\\
\\\hline
\\
&\mathcal S^{(2)}_8=\mathcal S_7
\end{array}$ & 
$\mathcal S^{(1)}_8=\begin{cases}
&p_1=\pi_{1,2}z_3z_4
\\
&p_2=\pi_{1,2}z_5z_6
\\
&p_3=\pi_{1,2}z_7z_8
\\
&p_4=\theta_{\overline{1,7}}
\end{cases}$  \\\hline\hline

7 &  $\mathcal S_7=\big(4,(T2,\pi,\theta),7\big)$ & 
$\begin{array}{ll}
&p_1=\pi_{1,2}z_3z_4
\\
&p_2=\pi_{1,2}z_5z_6
\\
&p_3=\theta_{\overline{1,5}}z_7
\\
&q=\pi_{1,2}z_7
\end{array}$ \\\hline\hline

6  & 
$\mathcal S_6=\big(3,(T2,\theta),6\big)$ & 
$\begin{array}{ll}
&p_1=\pi_{1,2}z_3z_4
\\
&p_2=\pi_{1,2}z_5z_6
\\
&q=\theta_{\overline{1,5}}
\end{array}$  \\\hline\hline

5  & $\mathcal S_5=\big(2,(T2,\theta),5\big)$ & 
$\begin{array}{ll}
&p=\pi_{1,2}z_3z_4
\\
&q=\theta_{\overline{1,5}}=z_1z_3z_5
\end{array}$
\\\hline\hline

4  & 
$\begin{array}{ll}
&\mathcal S^{(1)}_4=\big(1,(T1),4\big)
\\\hline
&\mathcal S^{(2)}_4=\big(1,(T2,\pi),3\big)=\mathcal S_3
\end{array}$ & 
$\begin{array}{ll}
&p=\pi_{1,2}z_3z_4
\\\hline
&q=\pi_{1,2}z_3
\end{array}$  \\\hline
\hline
3 & $\mathcal S_3=\big(1,(T2,\pi),3\big)$ & $q=\pi_{1,2}z_3$  \\\hline
\end{tabular}
}
    \end{minipage}%
    \begin{minipage}{.5\linewidth}
      \centering
\scalebox{0.56}[0.56]{%
 \begin{tabular}{|c|c|c|c|c|c|c|c|c|c|c|c|c|c|c|c|c|c|c|c|c||}\hline
$r$ & Signatures & $PI_{r,0}$  \\\hline

16 & $\begin{array}{ll}
&\mathcal S^{(1)}_{16}=\big(8,(T1,\theta),16\big)
\\
\\\hline
\\
&\mathcal S^{(2)}_{16}=\mathcal S_{15}
\end{array}$ & $
\mathcal S^{(1)}_{16}=
\begin{cases}
&p_1=\pi_{1,2}\pi_{3,4}
\\
&p_2=\pi_{1,2}\pi_{5,6}
\\
&p_3=\pi_{1,2}\pi_{7,8}
\\
&p_4=\pi_{1,2}\pi_{9,10}
\\
&p_5=\pi_{1,2}\pi_{11,12}
\\
&p_6=\pi_{1,2}\pi_{13,14}
\\
&p_7=\pi_{1,2}\pi_{15,16}
\\
&p_8=\theta_{\overline{1,13}}z_{15}
\end{cases}
$  \\\hline\hline

15 & $
\mathcal S_{15}=\big(8,(T2,\pi),15\big)
$ & 
$\begin{array}{ll}
&p_1=\pi_{1,2}\pi_{3,4}
\\
&p_2=\pi_{1,2}\pi_{5,6}
\\
&p_3=\pi_{1,2}\pi_{7,8}
\\
&p_4=\pi_{1,2}\pi_{9,10}
\\
&p_5=\pi_{1,2}\pi_{11,12}
\\
&p_6=\pi_{1,2}\pi_{13,14}
\\
&p_7=\theta_{\overline{1,13}}z_{15}
\\
&q=\pi_{1,2}z_{15}
\end{array}$  \\\hline\hline

14 &  $\begin{array}{ll}
&\mathcal S^{(1)}_{14}=\big(7,(T2,\theta),14\big)
\\
\\\hline
\\
&\mathcal S^{(2)}_{14}=\big(7,(T2,\pi,\theta),14\big)
\end{array}$ & 
$\begin{array}{ll}
&p_1=\pi_{1,2}\pi_{3,4}
\\
&p_2=\pi_{1,2}\pi_{5,6}
\\
&p_3=\pi_{1,2}\pi_{7,8}
\\
&p_4=\pi_{1,2}\pi_{9,10}
\\
&p_5=\pi_{1,2}\pi_{11,12}
\\
&p_6=\pi_{1,2}\pi_{13,14}
\\
&q=\theta_{\overline{1,13}}
\\\hline
&p_1=\pi_{1,2}\pi_{3,4}
\\
&p_2=\pi_{1,2}\pi_{5,6}
\\
&p_3=\pi_{1,2}\pi_{7,8}
\\
&p_4=\pi_{1,2}\pi_{9,10}
\\
&p_5=\pi_{1,2}\pi_{11,12}
\\
&p_6=\theta_{\overline{1,11}}\pi_{13,14}
\\
&q=\pi_{1,2}z_{14}
\end{array}$  
\\\hline\hline

13  & 
$\begin{array}{ll}
&\mathcal S^{(1)}_{13}=\big(6,(T2,\pi),13\big)
\\
\\
\\\hline
\\
\\
&\mathcal S^{(2)}_{13}=\big(6,(T2,\theta),13\big)
\\
\\
\\
\\\hline
\\
\\
&\mathcal S^{(3)}_{13}=\big(6,(T2,\pi,\theta),13\big)
\end{array}$ & 
$\begin{array}{ll}
&p_1=\pi_{1,2}\pi_{3,4}
\\
&p_2=\pi_{1,2}\pi_{5,6}
\\
&p_3=\pi_{1,2}\pi_{7,8}
\\
&p_4=\pi_{1,2}\pi_{9,10}
\\
&p_5=\pi_{1,2}\pi_{11,12}
\\
&q=\pi_{1,2}z_{13}
\\\hline
&p_1=\pi_{1,2}\pi_{3,4}
\\
&p_2=\pi_{1,2}\pi_{5,6}
\\
&p_3=\pi_{1,2}\pi_{7,8}
\\
&p_4=\pi_{1,2}\pi_{9,10}
\\
&p_5=\pi_{1,2}\pi_{11,12}
\\
&q=\theta_{\overline{1,13}}
\\\hline
&p_1=\pi_{1,2}\pi_{3,4}
\\
&p_2=\pi_{1,2}\pi_{5,6}
\\
&p_3=\pi_{1,2}\pi_{7,8}
\\
&p_4=\pi_{1,2}\pi_{9,10}
\\
&p_5=\theta_{\overline{1,9}}z_{11}z_{12}z_{13}
\\
&q=\pi_{1,2}z_{12}
\end{array}$  \\ \hline

\end{tabular}
}
    \end{minipage} 
    \label{connected}
\end{table}

We explain construction only for $r=7$, since it is the most illustrative.

The standard subgroup $\mathcal S^{\bf 3,\bf 0}\subset \mathcal S$ is generated by two involutions
\begin{equation}\label{eq:N6}
p_1=\pi_{1,2}\pi_{3,4},\quad p_2=\pi_{1,2}\pi_{5,6}.
\end{equation}
We need to add two involutions since $\ell(7,0)=4$, at least one of which must contain~$z_7$. We observe that the products $\pi_{1,2}z_7$ and $\theta_{\overline{1,5}} z_7=z_1z_3z_5z_7$ are both involutions commuting with generators~\eqref{eq:N6} and with each other. We append them both to reach $\ell(7,0)=4$. The reductions to $|\mathfrak b(PI_{7,0})|=6$ is not possible due to $\ell(6,0)<\ell(7,0)$.


\subsection{Constructions of disconnected groups $\mathcal S\in\mathbb{S}^M_{r,0}$ for $r\in\{10,\ldots, 16\}$}


We show in Table~\ref{disconnected} the disconnected groups with $\pi_0(\mathcal S_{r,0})=2$ for $r\in\{10,\ldots, 16\}$. 

We explain in details only the case $r=11$.
To construct the disconnected subgroup $\mathcal S^{(1)}_{11}=\mathcal S_{(1)}^{(1)}\times \mathcal S_{(2)}^{(1)}$ corresponding to the $\mathbb Z_2$-graded tensor product of the Clifford algebras $\Cl_{11,0}\cong \Cl_{8,0}\hat{\otimes}\Cl_{3,0}$ we start from the standard subgroup $\mathcal S_{(1)}^{\bf 4,0}\subset \mathcal S_{(1)}^{(1)}$ generated by
\begin{equation}\label{eq:N8}
p_1=\pi_{1,2}\pi_{3,4},\quad p_2=\pi_{1,2}\pi_{5,6},\quad p_3=\pi_{1,2}\pi_{7,8}.
\end{equation}
and add type $T_1$ involution $\theta=\theta_{\overline{1,7}}=z_1z_3z_5z_7$. The group $\mathcal S_{(1)}^{(1)}$ has the following signature $(4,(T1,\theta),8)$. Then the signature of $\mathcal S^{(1)}_{(2)}=\{{\bf 1}, \pi_{9,10}z_{11}\}$ is $(1,(T2,\pi),3)$ with $\pi=\pi_{9,10}z_{11}$.

To obtain $\mathcal S^{(2)}_{11}=\mathcal S_{(1)}^{(2)}\times \mathcal S_{(2)}^{(2)}$ corresponding to the $\mathbb Z_2$-graded tensor product of the Clifford algebras $\Cl_{11,0}\cong \Cl_{7,0}\hat{\otimes}\Cl_{4,0}$ we consider standard subgroup $\mathcal S_{(1)}^{\bf 3,0}\subset \mathcal S_{(1)}^{(2)}$ generated by~\eqref{eq:N6} and add type $T_1$ involution $\theta=\theta_{\overline{1,7}}$ and type $T_2$ involution $\pi=\pi_{1,2}z_7$. The group $\mathcal S_{(1)}^{(2)}$ obtain the signature $(4,(T2,\pi,\theta),7)$. Then $\mathcal S_{(2)}^{(2)}=\{{\bf 1}, \pi_{8,9}\pi_{10,11}\}$ has the signature $(1,(T1),4)$.

\begin{table}[!htb]
    \caption{Disconnected groups in $\mathbb{S}^M_{r,0}$ for $r=10,\ldots,16$}
    \begin{minipage}{.5\linewidth}
      \centering
\scalebox{0.44}[0.48]{%
\begin{tabular}{|c|c|c|c|c|c|c|c|c|c|c|c|c|c|c|c|c|c|c|c|c||}\hline
$r$ & Signatures & $PI$  \\\hline

12 & $\begin{array}{ll}
&\mathcal S^{(1)}_{12}=\big(4,(T1,\theta),8\big)\times \big(1,(T1),4\big)
\\\hline
\\
&\mathcal S^{(2)}_{12}=\big(3,(T1,\theta),7\big)\times \big(2,(T2,\pi),5\big)
\\\hline
\\
&\mathcal S^{(3)}_{12}=\big(3,(T2,\theta),6\big)\times \big(2,(T1),6\big)
\\\hline
\\
&\mathcal S^{(4)}_{12}=\mathcal S^{(1)}_{11}
\\\hline
\\
&\mathcal S^{(5)}_{12}=\mathcal S^{(2)}_{11}
\end{array}$ & $
\begin{array}{ll}
&\mathcal S^{(1)}_{12}=
\begin{cases}
(p_1)_1=\pi_{1,2}\pi_{3,4}\ & (p_1)_2=\pi_{9,10}\pi_{11,12}
\\
(p_2)_1=\pi_{1,2}\pi_{5,6}
\\
(p_3)_1=\pi_{1,2}\pi_{7,8}
\\
(p_4)_1=\theta_{\overline{1,7}}
\end{cases}
\\
&\mathcal S^{(2)}_{12}=
\begin{cases}
(p_1)_1=\pi_{1,2}\pi_{3,4}\ & (p_1)_2=\pi_{8,9}\pi_{10,11}
\\
(p_2)_1=\pi_{1,2}\pi_{5,6}\ & (q)_2=\pi_{8,9}z_{12}
\\
(p_3)_1=\theta_{\overline{1,7}}
\end{cases}
\\
&\mathcal S^{(3)}_{12}=
\begin{cases}
(p_1)_1=\pi_{1,2}\pi_{3,4}\ & (p_1)_2=\pi_{7,8}\pi_{9,10}
\\
(p_2)_1=\pi_{1,2}\pi_{5,6}\ & (p_1)_2=\pi_{7,8}\pi_{11,12}
\\
(q)_1=\theta_{\overline{1,5}}
\end{cases}
\end{array}
$  \\\hline\hline

11 & $
\begin{array}{ll}
&\mathcal S^{(1)}_{11}=\big(4,(T1,\theta),8\big)\times \big(1,(T2,\pi),3\big)
\\
\\\hline
\\
&\mathcal S^{(2)}_{11}=\big(4,(T2,\pi,\theta),7\big)\times \big(1,(T1),4\big)
\end{array}
$ & 
$\begin{array}{ll}
\begin{array}{llllll}
&(p_1)_{(1)}=\pi_{1,2}\pi_{3,4},\ & (q)_{(2)}=\pi_{9,10}z_{11}
\\
&(p_2)_{(1)}=\pi_{1,2}\pi_{5,6},
\\
&(p_3)_{(1)}=\pi_{1,2}\pi_{7,8},
\\
&(p_4)_{(1)}=\theta_{\overline{1,7}},
\end{array}
\\\hline
\begin{array}{llllll}
&(p_1)_{(1)}=\pi_{1,2}\pi_{3,4},\ & (p_1)_{(2)}=\pi_{8,9}\pi_{10,11}
\\
&(p_2)_{(1)}=\pi_{1,2}\pi_{5,6},
\\
&(p_3)_{(1)}=\theta_{\overline{1,7}},
\\
&(q)_{(1)}=\pi_{1,2}z_7,
\end{array}
\end{array}$  \\\hline\hline

10 &  $\begin{array}{ll}
&\mathcal S^{(1)}_{10}=\big(3,(T1,\theta),7\big)\times \big(1,(T2,\pi),3\big)
\\
\\\hline
\\
&\mathcal S^{(2)}_{10}=\big(3,(T2,\theta),6\big)\times \big(1,(T1),4\big)
\end{array}$ & 
$\begin{array}{ll}
\begin{array}{llllll}
&(p_1)_{(1)}=\pi_{1,2}\pi_{3,4},\ & (q)_{(2)}=\pi_{8,9}z_{10}
\\
&(p_2)_{(1)}=\pi_{1,2}\pi_{5,6},
\\
&(p_3)_{(1)}=\theta_{\overline{1,7}},
\end{array}

\\\hline
\begin{array}{llllll}
&(p_1)_{(1)}=\pi_{1,2}\pi_{3,4},\ & (p_1)_{(2)}=\pi_{7,8}\pi_{9,10}
\\
&(p_2)_{(1)}=\pi_{1,2}\pi_{5,6},
\\
&(q)_{(1)}=\theta_{\overline{1,5}},
\end{array}\end{array}$  
\\\hline
\end{tabular}

}
    \end{minipage}%
    \begin{minipage}{.5\linewidth}
      \centering
\scalebox{0.45}[0.45]{%
\begin{tabular}{|c|c|c|c|c|c|c|c|c|c|c|c|c|c|c|c|c|c|c|c|c||}\hline
$r$ & Signatures & $PI$  \\\hline

16 & $\begin{array}{ll}
\mathcal S^{(1)}_{16}=\big(4,(T1,\theta),8\big)\times \big(4,(T1,\theta),8\big)
\\
\\\hline
\\
\mathcal S^{(2)}_{16}=\mathcal S_{15}
\end{array}$ & 
$
\mathcal S^{(1)}_{16}=
\begin{cases}
(p_1)_1=\pi_{1,2}\pi_{3,4},\ & (p_1)_2=\pi_{9,10}\pi_{11,12}
\\
(p_2)_1=\pi_{1,2}\pi_{5,6},\ & (p_2)_2=\pi_{9,10}\pi_{13,14}
\\
(p_3)_1=\pi_{1,2}\pi_{7,8},\ & (p_3)_2=\pi_{9,10}\pi_{15,16}
\\
(p_4)_1=\theta_{\overline{1,7}}, & (p_4)_2=\theta_{\overline {9,15}}
\end{cases}
$  \\\hline\hline

15 & $
\mathcal S_{15}=\big(4,(T1,\theta),8\big)\times \big(4,(T2,\pi,\theta),7\big)
$ & $
\begin{array}{llllll}
&(p_1)_{(1)}=\pi_{1,2}\pi_{3,4},\ & (p_1)_{(2)}=\pi_{9,10}\pi_{11,12}
\\
&(p_2)_{(1)}=\pi_{1,2}\pi_{5,6},\ & (p_2)_{(2)}=\pi_{9,10}\pi_{13,14}
\\
&(p_3)_{(1)}=\pi_{1,2}\pi_{7,8},\ & (p_3)_{(2)}=\theta_{\overline {9,15}}
\\
&(p_4)_{(1)}=\theta_{\overline{1,7}},\ & (q)_{(2)}=\pi_{9,10}z_{15}
\end{array}
$  \\\hline\hline

14 & $
\begin{array}{ll}
&\mathcal S^{(1)}_{14}=\big(4,(T1,\theta),8\big)\times \big(3,(T2,\theta),6\big)
\\
\\\hline
\\
&\mathcal S^{(2)}_{14}=\big(4,(T2,\pi,\theta),7\big)\times \big(3,(T1,\theta),7\big)
\end{array}
$ & 
$\begin{array}{ll}
\begin{array}{llllll}
&(p_1)_{(1)}=\pi_{1,2}\pi_{3,4},\ & (p_1)_{(2)}=\pi_{9,10}\pi_{11,12}
\\
&(p_2)_{(1)}=\pi_{1,2}\pi_{5,6},\ &(p_2)_{(2)}=\pi_{9,10}\pi_{13,14}
\\
&(p_3)_{(1)}=\pi_{1,2}\pi_{7,8},\ & (q)_{(2)}=\theta_{\overline {9,13}}
\\
&(p_4)_{(1)}=\theta_{\overline{1,7}},
\end{array}
\\\hline
\begin{array}{llllll}
&(p_1)_{(1)}=\pi_{1,2}\pi_{3,4},\ & (p_1)_{(2)}=\pi_{8,9}\pi_{10,11}
\\
&(p_2)_{(1)}=\pi_{1,2}\pi_{5,6},\ &(p_2)_{(2)}=\pi_{8,9}\pi_{12,13}
\\
&(p_3)_{(1)}=\theta_{\overline{1,7}}, & (p_3)_{(2)}=\theta_{\overline {9,13}}z_{14}
\\
&(q_4)_{(1)}=\pi_{1,2}z_7,
\end{array}
\end{array}$  \\\hline\hline

13 &  $\begin{array}{ll}
&\mathcal S^{(1)}_{13}=\big(4,(T1,\theta),8\big)\times \big(2,(T2,\pi),5\big)
\\
\\\hline
\\
&\mathcal S^{(2)}_{13}=\big(4,(T2,\pi,\theta),7\big)\times \big(2,(T1),6\big)
\\
\\\hline
\\
&\mathcal S^{(3)}_{13}=\big(3,(T1,\theta),7\big)\times \big(3,(T2,\theta),6\big)
\end{array}$ & 
$\begin{array}{ll}
\begin{array}{llllll}
&(p_1)_{(1)}=\pi_{1,2}\pi_{3,4},\ &(p_1)_{(2)}=\pi_{9,10}\pi_{11,12},
\\
&(p_2)_{(1)}=\pi_{1,2}\pi_{5,6},& (q)_{(2)}=\pi_{9,10}z_{13}
\\
&(p_3)_{(1)}=\pi_{1,2}\pi_{7,8},
\\
&(p_4)_{(1)}=\theta_{\overline{1,7}},
\end{array}
\\\hline
\begin{array}{llllll}
&(p_1)_{(1)}=\pi_{1,2}\pi_{3,4},\ & (p_1)_{(2)}=\pi_{8,9}\pi_{10,11}
\\
&(p_2)_{(1)}=\pi_{1,2}\pi_{5,6},\ & (p_2)_{(2)}=\pi_{8,9}\pi_{12,13}
\\
&(p_3)_{(1)}=\theta_{\overline{1,7}},
\\
&(q)_{(1)}=\pi_{1,2}z_7,
\end{array}
\\\hline
\begin{array}{llllll}
&(p_1)_{(1)}=\pi_{1,2}\pi_{3,4},\ & (p_1)_{(2)}=\pi_{8,9}\pi_{10,11}
\\
&(p_2)_{(1)}=\pi_{1,2}\pi_{5,6},\ & (p_2)_{(2)}=\pi_{8,9}\pi_{12,13}
\\
&(p_3)_{(1)}=\theta_{\overline{1,7}},\ &(q)_{(2)}=z_{8}z_{10}z_{12}
\end{array}
\end{array}$  
\\\hline
\end{tabular}
}
    \end{minipage} 
    \label{disconnected}
\end{table}

\begin{proposition}\label{th:s=1}
Table~\ref{connected} and Table~\ref{disconnected} are the same for $H$-type Lie algebras $\mathfrak n_{r,1}$, $r\in\{3,\ldots,16\}$.
\end{proposition}
\begin{proof}
For $s=1$, the negative basis vector plays no role in forming involutions, see Definition~\ref{Def:S}. 
\end{proof}

\begin{table}[h]
\caption{Number of non-equivalent groups}
\scalebox{0.7}[0.7]{%
\begin{tabular}{|c|c|c|c|c|c|c|c|c|}
\hline
$r$                  &1&2&3&4&5&6&7&8  \\\hline
$\pi_0(\mathcal S)=1$ &0&0&1&2&1&1&1&2 \\\hline
$\pi_0(\mathcal S)=2$&0&0&0&0&0&0&0&0\\\hline\hline
 $r$  &9&10&11&12&13&14&15&16  \\\hline
$\pi_0(\mathcal S)=1$    &3&4&1&3&3&2&1&2 \\\hline
$\pi_0(\mathcal S)=2$    &0&2&2&5&3&2&1&2
\\\hline
\end{tabular}\label{tab:pNgroups}
}
\end{table}


\subsection{Construction of connected groups $\mathcal S\in\mathbb {S}^M_{r,s}$ for $0<r+s<16$}\label{s>1}


We show in Table~\ref{tab:0<r<9, s=0 to 8}
the possible maximal sets $PI_{r,s}\in\mathbb{PI}^{M}_{r,s}$
with {\bf{$\pi_{0}(PI_{r,s})=1$}} for some $r+s\leq 16$.
The different sets of involutions are determined based on the data of $\mathbb{PI}^{M}_{r+s,0}$ by a recurrent procedure.
Note that $|\mathfrak{b}(PI_{r,s})|=|\mathfrak{b}^{+}(PI_{r,s})|+|\mathfrak{b}^{-}(PI_{r,s})|\leq r+s$ because some of the basis vectors $z_j$ are not used when $s\geq 0$, as for instance the $(T2)$-type set in $\mathbb{PI}^M_{4,0}$ consists only of product of three vectors $z_j$.
We use the signature
\[
\ell(r,s), ~\text{Type~ ($T1$ or $T2$)}, ~(|\mathfrak{b}^{+}(PI)|,|\mathfrak{b}^{-}(PI)|)
\]
to indicate the non-equivalent sets of involutions $\mathbb{PI}^M_{r,s}$ in Table~\ref{tab:0<r<9, s=0 to 8}. 

The values $\ell(r,s)$ of the maximal number of involutions for $r+s\leq 16$ are collected in Table~\ref{16}. 

\begin{table}[th] 
\caption{{\small The value $\ell(r,s)$ for $r+s\leq 16$}}
\scalebox{0.7}[0.7]{%

\begin{tabular}{|c||c|c|c|c|c|c|c|c|c|c|c|c|c|c|c|c|c|c|c|c||}\hline
16& $8$ &  &&& $$ & $$  &$$ & $$& $$& $$ & $$ & $$ & $$ & $$ & $$ &$$ & $$\\\hline

15& $7$ & $7$ &&& $$ & $$  &$$ & $$& $$& $$ & $$ & $$ & $$ & $$ & $$ &$$ & $$\\\hline

14& $6$ & $7$ & $7$ && $$ & $$  &$$ & $$& $$& $$ & $$ & $$ & $$ & $$ & $$ &$$ & $$\\\hline

13& $5$ & $6$ & $7$ & $8$ & $$ & $$  &$$ & $$& $$& $$ & $$ & $$ & $$ & $$ & $$ &$$ & $$\\\hline

12& $5$ & $6$ & $7$ & $8$ & $8$ & $$  &$$ & $$& $$& $$ & $$ & $$ & $$ & $$ & $$ &$$ & $$\\\hline

11& $4$ & $5$ & $6$ & $7$ & $7$ & $7$  &$$ & $$& $$& $$ & $$ & $$ & $$ & $$ & $$ &$$ & $$\\\hline

10& $4$ & $5$ & $5$ & $6$ & $6$ & $7$  &$7$ & $$& $$& $$ & $$ & $$ & $$ & $$ & $$ &$$ & $$\\\hline

9  & $4$ & $4$ & $4$ & $5$ & $5$ & $6$  &$7$ & $8$& $$& $$ & $$ & $$ & $$ & $$ & $$ &$$ & $$\\\hline

8 & $4$ & $4$ & $4$ & $5$ & $5$ & $6$  &$7$ & $8$& $8$& $$ & $$ & $$ & $$ & $$ & $$ &$$ & $$\\\hline

7 & $3$ & $3$ & $3$ & $4$ & $4$ & $5$  &$6$ & $7$& $7$& $7$ & $$ & $$ & $$ & $$ & $$ &$$ & $$\\\hline

6  & $2$ & $3$ & $3$ & $4$ & $4$ & $5$  &$5$ & $6$& $6$& $7$ & $7$ & $$ & $$ & $$ & $$ &$$ & $$\\\hline

5  & $1$ & $2$ & $3$ & $4$ & $4$ & $4$  &$4$ & $5$& $5$& $6$ & $7$ & $8$ & $$ & $$ & $$ & $$  & $$\\\hline

4  & $1$ & $2$ & $3$ & $4$ & $4$ & $4$  &$4$ & $5$& $5$& $6$ & $7$ & $8$ & $8$ & $$ &$$& $$ & $$\\\hline

3 & $0$ & $1$ & $2$ & $3$ & $3$ & $3$  &$3$ & $4$& $4$& $5$ & $6$ & $7$ & $7$ & $7$ & $$ &$$&  \\\hline

2 & $0$ & $1$ & $1$ & $2$ & $2$ & $3$  &$3$ & $4$& $4$& $5$ & $5$ & $6$ & $6$ & $7$ & $7$ &$$&$$\\\hline

1 & $0$ & $0$ & $0$ & $1$ & $1$ & $2$  &$3$ & $4$& $4$& $4$ & $4$ & $5$ & $5$ & $6$ & $7$ &$8$ & $$\\\hline

0 & $0$ & $0$ & $0$ & $1$ & $1$ & $2$  &$3$ & $4$& $4$& $4$ & $4$ & $5$ & $5$ & $6$ & $7$ &$8$ & $8$\\\hline\hline

s/r  & $0$ & $1$ & $2$ & $3$ & $4$ & $5$  &$6$ & $7$& $8$& $9$ & $10$ & $11$ & $12$ & $13$ & $14$ &$15$ & $16$\\\hline
\end{tabular}
}
\label{16}
\end{table}

We mentioned here the facts that allow us to complete Tables~\ref{tab:0<r<9, s=0 to 8} and~\ref{tab:0<r<9, s=10 to 16}.
\begin{itemize}
\item[(F1)] $\ell(r+s,0)\geq \ell(r,s)\geq \ell(r+s,0)-1$, see Table~\ref{16}.
\item[(F2)] Assume that $r\geq 2$, $s\geq 0$, and $\ell(r,s)=\ell(r-1,s)$. There is a $(T1)$-type set $PI\in\mathbb{PI}^{M}_{r,s}$, 
if and only if there is a $(T2)$-type set $PI'\in\mathbb{PI}^{M}_{r-1,s}$. Through the natural inclusion $\Cl_{r-1,s}\subset \,\Cl_{r,s}$,
we can regard $PI'\in\mathbb{PI}^{M}_{r,s}$. For instance the $(T1)$-type set of involutions $PI_{4,0}=\{z_1z_2z_3z_4\}\in\mathbb{PI}^{M}_{4,0}$ exists if and only if exists the $(T2)$-type set $PI'=\{z_1z_2z_3\}\in\mathbb{PI}^{M}_{3,0}$.
\item[(F3)] If $p=z_{i_{1}}\cdots z_{i_{a}}\in PI_{r,0}$
is an involution, where $z_{i_{k}}$, $k=1,\ldots,a$, are all positive basis vectors, then $p'= z_{i_{1}}\cdots z_{i_{l}}z^*_{i_{l+1}}\cdots z^*_{i_{k}}\in PI_{r',s'}$, $r'+s'=r$ is an involution where we replaced the even number of positive basis vectors $z_{i_{l+1}},\ldots, z_{i_{k}}$ by negative basis vectors $z^*_{i_{l+1}},\ldots, z^*_{i_{k}}$, see~\eqref{eqT1T2}.
\end{itemize}
To complete Tables~\ref{tab:0<r<9, s=0 to 8} and~\ref{tab:0<r<9, s=10 to 16} we perform the following
\begin{itemize}
\item[(G1)] We determine the cases $(r,0)$ for all $r>0$. This was done in Table~\ref{connected}.
\item[(G2)] For any given $(r,0)$, we determine all the equivalence classes of $PI_{r',s'}$ with $r'+s'=r$ successively for $r=3,4,\ldots$ as follows.
\item[(G3)] Let us assume that $(r',s')$ with $r'+s'=r$ are such that all equivalent sets of $PI_{r'-1,s'}$ and $PI_{r',s'-1}$ are already determined. There are possible cases which define the rest of the steps:
\begin{equation}\label{r-1 case}
\ell(r',s')\geq\ell(r'-1,s'),
\end{equation}
\begin{equation}\label{s-1 case}
\ell(r',s')\geq\ell(r',s'-1),
\end{equation}
\begin{equation}\label{equal case}
\ell(r',s')\leq\ell(r,0).
\end{equation}
\item[(G4)] If $\ell(r',s')\geq\ell(r'-1,s')$ then we include all classes of involutions $PI_{r'-1,s'}$ to $\mathbb{PI}^{M}_{r',s'}$ and check whether  $(F2)$ is applicable.
\item[(G5)] If $\ell(r',s')\geq\ell(r',s'-1)$, then we include all classes of involutions $PI_{r',s'-1}$ to $\mathbb{PI}^{M}_{r',s'}$. We remove the classes of equivalence of involutions which coincide after performing steps $(G4)$ and $(G5)$. 
\item[(G6)] If $\ell(r',s')<\ell(r,0)$, then all the involutions were included in steps $(G4)$ and $(G5)$. If $\ell(r',s')=\ell(r,0)$, then we check all $PI_{r,0}\in \mathbb{PI}^{M}_{r,0}$ with $r=r'+s'$, whether we can apply property $(F3)$ to the involutions $p\in PI_{r,0}$ to get new involutions in $PI_{r',s'}$ reaching the number $\ell(r',s')=\ell(r,0)$.
\end{itemize}

\begin{remark} 
	Note that we have only two possibilities to produce new positive involutions: when we make a step from $(r'-1,s')$ to $(r',s')$ by modifying type $T2$ involution to type $T1$ involution by $(F2)$ and when we gain some new involution by $(F3)$. The induction steps $(G4)-(G6)$ allow us to include all the classes of involutions $PI_{r',s'}\in \mathbb{PI}^{M}_{r',s'}$ with $r'+s'=r$. 
\end{remark}

\begin{table}[h] 
\caption{{\small Connected groups for $r+s\leq 16$. Part 1}}
\scalebox{0.5}[0.5]{%

\begin{tabular}{|c||c|c|c|c|c|c|c|c|c|c|c|c|c|c|c|c|c||}\hline

$16$&$8,T1,(0,16)$&&&&&&&&&
\\
\hline

$15$&$7,T1,(0,15)$&$7,T1,(0,15$)&&&&&&&&
\\
&&$7,T2,(1,14)$&&&&&&&&
\\
\hline

$14$&$6,T1,(0,14)^1$&$7,T2,(1,14)$&$7,T1,(2,14)$&&&&&&&
\\
&$6,T1,(0,14)^2$&& $7,T2,(1,14)$&&&&&&&
\\
&&&$7,T2,(2,12)^{1}$&&&&&&&
\\
&&&$7,T2,(2,12)^{2}$&&&&&&&
\\
\hline

$13$&$5,T1,(0,12)$&$6,T2,(1,12)^{1}$&$7,T2,(2,12)^{1}$&$8,T2,(3,12)$&&&&&&
\\
&&$6,T2,(1,12)^{2}$&$7,T2,(2,12)^{2}$&&&&&&&
\\
\hline

$12$&$5,T1,(0,12)$&$6,T2,(1,12)^{1}$&$7,T2,(2,12)^{1}$&$8,T2,(3,12)$&$8,T1,(4,12)$&&&&&
\\
&&$6,T2,(1,12)^{2}$&$7,T2,(2,12)^{2}$&&$8,T2,(3,12)$&&&&&
\\
\hline

$11$&$4,T1,(0,10)$&$5,T2,(1,10)$&$6,T2,(2,11)^{2}$&$7,T2,(3,11)^{2}$&$7,T1,(4,11)^{2}$&$7,T1(4,11)^{2}$&&&&
\\
&$4,T1,(0,8)$ &$5,T2,(1,11)$ &$6,T2,(2,11)^{3}$&&$7,T2,(3,11)^{2}$&$7,T2,(3,11)^{2}$&&&&
\\
&&&&&&$7,T2(5,10)$&&&&
\\
\hline

$10$&$4,T1,(0,10)$&$5,T2,(1,10)$&$5,T1,(2,10)$&$6,T2,(3,10)^{1}$
&$6,T1,(4,10)^{1}$&$7,T2,(5,10)$&$7,T1,(6,10)$&&&
\\
&$4,T1,(0,8)$ &&$5,T2,(1,10)$&$6,T2,(3,10)^{3}$&$6,T2,(3,10)^{1}$ &&$7,T2,(5,10)$&&&
\\
&&&$5,T2,(2,10)$&&$6,T1,(4,10)^{3}$&&$7,T2,(6,8)^{1}$&&&
 \\
&&&&&$6,T2,(3,10)^{3}$&&$7,T2,(6,8)^{2}$&&&
\\
\hline

$9$&&$4,T2,(1,8)$&$4,T1,(2,8)$&$5,T2,(3,8)$&$5,T1,(4,8)$&$6,T2,(5,8)^{1}$&$7,T2,(6,8)^{1}$&$8,T2,(7,8)$&&
\\
&&&$4,T2,(1,8)$ & $$&$5,T2,(3,8)$&$6,T2,(5,8)^{2}$&$7,T2,(6,8)^{2}$&&&
\\
&$4,T1,(0,8)$&$4,T1,(0,8)$&$4,T1,(0,8)$&&$5,T2,(4,8)$&$6,T2,(5,8)^{3}$&&&&
\\
\hline

$8$ &$4,T1,(0,8)$&$4,T2,(1,8)$&$4,T1,(2,8)$&$5,T2,(3,8)$&$5,T1,(4,8)
 $&$6,T2,(5,8)^{1}$&$7,T2,(6,8)^{1}$&$8,T2,(7,8)$&$8,T1,(8,8)$&
\\
&&&$4,T2,(1,8)$&$ $&$5,T2,(3,8)$ &$6,T2,(5,8)^{2}$&$7,T2,(6,8)^{2}$&&$8,T2,(7,8)$&
\\
&&$4,T1,(0,8)$ & $4,T1,(0,8)$&&$5,T2,(4,8)$ & $6,T2,(5,8)^{3}$&&&&
\\
\hline

$7$ &$3,T1,(0,7)$&$3,T1,(0,7)$&$3,T1,(0,7)$&$4,T2,(3,6)$&$4,T1,(4,6)$&$5,T2,(5,6)$&
$6,T2,(6,7)^{1}$&$7,T2,(7,7)^{1}$&$7,T1,(8,7)^{1}$&$7,T2,(9,6)$
\\
&&$3,T2,(1,6)$&$3,T1,(2,6)$&$4,T2,(3,4)$&$4,T2,(3,6)$&$5,T2,(5,7)$&$6,T2,(6,7)^{2}$&&$7,T2,(7,7)^{1}$&$7,T1,(8,7)$
\\
&&$$&$3,T2,(1,6)$&&$4,T1,(4,4)$&&$6,T2,(6,7)^{3}$&&&$7,T2,(7,7)$
\\
&&$$&$3,T2,(2,4)$&&$4,T2,(3,4)$&&&&&
\\
\hline

$6$ &$2,T1,(0,6)$&$3, T2,(1,6)$&$3,T1,(2,6)$&$4,T2,(3,6)$&$4,T1,(4,6)$&$5,T2,(5,6)$&$5,T1,(6,6)$& $6,T2,(7,6)^{1}$
&$6,T1,(8,6)^{1}$&$7,T2,(9,6)$
\\
&&&$3,T2,(1,6)$&$4,T2,(3,4)$&$4,T2,(3,6)$&&$5,T2,(5,6)$&$6,T2,(7,6)^{2}$&$6,T2,(7,6)^{1}$& 
\\
&&&$3,T2,(2,4) $&&$4,T1,(4,4)$&&$5,T2,(6,6)$&$6,T2,(7,6)^{3}$&$6,T1,(8,6)^{2}$&
\\
&&&&&$4,T2,(3,4)$&&&$$&$6,T2,(7,6)^{2}$&
\\
&&&&&$4,T2,(4,5)$&&&&$6,T1,(8,6)^{3}$&
\\
&&&&&&&&&$6,T2,(7,6)^{3}$&
\\
\hline
       
$5$&
$1,T1,(0,4)$&$2,T2,(1,4)$&$3,T2,(2,4)$&$4,T2,(3,4)$&$4,T2,(4,5)$&$4,T2,(5, 4)$&${4,T1,(6,4)}$&$5,T2,(7,5)$&$5,T1,(8,5)$&$6,T2,(9,4)^1$
\\
&&&&&$4,T1,(4,4)$&$4,T1,(5, 5)$ &$4,T2,(5,4)$&$5,T2,(7,4)$&$5,T2,(7,5)$&$6,T2,(9,4)^2$
\\
&&&&&$4,T2,(3,4)$&$4,T2,(4,5)$ &$4,T1,(5,5)$&&$5,T1,(8,4)$&$6,T2,(9,4)^3$ 
\\
&&&&&&$4,T1,(4,4)$ &$4,T2,(4,5)$ &$$&$5,T2,(7,4)$&
\\
&&&&&&$4,T2,(3,4)$&$4,T1,(4,4)$&&&
\\
&&&&&&&$4,T2,(3,4)$&&&
\\
\hline

$4$&$1,T1,(0,4)$&$2,T2,(1,4)$&$3,T2,(2,4)$&$4,T2,(3,4)$
&$4,T1,(4,4)$&$4,T2,(5,4)$&$4,T1,(6,4)$ & $5,T2,(7,4)$&$5,T1,(8,4)$&$6,T2,(9,4)^1$
\\
&&&&&$4,T2,(3,4)$&$4,T1,(4,4)$&$4,T2,(5,4)$&&$5,T2,(7,4)$&$6,T2,(9,4)^2$
\\
&&&&&&$4,T2,(3,4)$&$4,T1,(4,4)$&&$5,T2,(8,4)$&$6,T2,(9,4)^3$
\\
&&&&&&&$4,T2,(3,4)$&&&
\\
\hline

$3$&$\ell=0$ & $1,T2,(1,2)$&$2,T2,(2,3)$ & $3,T2,(3,3)$ & $3,T1,(4,3)$ & $3,T2,(5,2)$ & 
$3,T1,(6,2)$ & $4,T2,(7,2)$ & $4,T1,(8,2)$&$5,T2,(9,3)$
\\
&&&&&$3,T2,(3,3)$&$3,T1,(4,3)$&$3,T2,(5,2)$&$4,T2,(7,0)$& $4,T2,(7,2)$&$5,T2,(9,2)$
\\
&&&&&&$3,T2,(3,3)$&$3,T2,(6,0)$&&$4,T1,(8,0)$&
\\
&&&&&&&$3,T1,(4,3)$&&$4,T2,(7,0)$&
\\
&&&&&&&$3,T2,(3,3)$&&&
\\
\hline

$2$ &$\ell=0$    &$1, T2,(1,2)$ & $1,T1, (2,2)$ & $2, T2,(3,2)$ & $2,T1, (4,2)$ & $3,T2,(5,2)$& 
$3,T1,(6,2)$ & $4,T2,(7,2)$ & $4,T1,(8,2)$&$5,T2,(9,2)$
\\
&&&$1,T2,(1,2)$&&$2,T2,(3,2)$&&$3,T2,(5,2)$&$4,T2,(7,0)$&$4,T2,(7,2)$&
\\
&&&&&&&$3,T2,(6,0)$&&$4,T1,(8,0)$&
\\
&&&&&&&&&$4,T2,(7,0)$&
\\
\hline

$1$ &$\ell=0$&$\ell=0$&$\ell=0$&$1, T2,(3,0)$&$1, T1, (4,0)$& $2,T2,(5,0)$
&$3,T2,(6,0)$&$4,T2,(7,0)$&$4,T1,(8,0)$&$4,T2,(9,0)$
\\ 
&&&&&$1,T2,(3,0)$&&&&$4,T2,(7,0)$&$4,T1,(8,0)$
\\ 
&&&&&&&&&&$4,T2,(7,0)$
\\ 
&&&&&&&&&&
\\
\hline

$0$&$\ell=0$&$\ell=0$&$\ell=0$&$1,T2, (3,0)$&$1, T1, (4,0)$
&$2,T2,(5,0)$&$3,T2,(6,0)$&$4,T2,(7,0)$&$4,T1,(8,0)$&$4,T2,(9,0)$
\\
&&&&&$1, T2, (3,0)$&&&&$4,T2,(7,0)$&$4,T1,(8,0)$
\\
&&&&&&&&&&$4,T2,(7,0)$ 
\\
&&&&&&&&&&
\\
\hline\hline

s/r  & $0$ & $1$ & $2$ & $3$ & $4$ & $5$  &$6$ & $7$& $8$ & $9$ 
\\
\hline
\end{tabular}
}
\label{values of ell(r,s),  r+s leq 16}
\label{tab:0<r<9, s=0 to 8}
\end{table}

\begin{table}[h] 
\caption{{\small Connected groups for $r+s\leq 16$. Part 2}}
\scalebox{0.7}[0.7]{%

\begin{tabular}{|c||c|c|c|c|c|c|c|c|c|c|c|c|c|c|c|c|c||}\hline

$6$ &$7,T1,(10,6)$&&&&&&
\\
&$7,T2,(9,6)$&&&&&& 
\\
&$7,T2,(10,4)^1$&&&&&&
\\
&$7,T2,(10,4)^2$&&&&&&
\\
\hline
       
$5$&$7,T2,(10,4)^1$&$8,T2,(11,4)$&&&&&
\\
&$7,T2,(10,4)^2$&&&&&&
\\
\hline

$4$&$7,T2,(10,4)^1$&$8,T2,(11,4)$&$8,T1,(12,4)$&&&&
\\
&$7,T2,(10,4)^2$&&$8,T2,(11,4)$&&&&
\\
\hline

$3$
&$6,T2,(10,3)^2$&$7,T1,(11,3)^{2}$&$7,T1,(12,3)^2$&$7,T2,(13,3)^2$&&&
\\
&$6,T2,(10,3)^3$&&$7,T2,(11,3)^2$&$7,T2,(11,3)^2$&&&
\\
&&&&$7,T2,(13,2)$&&&
\\
\hline

$2$ &$5,T1,(10,2)$&$6,T2,(11,2)^{1}$&$6,T2,(11,2)^1$&$7,T2,(13,2)$&$7,T1,(14,2)$&&
\\
&$5,T2,(9,2)$&$6,T2,(11,2)^{2}$&$6,T1,(12,2)^1$&&$7,T2,(13,2)$&&
\\
&$5,T2,(10,2)$&&$6,T2,(11,2)^3$&&$7,T2,(14,0)^1$&&
\\
&&&$6,T1,(12,2)^3$&&$7,T2,(14,0)^2$&&
\\
\hline

$1$ &$4,T1,(10,0)$&$5,T2,(11,0)$&$5,T1,(12,0)$&$6,T2,(13,0)^1$&$7,T2,(14,0)^1$&$8,T2,(15,0)$&$8,T1,(16,0)$
\\ 
&$4,T2,(9,0)$&&$5,T2,(11,0)$&$6,T2,(13,0)^2$&$7,T2,(14,0)^2$&&$8,T2,(15,0)$
\\ 
&$4,T1,(8,0)$&&$5,T2,(12,0)$&$6,T2,(13,0)^3$&&&
\\ 
&$4,T2,(7,0)$&&&&&&
\\
\hline

$0$ & $4,T1,(10,0)$&$5,T2,(11,0)$&$5,T1,(12,0)$&$6,T2,(13,0)^1$&$7,T2,(14,0)^1$&$8,T2,(15,0)$&$8,T1,(16,0)$
\\
&$4,T2,(9,0)$&&$5,T2,(11,0)$&$6,T2,(13,0)^2$&$7,T2,(14,0)^2$&&$8,T2,(15,0)$
\\
&$4,T1,(8,0)$&&$5,T2,(12,0)$&$6,T2,(13,0)^3$&&& 
\\
&$4,T2,(7,0)$&&&&&&
\\
\hline\hline

s/r & $10$ & $11$ & $12$& $13$ & $14$ & $15$ & $16$
\\
\hline
\end{tabular}
}
\label{tab:0<r<9, s=10 to 16}
\end{table}


\subsection{Constructions of disconnected groups $\mathcal S\in\mathbb {S}^M_{r,s}$ for $0<r+s<16$}\label{disconnected s>1}

Let $\pi_{0}(PI_{r,s})=2$ for $10\leq r+s\leq 16$.

If $\ell(r,s)=\ell(r+s,0)$, then the different sets of involutions are determined based on the data of $\mathbb{PI}^{M}_{r+s,0}$ with $\pi_{0}(PI)=2$. We apply the rules for connected sets to each collection of involutions in the disconnected set listed in Table~\ref{disconnected} for $r=10,\ldots 16$. We summarize the possible maximal sets $PI\in\mathbb{PI}^{M}_{r,s}$ in Tables~\ref{r=10, S5} and~\ref{r=13,S4}.

If $\ell(r,s)<\ell(r+s,0)$ then we proceed as in steps (G4)-(G6) of Section~\ref{s>1} for each connected subgroup $\mathcal S_{(1)}$ in the direct product decomposition $\mathcal S=\mathcal S_{(1)}\times \mathcal S_{(2)}$. We will not write this cases into Tables~\ref{r=10, S5} and~\ref{r=13,S4}, since they can be easily obtained by applying steps (G4)-(G6).

We explain in details the case $r+s=10$ to illustrate the procedure for the situation $\ell(r,s)=\ell(r+s,0)$. In Table~\ref{disconnected} there are two disconnected subgroups 
$$
\mathcal S^{(1)}_{10}\quad\text{with the signature}\quad (3,(T1,\theta),(7,0))\times(1,(T2,\pi),(3,0)),
$$
$$
\mathcal S^{(2)}_{10}\quad\text{with the signature}\quad (3,(T2,\theta),(6,0))\times(1,T1,(4,0)).
$$

Consider the case $\mathcal S^{(1)}_{10}$. We analyse Tables~\ref{tab:0<r<9, s=0 to 8} and~\ref{tab:0<r<9, s=10 to 16} and find all possible $(T1)$-type sets of involutions having the signature 
$$
(3,T1,(r,s))\quad\text{with}\quad r+s=7.
$$
We obtain
\begin{equation}\label{eq:70}
\begin{array}{lll}
&(3,T1,(7,0))\quad\text{which comes from }\ (3,T2,(6,0)) \quad\text{by using property}\quad (F2) 
\\
&(3,T1,(3,4))\quad\text{which comes from }\ (3,T2,(2,4)) \quad\text{by using property}\quad (F2)
\\
&(3,T1,(4,3))\quad\text{and}\quad (3,T1,(0,7)) \quad\text{listed in Tables~\ref{tab:0<r<9, s=0 to 8} and~\ref{tab:0<r<9, s=10 to 16}}.
\end{array}
\end{equation}
Next, we find all possible $(T2)$-type sets of involutions having the signature 
$$
(1,T2,(r,s))\quad\text{with}\quad r+s=3.
$$
They are the following
\begin{equation}\label{eq:30}
(1,T2,(3,0))\quad\text{and}\quad (1,T2,(1,2)).
\end{equation}
At the end we make all possible products of two groups of involutions, where the first one belongs to~\eqref{eq:70} and the second one belongs to~\eqref{eq:30}. For instance, we obtain the disconnected subgroups in $\mathbb{PI}^M_{r,s}$ with $r+s=10$:
$$
\mathcal S^{(1)}_{10}=
(3,T1,(7,0))
\times (1,T2,(3,0))
\in\mathbb{PI}^M_{10,0},\quad
$$
and 
$$
\mathcal S^{(1)}_{6,4}=
(3,T1,(3,4))
\times(1,T2,(3,0))
\in\mathbb{PI}^M_{6,4},
$$
and so on. These disconnected groups and all other are listed in Tables~\ref{r=10, S5} and~\ref{r=13,S4}. 

We do analogous calculations for the disconnected group $\mathcal S^{(2)}_{10}\in\mathbb{PI}^M_{10,0}$ and write the results in Tables~\ref{r=10, S5} and~\ref{r=13,S4}.

\begin{center}
\begin{table}[h]  
\caption{{\small $\pi_{0}(PI_{r,s})=2$, $r+s\leq 16$,\ \text{Part 1}}}
\scalebox{0.35}[0.35]{
\begin{tabular}{|c||c|c|c|c|c|c|c|c|c|c|c|c||}\hline

16&$\mathcal S^{(1)}_{0,16}=\begin{array}{llll}(4,T1,(0,8))\\\times (4,T1,(0,8))\end{array}$&&&&&&&& 

\\
\hline
15&&&&&&&&& 
\\
\hline
14&&&&&&&&&
\\
\hline

13&&&&&&&&& 
\\
\hline

12&$\mathcal S^{(1)}_{0,12}=\begin{array}{llll}(4,T1,(0,8))\\\times (1,T1,(0,4))\end{array}$&
$\mathcal S^{(1)}_{1,12}=\begin{array}{llll}(4,T1,(0,8))\\\times (2,T2,(2,3))\end{array}$
&
$\mathcal S^{(1)}_{2,12}=\begin{array}{llll}(4,T1,(0,8))\\\times (3,T2,(2,4))\end{array}$
&$\mathcal S_{3,12}=\begin{array}{llll}(4,T1,(0,8))\\\times (4,T2,(3,4))\end{array}$&
$\begin{array}{lll}
\mathcal S^{(1)}_{4,12}=\begin{array}{llll}(4,T1,(4,4))\\\times (2,T1,(0,8))\end{array}\\\hline\hline
\mathcal S^{(1)}_{4,12}=\mathcal S_{3,12}\\
\end{array}$&&&&
\\
\hline

11&&$\mathcal S^{(1)}_{1,11}=\begin{array}{llll}(3,T1,(0,7))\\\times (2,T2,(1,4))\end{array}$&
$\begin{array}{lll}
\mathcal S^{(1)}_{2,11}=\begin{array}{llll}(4,T1,(0,8))\\\times (2,T2,(2,3))\end{array}\\\hline
\mathcal S^{(3)}_{2,11}=\begin{array}{llll}(3,T1,(0,7))\\\times (3,T2,(2,4))\end{array}\\
\end{array}$
&
$\begin{array}{lll}
\mathcal S^{(1)}_{3,11}=\begin{array}{llll}(4,T1,(0,8))\\\times (3,T2,(3,3))\end{array}\\\hline
\mathcal S^{(2)}_{3,11}=\begin{array}{llll}(2,T2,(3,4))\\\times (3,T1,(0,7))\end{array}\\
\end{array}$&
&&&&
\\
\hline

10&&$\mathcal S^{(1)}_{1,10}=\begin{array}{llll}(4,T1,(0,8))\\\times (1,T2,(1,2))\end{array}$&
$\begin{array}{lll}
\mathcal S^{(1)}_{2,10}=\begin{array}{llll}(4,T1,(0,8))\\\times (1,T1,(2,2))\end{array}\\\hline
\mathcal S^{(2)}_{2,10}=\begin{array}{llll}(3,T1,(0,7))\\\times (2,T2,(2,3))\end{array}\\\hline
\mathcal S^{(3)}_{2,10}=\begin{array}{llll}(3,T2,(2,4))\\\times (2,T1,(0,6))\end{array}\\\hline\hline
\mathcal S^{(4)}_{2,10}=\mathcal S^{(1)}_{1,10}
\end{array}$
&$\begin{array}{lll}
\mathcal S^{(1)}_{3,10}=\begin{array}{llll}(4,T1,(0,8))\\\times (2,T2,(3,2))\end{array}\\\hline
\mathcal S^{(1)}_{3,10}=\begin{array}{llll}(4,T2,(3,4))\\\times (2,T1,(0,6))\end{array}\\\hline
\mathcal S^{(1)}_{3,10}=\begin{array}{llll}(3,T1,(0,7))\\\times (3,T2,(3,3))\end{array}\\
\end{array}$
&&&&&
\\
\hline

9&&$\mathcal S^{(1)}_{1,9}=\begin{array}{llll}(3,T1,(0,7))\\\times (1,T2,(1,2))\end{array}$&&
$\begin{array}{lll}
\mathcal S^{(2)}_{3,9}=\begin{array}{llll}(3,T1,(0,7))\\\times (2,T2,(3,2))\end{array}\\\hline
\mathcal S^{(3)}_{3,9}=\begin{array}{llll}(3,T2,(3,3))\\\times (2,T1,(0,6))\end{array}
\end{array}$
&&&&&
\\
\hline

8&&&
$\mathcal S^{(2)}_{2,8}=\begin{array}{lll}(3,T2,(2,4))\\\times (1,T1,(0,4))\end{array}$&
$\begin{array}{lll}
\mathcal S^{(1)}_{3,8}=\begin{array}{llll}(4,T1,(0,8))\\\times (1,T2,(3,0))\end{array}\\\hline
\mathcal S^{(2)}_{3,8}=\begin{array}{llll}(4,T2,(3,4))\\\times (1,T1,(0,4))\end{array}
\end{array}$&
$\begin{array}{lll}
\mathcal S^{(1)}_{4,8}=\begin{array}{llll}(4,T1,(4,4))\\\times (1,T1,(0,4))\end{array}\\\hline
\mathcal S^{(1)}_{4,8}=\begin{array}{llll}(4,T1,(0,8))\\\times (1,T1,(4,0))\end{array}\\\hline
\mathcal S^{(2)}_{4,8}=\begin{array}{llll}(3,T1,(3,4))\\\times (2,T2,(1,4))\end{array}\\\hline
\mathcal S^{(3)}_{4,8}=\begin{array}{llll}(3,T2,(2,4))\\\times (2,T1,(2,4))\end{array}\\\hline\hline
\mathcal S^{(4)}_{4,8}=\mathcal S^{(1)}_{3,8}\\\hline
\mathcal S^{(5)}_{4,8}=\mathcal S^{(2)}_{3,8}
\end{array}$
&$\begin{array}{lll}
\mathcal S^{(1)}_{5,8}=\begin{array}{llll}(4,T1,(4,4))\\\times (2,T2,(1,4))\end{array}\\\hline
\mathcal S^{(1)}_{5,8}=\begin{array}{llll}(4,T1,(0,8))\\\times (2,T2,(5,0))\end{array}\\\hline
\mathcal S^{(2)}_{5,8}=\begin{array}{llll}(4,T2,(3,4))\\\times (2,T1,(2,4))\end{array}\\\hline
\mathcal S^{(3)}_{5,8}=\begin{array}{llll}(3,T1,(3,4))\\\times (3,T2,(2,4))\end{array}\\
\end{array}$
&
$\begin{array}{lll}
\mathcal S^{(1)}_{6,8}=\begin{array}{llll}(4,T1,(4,4))\\\times (3,T2,(2,4))\end{array}\\\hline
\mathcal S^{(1)}_{6,8}=\begin{array}{llll}(4,T1,(0,8))\\\times (3,T2,(6,0))\end{array}\\\hline
\mathcal S^{(2)}_{6,8}=\begin{array}{llll}(2,T2,(3,4))\\\times (3,T1,(3,4))\end{array}\\
\end{array}$
&$\begin{array}{lll}
\mathcal S^{(1)}_{7,8}=\begin{array}{llll}(4,T1,(4,4))\\\times (4,T2,(3,4))\end{array}\\\hline
\mathcal S^{(2)}_{7,8}=\begin{array}{llll}(4,T1,(0,8))\\\times (4,T2,(7,0))\end{array}\\
\end{array}$&
$\begin{array}{lll}
\mathcal S^{(1)}_{8,8}=\begin{array}{llll}(4,T1,(8,0))\\\times (2,T1,(0,8))\end{array}\\\hline\hline
\\
\mathcal S^{(2)}_{8,8}=\mathcal S_{7,8}^{(1)}\\\hline
\mathcal S^{(2)}_{8,8}=\mathcal S_{7,8}^{(2)}\\
\end{array}$
\\
\hline

7&&&&
$\begin{array}{lll}
\mathcal S^{(1)}_{3,7}=\begin{array}{llll}(3,T1,(0,7))\\\times (1,T2,(3,0))\end{array}\\\hline
\mathcal S^{(2)}_{3,7}=\begin{array}{lll}(3,T2,(3,3))\\\times (1,T1,(0,4))\end{array}
\end{array}$
&&
$\begin{array}{lll}
\mathcal S^{(2)}_{5,7}=\begin{array}{llll}(3,T1,(4,3))\\\times (2,T2,(1,4))\end{array}\\\hline
\mathcal S^{(2)}_{5,7}=\begin{array}{llll}(3,T1,(3,4))\\\times (2,T2,(2,3))\end{array}\\\hline
\mathcal S^{(2)}_{5,7}=\begin{array}{llll}(3,T1,(0,7))\\\times (2,T2,(5,0))\end{array}\\\hline
\mathcal S^{(3)}_{5,7}=\begin{array}{llll}(3,T2,(3,3))\\\times (2,T1,(2,4))\end{array}\\\hline
\mathcal S^{(3)}_{5,7}=\begin{array}{llll}(3,T2,(2,4))\\\times (2,T1,(3,3))\end{array}
\end{array}$&
$\begin{array}{lll}
\mathcal S^{(1)}_{6,7}=\begin{array}{llll}(4,T1,(4,4))\\\times (2,T2,(2,3))\end{array}\\\hline
\mathcal S^{(2)}_{6,7}=\begin{array}{llll}(4,T2,(3,4))\\\times (2,T1,(3,3))\end{array}\\\hline
\mathcal S^{(3)}_{6,7}=\begin{array}{llll}(3,T1,(4,3))\\\times (3,T2,(2,4))\end{array}\\\hline
\mathcal S^{(3)}_{6,7}=\begin{array}{llll}(3,T1,(3,4))\\\times (3,T2,(3,3))\end{array}\\\hline
\mathcal S^{(3)}_{6,7}=\begin{array}{llll}(3,T1,(0,7))\\\times (3,T2,(6,0))\end{array}\\
\end{array}$&
$\begin{array}{lll}
\mathcal S^{(1)}_{7,7}=\begin{array}{llll}(4,T1,(4,4))\\\times (3,T2,(3,3))\end{array}\\\hline
\mathcal S^{(2)}_{7,7}=\begin{array}{llll}(2,T2,(7,0))\\\times (3,T1,(0,7))\end{array}\\\hline
\mathcal S^{(2)}_{7,7}=\begin{array}{llll}(2,T2,(3,4))\\\times (3,T1,(4,2))\end{array}\\
\end{array}$&
\\
\hline

6&&&&&
$\begin{array}{lll}
\mathcal S^{(1)}_{4,6}=\begin{array}{llll}(3,T1,(3,4))\\\times (1,T2,(1,2))\end{array}\\\hline
\mathcal S^{(2)}_{4,6}=\begin{array}{lll}(3,T2,(2,4))\\\times (1,T1,(2,2))\end{array}
\end{array}$&
$\begin{array}{lll}
\mathcal S^{(1)}_{5,6}=\begin{array}{llll}(4,T1,(4,4))\\\times (1,T2,(1,2))\end{array}\\\hline
\mathcal S^{(2)}_{5,6}=\begin{array}{llll}(4,T2,(3,4))\\\times (1,T1,(2,2))\end{array}
\end{array}$&
$\begin{array}{lll}
\mathcal S^{(1)}_{6,6}=\begin{array}{llll}(4,T1,(4,4))\\\times (1,T1,(2,2))\end{array}\\\hline
\mathcal S^{(2)}_{6,6}=\begin{array}{llll}(3,T1,(4,3))\\\times (2,T2,(2,3))\end{array}\\\hline
\mathcal S^{(2)}_{6,6}=\begin{array}{llll}(3,T1,(3,4))\\\times (2,T2,(3,2))\end{array}\\\hline
\mathcal S^{(3)}_{6,6}=\begin{array}{llll}(3,T2,(6,0))\\\times (2,T1,(0,6))\end{array}\\\hline
\mathcal S^{(3)}_{6,6}=\begin{array}{llll}(3,T2,(3,3))\\\times (2,T1,(3,3))\end{array}\\\hline
\mathcal S^{(3)}_{6,6}=\begin{array}{llll}(3,T2,(2,4))\\\times (2,T1,(4,2))\end{array}\\\hline\hline
\mathcal S^{(4)}_{6,6}=\mathcal S^{(1)}_{5,6}\\\hline
\mathcal S^{(5)}_{6,6}=\mathcal S^{(2)}_{5,6}
\end{array}$
&$\begin{array}{lll}
\mathcal S^{(1)}_{7,6}=\begin{array}{llll}(4,T1,(4,4))\\\times (2,T2,(3,2))\end{array}\\\hline
\mathcal S^{(2)}_{7,6}=\begin{array}{llll}(4,T2,(7,0))\\\times (2,T1,(0,6))\end{array}\\\hline
\mathcal S^{(2)}_{7,6}=\begin{array}{llll}(4,T2,(3,4))\\\times (2,T1,(4,2))\end{array}\\\hline
\mathcal S^{(3)}_{7,6}=\begin{array}{llll}(3,T1,(4,3))\\\times (3,T2,(3,3))\end{array}\\
\end{array}$&
\\
\hline 

5&&&&&&
$\begin{array}{lll}
\mathcal S^{(1)}_{5,5}=\begin{array}{lll}(3,T1,(4,3))\\\times (1,T2,(1,2))\end{array}\\\hline
\mathcal S^{(2)}_{5,5}=\begin{array}{lll}(3,T2,(3,3))\\\times (1,T1,(2,2))\end{array}
\end{array}$
&&
$\begin{array}{lll}
\mathcal S^{(1)}_{7,5}=\begin{array}{llll}(3,T1,(4,3))\\\times (2,T2,(3,2))\end{array}\\\hline
\mathcal S^{(2)}_{7,5}=\begin{array}{llll}(3,T2,(3,3))\\\times (2,T1,(4,2))\end{array}\\
\end{array}$
&
\\
\hline

4&&&&&&&
$\begin{array}{lll}
\mathcal S^{(1)}_{6,4}=\begin{array}{lll}(3,T1,(3,4))\\\times (1,T2,(3,0))\end{array}\\\hline
\mathcal S^{(2)}_{6,4}=\begin{array}{lll}(3,T2,(6,0))\\\times (1,T1,(0,4))\end{array}\\\hline
\mathcal S^{(2)}_{6,4}=\begin{array}{lll}(3,T2,(2,4))\\\times (1,T1,(4,0))\end{array}
\end{array}$&
$\begin{array}{lll}
\mathcal S^{(1)}_{7,4}=\begin{array}{llll}(4,T1,(4,4))\\\times (1,T2,(3,0))\end{array}\\\hline
\mathcal S^{(2)}_{7,4}=\begin{array}{llll}(4,T2,(7,0))\\\times (1,T1,(0,4))\end{array}\\\hline
\mathcal S^{(2)}_{7,4}=\begin{array}{llll}(4,T2,(3,4))\\\times (1,T1,(4,0))\end{array}
\end{array}$&
$\begin{array}{lll}
\mathcal S^{(1)}_{8,4}=\begin{array}{llll}(4,T1,(8,0))\\\times (1,T1,(0,4))\end{array}\\\hline
\mathcal S^{(1)}_{8,4}=\begin{array}{llll}(4,T1,(4,4))\\\times (1,T1,(4,0))\end{array}\\\hline
\mathcal S^{(2)}_{8,4}=\begin{array}{llll}(3,T1,(7,0))\\\times (2,T2,(1,4))\end{array}\\\hline
\mathcal S^{(2)}_{8,4}=\begin{array}{llll}(3,T1,(3,4))\\\times (2,T2,(5,0))\end{array}\\\hline
\mathcal S^{(3)}_{8,4}=\begin{array}{llll}(3,T2,(6,0))\\\times (2,T1,(2,4))\end{array}\\\hline
\mathcal S^{(3)}_{8,4}=\begin{array}{llll}(3,T2,(2,4))\\\times (2,T1,(6,0))\end{array}\\\hline\hline
\mathcal S^{(2)}_{8,4}=\mathcal S^{(1)}_{7,4}\\\hline
\mathcal S^{(3)}_{8,4}=\mathcal S^{(2)}_{7,4}\\\hline
\mathcal S^{(3)}_{8,4}=\mathcal S^{(2)}_{7,4}
\end{array}$
\\
\hline

3&&&&&&&&
$\begin{array}{lll}
\mathcal S^{(1)}_{7,3}=\begin{array}{llll}(3,T1,(4,3))\\\times (1,T2,(3,0))\end{array}\\\hline
\mathcal S^{(2)}_{7,3}=\begin{array}{llll}(3,T2,(3,3))\\\times (1,T1,(4,0))\end{array}
\end{array}$&
\\
\hline

2&&&&&&&&&
$\begin{array}{lll}
\mathcal S^{(1)}_{8,2}=\begin{array}{llll}(3,T1,(7,0))\\\times (1,T2,(1,2))\end{array}\\\hline
\mathcal S^{(2)}_{8,2}=\begin{array}{llll}(3,T2,(6,0))\\\times (1,T1,(2,2))\end{array}
\end{array}$
\\
\hline

1&&&&&&&&&
\\
\hline
         
0&&&&&&&&&
\\
\hline
\hline         

s/r&0&1&2&3&4&5&6&7&8
\\
\hline
\end{tabular}
}
\label{r=10, S5}
\end{table}
\end{center}
\newpage

\begin{center}
\begin{table}[h]  
\caption{{\small $\pi_{0}(PI_{r,s})=2$, $r+s\leq 16$,\ \text{Part 2}}}
\scalebox{0.38}[0.38]{
\begin{tabular}{|c||c|c|c|c|c|c|c|c|c|c|c|}\hline

4&
$\begin{array}{lll}
\mathcal S^{(1)}_{9,4}=\begin{array}{llll}(4,T1,(8,0))\\\times (2,T2,(1,4))\end{array}\\\hline
\mathcal S^{(1)}_{9,4}=\begin{array}{llll}(4,T1,(4,4))\\\times (2,T2,(5,0))\end{array}\\\hline
\mathcal S^{(2)}_{9,4}=\begin{array}{llll}(4,T2,(7,0))\\\times (2,T1,(2,4))\end{array}\\\hline
\mathcal S^{(2)}_{9,4}=\begin{array}{llll}(4,T2,(3,4))\\\times (2,T1,(6,0))\end{array}\\\hline
\mathcal S^{(3)}_{9,4}=\begin{array}{llll}(3,T1,(7,0))\\\times (3,T2,(2,4))\end{array}\\\hline
\mathcal S^{(3)}_{9,4}=\begin{array}{llll}(3,T1,(3,4))\\\times (3,T2,(6,0))\end{array}\\
\end{array}$
&
$\begin{array}{lll}
\mathcal S^{(1)}_{10,4}=\begin{array}{llll}(4,T1,(8,0))\\\times (3,T2,(2,4))\end{array}\\\hline
\mathcal S^{(1)}_{10,4}=\begin{array}{llll}(4,T1,(4,4))\\\times (3,T2,(6,0))\end{array}\\\hline
\mathcal S^{(2)}_{10,4}=\begin{array}{llll}(2,T2,(7,0))\\\times (3,T1,(3,4))\end{array}\\\hline
\mathcal S^{(2)}_{10,4}=\begin{array}{llll}(2,T2,(3,4))\\\times (3,T1,(7,0))\end{array}\\
\end{array}$
&
$\begin{array}{lll}
\mathcal S^{(1)}_{11,4}=\begin{array}{llll}(4,T1,(8,0))\\\times (4,T2,(3,4))\end{array}\\\hline
\mathcal S^{(2)}_{11,4}=\begin{array}{llll}(4,T1,(4,4))\\\times (4,T2,(7,0))\end{array}\\
\end{array}$

&$\begin{array}{lll}
\mathcal S^{(1)}_{12,4}=\begin{array}{llll}(4,T1,(8,0))\\\times (4,T1,(4,4))\end{array}\\\hline\hline
\\
\mathcal S^{(2)}_{12,4}=\mathcal S^{(1)}_{11,4}\\\hline
\mathcal S^{(2)}_{12,4}=\mathcal S^{(2)}_{11,4}\\
\end{array}$&&&&
\\
\hline

3&
$\begin{array}{lll}
\mathcal S^{(2)}_{9,3}=\begin{array}{llll}(3,T1,(7,0))\\\times (2,T2,(2,3))\end{array}\\\hline
\mathcal S^{(2)}_{9,3}=\begin{array}{llll}(3,T1,(4,3))\\\times (2,T2,(5,0))\end{array}\\\hline
\mathcal S^{(3)}_{9,3}=\begin{array}{llll}(3,T2,(6,0))\\\times (2,T1,(3,3))\end{array}\\\hline
\mathcal S^{(3)}_{9,3}=\begin{array}{llll}(3,T2,(3,3))\\\times (2,T1,(6,0))\end{array}
\end{array}$
&
$\begin{array}{lll}
\mathcal S^{(1)}_{10,3}=\begin{array}{llll}(4,T1,(8,0))\\\times (2,T2,(2,3))\end{array}\\\hline
\mathcal S^{(2)}_{10,3}=\begin{array}{llll}(4,T2,(7,0))\\\times (2,T1,(3,3))\end{array}\\\hline
\mathcal S^{(3)}_{10,3}=\begin{array}{llll}(3,T1,(7,0))\\\times (3,T2,(3,3))\end{array}\\\hline
\mathcal S^{(3)}_{10,3}=\begin{array}{llll}(3,T1,(4,3))\\\times (3,T2,(6,0))\end{array}\\
\end{array}$
&$\begin{array}{lll}
\mathcal S^{(1)}_{11,3}=\begin{array}{llll}(4,T1,(8,0))\\\times (3,T2,(3,3))\end{array}\\\hline
\mathcal S^{(2)}_{11,3}=\begin{array}{llll}(2,T2,(7,0))\\\times (3,T1,(4,3))\end{array}\\
\end{array}$&&&&&
\\
\hline

2&$\begin{array}{lll}
\mathcal S^{(1)}_{9,2}=\begin{array}{llll}(4,T1,(8,0))\\\times (1,T2,(1,2))\end{array}\\\hline
\mathcal S^{(2)}_{9,2}=\begin{array}{llll}(4,T2,(7,0))\\\times (1,T1,(2,2))\end{array}
\end{array}$&
$\begin{array}{lll}
\mathcal S^{(1)}_{10,2}=\begin{array}{llll}(4,T1,(8,0))\\\times (1,T1,(2,2))\end{array}\\\hline
\mathcal S^{(2)}_{10,2}=\begin{array}{llll}(3,T1,(7,0))\\\times (2,T2,(3,2))\end{array}\\\hline
\mathcal S^{(3)}_{10,2}=\begin{array}{llll}(3,T2,(6,0))\\\times (2,T1,(4,2))\end{array}\\\hline\hline
\mathcal S^{(4)}_{10,2}=\mathcal S^{(1)}_{9,2}\\\hline
\mathcal S^{(5)}_{10,2}=\mathcal S^{(2)}_{9,2}
\end{array}$
&$\begin{array}{lll}
\mathcal S^{(1)}_{11,2}=\begin{array}{llll}(4,T1,(8,0))\\\times (2,T2,(3,2))\end{array}\\\hline
\mathcal S^{(2)}_{11,1}=\begin{array}{llll}(2,T2,(7,0))\\\times (2,T1,(4,2))\end{array}\\
\end{array}$&&&&&
\\
\hline

1&&&&&&&&
\\
\hline
0 &&
$\begin{array}{lll}
\mathcal S^{(1)}_{10}=\begin{array}{llll}(3,T1,(7,0))\\\times (1,T2,(3,0))\end{array}\\\hline
\mathcal S^{(2)}_{10}=\begin{array}{llll}(3,T2,(6,0))\\\times (1,T1,(4,0))\end{array}
\end{array}$
&
$\begin{array}{lll}
\mathcal S^{(1)}_{11}=\begin{array}{llll}(4,T1,(8,0))\\\times (1,T2,(3,0))\end{array}\\\hline
\mathcal S^{(2)}_{11}=\begin{array}{llll}(4,T2,(7,0))\\\times (1,T1,(4,0))\end{array}
\end{array}$
&$\begin{array}{lll}
\mathcal S^{(1)}_{12}=\begin{array}{llll}(4,T1,(8,0))\\\times (1,T1,(4,0))\end{array}\\\hline
\mathcal S^{(2)}_{12}=\begin{array}{llll}(3,T1,(7,0))\\\times (2,T2,(5,0))\end{array}\\\hline
\mathcal S^{(3)}_{12}=\begin{array}{llll}(3,T2,(6,0))\\\times (2,T1,(6,0))\end{array}\\\hline\hline
\begin{array}{llll}\mathcal S^{(4)}_{12}=\mathcal S^{(1)}_{11}\\\hline
\mathcal S^{(5)}_{12}=\mathcal S^{(2)}_{11}\end{array}
\end{array}$ & 
$\begin{array}{lll}
\mathcal S^{(1)}_{13}=\begin{array}{llll}(4,T1,(8,0))\\\times (2,T2,(5,0))\end{array}\\\hline
\mathcal S^{(2)}_{13}=\begin{array}{llll}(4,T2,(7,0))\\\times (2,T1,(6,0))\end{array}\\\hline
\mathcal S^{(3)}_{13}=\begin{array}{llll}(3,T1,(7,0))\\\times (3,T2,(6,0))\end{array}\\
\end{array}$&
$\begin{array}{lll}
\mathcal S^{(1)}_{14}=\begin{array}{llll}(4,T1,(8,0))\\\times (3,T2,(6,0))\end{array}\\\hline
\mathcal S^{(2)}_{14}=\begin{array}{llll}(4,T2,(7,0))\\\times (3,T1,(7,0))\end{array}\\
\end{array}$&
$\mathcal S_{15}=\begin{array}{llll}(4,T1,(8,0))\\\times (4,T2,(7,0))\end{array}$&
$\begin{array}{lll}
\mathcal S^{(1)}_{16}=\begin{array}{llll}(4,T1,(8,0))\\\times (4,T1,(8,0))\end{array}\\\hline\hline
\mathcal S^{(2)}_{16}=\mathcal S_{15}\\
\end{array}$
\\\hline\hline
s/r&9&10&11&12&13&14&15 &16\\\hline
\end{tabular}
}
\label{r=13,S4}
\end{table}
\end{center}


\section{Isomorphism of invariant integral structures}\label{sec:uniform-subgroups}


\begin{theorem}\label{th:12}
If 
\begin{equation}\label{low-rs} 
(r,s)\in\{(0,0),(1,0),(2,0),(0,1),(1,1),(2,1),(0,2)\},
\end{equation}
then for any orthonormal basis $\mathfrak B_{r,s}=\{z_j\}$ and $v\in V^{r,s}$, with $\langle v,v\rangle_{V^{r,s}}=\pm 1$ the invariant orthonormal structures spanned by bases as in Table~\ref{tab:trival-groups} are isomorphic.

\begin{table}[h]
\caption{Invariant integral structures for $(r,s)$ in Theorem~\ref{th:12}}
\scalebox{0.75}{
\begin{tabular}{|c||c|c|c|c|}
\hline
$2$  &$\{v,J_{z_1}v,J_{z_2}v,J_{z_1}J_{z_2}v,z_1,z_2\}$&&
\\
\hline
$1$    &$\{v,J_{z_1}v,z_1\}$&$\{v,J_{z_1}v,J_{z_2}v,J_{z_1}J_{z_2}v,z_1,z_2\}$&$\{v,J_{z_1}v,J_{z_2}v,J_{z_1}J_{z_2}v,z_1,z_2\}$ 
\\
\hline
$0$    &$v$&$\{v,J_{z_1}v,z_1\}$&$\{v,J_{z_1}v,J_{z_2}v,J_{z_1}J_{z_2}v,z_1,z_2\}$
\\
\hline\hline
$s/r$  &$0$&$1$&$2$  
\\\hline
\end{tabular}\label{tab:trival-groups}
}
\end{table}
\end{theorem}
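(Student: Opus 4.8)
The key observation driving the plan is that for every pair $(r,s)$ in~\eqref{low-rs} one has $\ell(r,s)=0$. Indeed, Table~\eqref{t:dim} gives $\dim(V^{r,s})=2^{r+s}$ in each of these cases, so $2^{\ell(r,s)}=2^{r+s}/\dim(V^{r,s})=1$ by Notation~\ref{def:ell(r,s)}. Hence the only group in $\mathbb S^{M}_{r,s}$ is the trivial one, the isotropy group of any non-null $v$ is $\mathcal S_v=\{\mathbf 1\}$, and $\widehat{\mathcal S_v}=\{\pm\mathbf 1\}$. By Corollary~\ref{orbit of non-null vector} the orbit $O_v=G(B_{r,s}).v$ is then, up to sign, a full invariant basis of $V^{r,s}$, and the bases displayed in Table~\ref{tab:trival-groups} are precisely such orbit bases $\{J_\sigma v\}_{\sigma\in\Sigma}$ with $\Sigma$ a transversal of $\{\pm\mathbf 1\}$ in $G(B_{r,s})$. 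Thus the assertion reduces to proving that the isomorphism class of the integral structure $\spn_{\mathbb Z}\{J_\sigma v\mid\sigma\in\Sigma\}\oplus\spn_{\mathbb Z}\{B_{r,s}\}$ is independent of the choice of $B_{r,s}$ and of the non-null generator $v$.

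First I would dispose, with $B_{r,s}$ fixed, of the sign of $\la v,v\ra_{V^{r,s}}$ and of the choice of $v$. When $s=0$ the module $V^{r,0}$ is definite, isometric to $\mathbb R^{\pm N,0}$, so $\la v,v\ra_{V^{r,0}}=+1$ is forced and nothing is needed. When $s\geq 1$ and two generators have opposite norm sign, I would replace one of them, say $v_2$, by $J_{z_j}v_2$ for a negative basis vector $z_j\in B_{r,s}$: by~\eqref{eq:isometry} this reverses the sign of the norm, while $J_{z_j}v_2\in O_{v_2}$ still generates the same invariant integral structure. This reduces everything to generators of equal norm sign, for which the independence of the choice of $v$ is exactly Proposition~\ref{prop:vector_v} applied with $\mathcal S(PI_{r,s})=\{\mathbf 1\}$ and $E=V^{r,s}$: the correspondence $J_\sigma v_1\mapsto J_\sigma v_2$ extends to an orthogonal $A$ with $A\oplus\Id$ an isomorphism of the two integral structures, the verification being~\eqref{eq:prop316}.

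It remains to change the basis $B_{r,s}$, which is the main obstacle. Given two orthonormal bases $B^{(1)},B^{(2)}$ of $\mathbb R^{r,s}$ I would pick $C\in\Orth(r,s)$ with $C(z_k^{(1)})=z_k^{(2)}$; since $C$ preserves $\la.\,,.\ra_{r,s}$ it extends to an algebra automorphism of $\Cl_{r,s}$ carrying $G(B^{(1)})$ onto $G(B^{(2)})$. Now $(V^{r,s},J\circ C,\la.\,,.\ra_{V^{r,s}})$ is again a minimal admissible module of $\Cl_{r,s}$, and since none of the cases in~\eqref{low-rs} carries two inequivalent minimal admissible modules (no $_{\times 2}$ entry in Table~\eqref{t:dim}), the minimal admissible module is unique up to isometric isomorphism by~\cite[Theorem 3.1]{Ciatti00} and~\cite{FurMar19}. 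This furnishes an orthogonal $A\in\GL(V^{r,s})$ with $A\,J_z=J_{C(z)}\,A$ for all $z\in\mathbb R^{r,s}$, which by orthogonality ($A^\tau=A^{-1}$, $C^\tau=C^{-1}$) is exactly the automorphism condition~\eqref{automorphism condition:eq}; hence $A\oplus C$ is an automorphism of $\mathfrak n_{r,s}$. Because $A\,J_\sigma v=J_{C(\sigma)}(Av)$ with $C(\sigma)\in G(B^{(2)})$, the map $A\oplus C$ sends the orbit basis attached to $(B^{(1)},v)$ onto the orbit basis attached to $(B^{(2)},Av)$ up to sign and sends $B^{(1)}$ onto $B^{(2)}$, thereby identifying the two integral lattices. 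Composing this basis change with the $v$-change of the previous paragraph (possible since $A$ preserves $\la v,v\ra_{V^{r,s}}$, so the norm signs can be matched) produces a single automorphism of $\mathfrak n_{r,s}$ carrying the integral structure of any admissible datum $(B^{(1)},v_1)$ onto that of $(B^{(2)},v_2)$, whence the associated integral uniform subgroups are isomorphic, in accordance with the cited theorem of Raghunathan~\cite{Raghunathan72}.
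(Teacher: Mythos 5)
Your proposal is correct and its backbone coincides with the paper's: you first observe that all seven signatures force $\ell(r,s)=0$, hence the only isotropy group is trivial and the displayed bases are orbit bases, and you then reduce independence of the generator $v$ to Proposition~\ref{prop:vector_v} — which is exactly the two-line argument the paper gives ("there are no involutions" plus "literally repeats the proof of Proposition~\ref{prop:vector_v}"). Where you diverge is in making explicit two points the paper leaves implicit. First, you match the sign of $\langle v,v\rangle_{V^{r,s}}$ for $s\geq 1$ by replacing $v_2$ with $J_{z_j}v_2$ for a negative $z_j$ before invoking Proposition~\ref{prop:vector_v}; this is a genuine improvement, since the correspondence $A$ in that proposition is orthogonal only when the two generators have norms of the same sign, a hypothesis the proposition does not state but tacitly uses. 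Second, for the change of orthonormal basis $B_{r,s}$ you produce the intertwiner $A$ with $AJ_z=J_{C(z)}A$ from the uniqueness up to isometry of the minimal admissible module, whereas the paper (via Remark~\ref{low dim cases} and Theorem~\ref{th:From group}) defines $A$ directly on the orbit basis by $J_\sigma v\mapsto J_{C(\sigma)}w$ and checks orthogonality by hand. The two routes are equivalent here; the paper's is more elementary and self-contained, while yours buys the conclusion from general representation-theoretic uniqueness (and correctly notes that none of these signatures carries two inequivalent minimal admissible modules, which is what makes that appeal legitimate). No gaps.
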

\begin{proof} There are only trivial groups $\mathcal S\subset\mathbb S^M_{r,s}$ for $(r,s)$ as in~\eqref{th:12} since there are no involutions. 
The proof of uniqueness literally repeats the proof of Theorem~\ref{prop:vector_v}.
\end{proof}
%
%
%
%
We fix an orthonormal basis $\mathfrak B_{r,s}=\{z_1,\ldots,z_{r+s}\}$ and a group $\mathcal S=\mathcal S(PI_{r,s})$. Recall the construction of an invariant basis $\mathcal B_v(V^{r,s})$ on the minimal admissible module $V^{r,s}$ from Theorem~\ref{basis expression}, which used the centraliser of the isotropy group $\mathcal S=\mathcal S(PI_{r,s})=\mathcal S_v$ of a unit vector $v\in V^{r,s}$. The invariant integral structure on the Lie algebra $\mathfrak n_{r,s}(V^{r,s})$ given by $\mathcal S$ will be denoted by
$$
\mathcal L(\mathcal S)=\spn_{\mathbb Z}\{\mathcal B_v(V^{r,s})\}\oplus\spn_{\mathbb Z}\{\mathfrak B_{r,s}\}.
$$

\begin{theorem}\label{th:From group}
If two groups $\mathcal S_1$ and $\mathcal S_2$ are equivalent;
that is there exists a map $C\in O(r,s)$ such that $C(\widehat{\mathcal S_1})=\widehat{\mathcal S_2}$, 
then the invariant integral structures $\mathcal L(\mathcal S_1)$ and $\mathcal L(\mathcal S_2)$ are isomorphic under a map $A\oplus C$, 
where
$A\colon V^{r,s}\to V^{r,s}$ is an orthogonal map with respect to $\langle.\,,.\rangle_{V^{r,s}}$; that is $A^{\tau}A=\Id_{V^{r,s}}$.
\end{theorem}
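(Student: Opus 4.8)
The plan is to produce a single orthogonal map $A\colon V^{r,s}\to V^{r,s}$ that intertwines the representation with itself along $C$, namely $A\circ J_{z}=J_{Cz}\circ A$ for every $z\in\mathbb R^{r,s}$, and then to read the statement off the automorphism condition~\eqref{automorphism condition:eq}. Indeed, once $A$ is orthogonal we have $A^{\tau}=A^{-1}$, so $A\circ J_z=J_{Cz}\circ A$ is equivalent to $A^{\tau}\circ J_z\circ A=J_{C^{\tau}z}$, which is exactly~\eqref{automorphism condition:eq}; hence $A\oplus C$ is an automorphism of $\mathfrak n_{r,s}(V^{r,s})$. The remaining work is then combinatorial: to check that $A\oplus C$ carries the integral lattice $\mathcal L(\mathcal S_1)$ bijectively onto $\mathcal L(\mathcal S_2)$.

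To build $A$ I would not appeal to abstract uniqueness of minimal admissible modules, but construct it directly on an invariant basis. By Proposition~\ref{mathcal S and  Sv} I choose a unit vector $v_1$ with isotropy $\mathcal S_{v_1}=\mathcal S_1$, fix a maximal generating set $PI_1$ and a transversal $\Sigma_1\subset G(B_{r,s})$, so that $\mathcal B_{v_1}(V^{r,s})=\{J_{\sigma}v_1\}_{\sigma\in\Sigma_1}$ is the invariant basis of Theorem~\ref{basis expression}. Since $C$ maps $B_{r,s}$ to $\pm B_{r,s}$, its Clifford extension is a group automorphism of $G(B_{r,s})$, and $C(\mathcal S_1)=\mathcal S(C(PI_1))$ again lies in $\mathbb S^{M}_{r,s}$; I pick a unit vector $v_2$ with $\mathcal S_{v_2}=C(\mathcal S_1)$. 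As $C(\Sigma_1)$ is then a transversal for the $C(\mathcal S_1)$-cosets, the family $\{J_{C(\sigma)}v_2\}_{\sigma\in\Sigma_1}$ is again an invariant orthonormal basis. I define $A$ by $A(J_{\sigma}v_1):=J_{C(\sigma)}v_2$ for $\sigma\in\Sigma_1$ and extend $\mathbb R$-linearly; mapping one orthonormal basis to another forces $A^{\tau}A=\Id_{V^{r,s}}$.

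The intertwining identity is verified on the basis. For $z\in B_{r,s}$ and $\sigma\in\Sigma_1$, invariance gives $J_zJ_{\sigma}v_1=\varepsilon\, J_{\sigma'}v_1$ with $\sigma'\in\Sigma_1$ and $\varepsilon=\pm1$; equivalently $z\sigma=\varepsilon\,\sigma' s$ in $G(B_{r,s})$ for some $s\in\mathcal S_1$. Applying $A$ and using $C(s)\in\mathcal S_{v_2}$, so that $J_{C(s)}v_2=v_2$, one obtains $A(J_zJ_{\sigma}v_1)=\varepsilon J_{C(\sigma')}v_2=J_{C(z\sigma)}v_2=J_{Cz}J_{C(\sigma)}v_2=J_{Cz}A(J_{\sigma}v_1)$, so $A\circ J_z=J_{Cz}\circ A$ throughout. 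Combined with the first paragraph, $A\oplus C$ is a Lie algebra automorphism. Finally $C$ preserves $\spn_{\mathbb Z}\{B_{r,s}\}$ by the signed-permutation property, while by construction $A$ sends $\mathcal B_{v_1}(V^{r,s})$ onto the invariant basis $\mathcal B_{v_2}(V^{r,s})$; hence $A\oplus C$ maps $\mathcal L(\mathcal S_1)$ onto $\mathcal L(\mathcal S_2)$.

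The one genuinely delicate point, which I would isolate as a lemma, is the identification of the image lattice $\spn_{\mathbb Z}\{\mathcal B_{v_2}(V^{r,s})\}$ with the module part of $\mathcal L(\mathcal S_2)$. The vector $v_2$ has isotropy $C(\mathcal S_1)$, and the equivalence only guarantees $\widehat{C(\mathcal S_1)}=C(\widehat{\mathcal S_1})=\widehat{\mathcal S_2}$, so $C(\mathcal S_1)$ and $\mathcal S_2$ agree only up to signs of their generators. The claim that this sign ambiguity leaves the $\mathbb Z$-span unchanged is precisely where I expect the effort to go: it rests on the non-canonicity of the direction of each basis vector (Corollary~\ref{orbit of non-null vector}), on the identity $\widehat{\mathcal S(PI)}=\widehat{\mathcal S(\varepsilon\cdot PI)}$ from Proposition~\ref{S to PI to S}(3), and on the independence of the integral structure from the generating vector established in Proposition~\ref{prop:vector_v}. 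Assembling these three facts shows that $\spn_{\mathbb Z}\{\mathcal B_{v_2}(V^{r,s})\}$ equals the module part of $\mathcal L(\mathcal S_2)$, which completes the proof.
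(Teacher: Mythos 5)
Your proposal is correct and follows essentially the same route as the paper: both construct $A$ on the invariant basis by $J_{\sigma}v_1\mapsto J_{C(\sigma)}v_2$, deduce orthogonality from mapping an orthonormal basis to an orthonormal basis, and verify the intertwining $A\circ J_z=J_{Cz}\circ A$ from invariance before concluding that brackets are preserved. Your explicit group-theoretic check $z\sigma=\varepsilon\,\sigma' s$ and your isolation of the sign-ambiguity issue (that $C(\mathcal S_1)$ and $\mathcal S_2$ coincide only after passing to $\widehat{\,\cdot\,}$, resolved via Proposition~\ref{S to PI to S}(3) and Proposition~\ref{prop:vector_v}) are more careful spellings-out of steps the paper handles implicitly by ``replacing $PI_2$ by $C(PI_1)$,'' not a different argument.
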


\begin{proof} The proof is a slight generalisation of Theorem~\ref{prop:vector_v}.
Let $\mathcal S_1=\mathcal S(PI_1)$ and $\mathcal S_2=\mathcal S(PI_2)$ be equivalent groups. It implies that there is $C\in\Orth (r,s)$ such that $C(\widehat{\mathcal S_1})=\widehat{\mathcal S_2}$ where we denoted by the same letter $C$ the extension of the orthogonal map to the group $\Cl^*_{r,s}\subset \Cl_{r,s}$ of invertible elements of the Clifford algebra $\Cl_{r,s}$. Let 
\begin{equation}\label{eq:Isom basis 1}
\mathcal B_v(V^{r,s})=\Big\{v, J_{\sigma_i}(v),
J_{\tau_j}(v),
J_{\tau_j}J_{\sigma_i}(v)\mid\  \sigma_i,\tau_j, \sigma_i\tau_j\in\Sigma(\mathcal S_1)\Big\}
\end{equation}
be the invariant basis, constructed in Theorem~\ref{basis expression} by making use the eigenspaces of involutions from $PI_1$. The set $PI_1$ is equivalent to $PI_2$ under $C$. We use the method of Theorem~\ref{basis expression} and obtain a basis
\begin{eqnarray}\label{eq:Isom basis 2}
\mathcal B_w(V^{r,s})&=&\Big\{w, J_{C(\sigma_i)}(w),
 J_{C(\tau_j)}(w),
J_{C(\tau_j)}J_{C(\sigma_i)}(w)\mid\nonumber
\\
&&C(\sigma_i),C(\tau_j),C(\sigma_i)C(\tau_j)\in\Sigma(\mathcal S_2)\Big\},
\end{eqnarray}
where $\mathcal S_2\cong\mathcal S(PI_2)\cong\mathcal S(C(PI_1))$ and the set $PI_2$ was replaced by $C(PI_1)$. Note that since $C(\mathfrak B_{r,s})=\mathfrak B_{r,s}$ we also have $G(\mathfrak B_{r,s})=G\big(C(\mathfrak B_{r,s})\big)$.

We construct a correspondence $A\colon \mathcal B_v(V^{r,s})\to \mathcal B_w(V^{r,s})$ by 
\begin{align*}
&v\longmapsto w,\quad 
J_{\sigma_i}(v)\longmapsto J_{C(\sigma_i)}(w),\quad
J_{\tau_j}(v)\longmapsto J_{C(\tau_j)}(w),
\\
&J_{\tau_j}(v)J_{\sigma_i}(v)\longmapsto
J_{C(\tau_j)}(w)J_{C(\sigma_i)}(w),
\end{align*}
and $C\colon z_k\longmapsto C(z_k)$. The correspondence $A\oplus C$ extended to a linear map over $\mathbb R$ or $\mathbb Z$ is an orthogonal map on $V^{r,s}$ since it maps orthonormal basis ~\eqref{eq:Isom basis 1} to orthonormal basis~\eqref{eq:Isom basis 2}. To show that the linear map $A\oplus C$ is an isomorphism of invariant integral structures, we argue as in Theorem~\ref{prop:vector_v}. By the invariance of the bases $\mathcal B_v(V^{r,s})$ and $\mathcal B_w(V^{r,s})$ we have 
$$
J_{C(z_k)}Au_{\alpha}=
\pm J_{C(\varkappa)}w=
\pm AJ_{\varkappa}v=AJ_{z_k}u_{\alpha}
$$
for any $u_{\alpha}\in \mathcal B_v(V^{r,s})$, $z_k\in \mathfrak B_{r,s}$, and for some $\varkappa\in\Sigma=\{\sigma_i,\tau_j,\tau_j\sigma_i\}$.
It implies 
\begin{eqnarray*}
\langle [Au_{\alpha},Au_{\beta}],C(z_k)\rangle_{r,s} &=&
\langle J_{C(z_k)}Au_{\alpha},Au_{\beta}\rangle_{V^{r,s}}
=
\langle AJ_{z_k}u_{\alpha},Au_{\beta}\rangle_{V^{r,s}}\nonumber
\\
&=&
\langle A^{\tau}AJ_{z_k}u_{\alpha},u_{\beta}\rangle_{V^{r,s}}=
\langle J_{z_k}u_{\alpha},u_{\beta}\rangle_{V^{r,s}}
\\
&=&
\langle [u_{\alpha},u_{\beta}],z_k\rangle_{r,s}\nonumber
\end{eqnarray*}
for any $u_{\alpha},u_{\beta}\in \mathcal B_v(V^{r,s})$ and $z_k\in \mathfrak B_{r,s}$.
\end{proof}

\begin{theorem}\label{th:To groups}
Let $\mathcal S_1,\mathcal S_2\in\mathbb{S}^{M}_{r,s}$ and $\mathcal{L}(\mathcal S_1)$, $\mathcal{L}(\mathcal S_2)$ be the corresponding invariant integral structures. If there is an isomorphism
\begin{equation}\label{eq:lattice isom}
A\oplus C\colon\mathcal{L}(\mathcal S_1) \to \mathcal{L}(\mathcal S_2)
\end{equation}
with
$A\colon V^{r,s}\to V^{r,s}$ such that $A^\tau A=\Id_{V^{r,s}}$, then 
$\mathcal S_1$ and $\mathcal S_2$ are equivalent in the sense of Definition~\ref{def:group equiv}.
\end{theorem}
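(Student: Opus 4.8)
The plan is to transport the isotropy description of the invariant bases through the isomorphism $A\oplus C$ and then read off the equivalence of the groups directly from Definition~\ref{def:group equiv}. First I would record the algebraic consequences of the hypotheses. Since $A\oplus C$ is an isomorphism of the integral structures, it extends to a block-diagonal Lie algebra automorphism of $\mathfrak n_{r,s}(V^{r,s})$, and with $A$ orthogonal it satisfies the automorphism condition~\eqref{automorphism condition:eq}, i.e. $A^{\tau}J_zA=J_{C^{\tau}z}$ for all $z\in\mathbb R^{r,s}$. Using $A^{\tau}A=\Id_{V^{r,s}}$ this rearranges to $AJ_zA^{-1}=J_{Cz}$, and because $J$ is an algebra homomorphism it propagates multiplicatively to
\[
AJ_{\sigma}A^{-1}=J_{C(\sigma)},\qquad \sigma\in G(B_{r,s}),\quad C(\sigma)=C(z_{i_1})\cdots C(z_{i_k}).
\]
I would then argue that $C$ is a (signed) re-ordering: the map $A\oplus C$ carries the centre lattice $\spn_{\mathbb Z}\{B_{r,s}\}$ of $\mathcal L(\mathcal S_1)$ onto that of $\mathcal L(\mathcal S_2)$, so $C$ preserves $\spn_{\mathbb Z}\{B_{r,s}\}$, and being in $\Orth(r,s)$ it permutes $\{\pm z_1,\dots,\pm z_{r+s}\}$ and respects the signature. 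Hence $C\colon G(B_{r,s})\to G(B_{r,s})$ is a well-defined bijection with $C(-\sigma)=-C(\sigma)$, so $C(\widehat{\mathcal S})=\widehat{C(\mathcal S)}$ for every $\mathcal S$.

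Second, I would compute the isotropy group of the image of the generating vector $v_1$ (the unit vector with $\mathcal S_1=\mathcal S_{v_1}$). The intertwining relation gives, for every $\tau\in G(B_{r,s})$,
\[
J_{\tau}(Av_1)=Av_1\iff J_{C^{-1}(\tau)}v_1=v_1\iff C^{-1}(\tau)\in\mathcal S_1,
\]
so that $\mathcal S_{Av_1}=C(\mathcal S_1)$ exactly, and therefore $\widehat{\mathcal S_{Av_1}}=C(\widehat{\mathcal S_1})$. Moreover $A(O_{v_1})=\{J_{C(\sigma)}(Av_1)\mid\sigma\in G(B_{r,s})\}=O_{Av_1}$, and since $A$ maps $\spn_{\mathbb Z}\{\mathcal B_{v_1}(V^{r,s})\}$ onto $\spn_{\mathbb Z}\{\mathcal B_{v_2}(V^{r,s})\}$, the orbit $O_{Av_1}$ is an orthonormal invariant basis of $V^{r,s}$ that spans the same integral lattice as $O_{v_2}$.

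The decisive step is to locate $Av_1$ inside the orbit of $v_2$, namely to prove $Av_1=\pm J_{\rho}v_2$ for some $\rho\in G(B_{r,s})$. Granting this, $\mathcal S_{Av_1}=\rho\mathcal S_2\rho^{-1}$; since any two elements of $G(B_{r,s})$ commute or anticommute, conjugation introduces only signs, whence $\widehat{\mathcal S_{Av_1}}=\widehat{\mathcal S_2}$. Combining with $\widehat{\mathcal S_{Av_1}}=C(\widehat{\mathcal S_1})$ yields $C(\widehat{\mathcal S_1})=\widehat{\mathcal S_2}$, i.e. $\mathcal S_1\sim\mathcal S_2$ in the sense of Definition~\ref{def:group equiv}. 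This last claim is exactly the main obstacle. When $s=0$ the form $\langle.\,,.\rangle_{V^{r,s}}$ is definite, the vector lattice is isometric to the standard $\mathbb Z^{N}$, and the integral orthogonal matrix of $A$ (with respect to the bases $\mathcal B_{v_1}$ and $\mathcal B_{v_2}$) is forced to be a signed permutation; thus $Av_1$ is literally a signed element of $\mathcal B_{v_2}=O_{v_2}/\!\pm$, and the argument closes at once. For $s\geq 1$ the form on $V^{r,s}$ is indefinite and integral isometries need not be signed permutations, so one must rule out ``mixed'' unit vectors $Av_1$ with several nonzero coordinates. Here I would exploit that $O_{Av_1}$ and $O_{v_2}$ are both orthonormal $G(B_{r,s})$-orbits spanning the same unimodular integral lattice, and combine this with the rigidity of the permutation-up-to-sign action of $G(B_{r,s})$ on an orthonormal orbit to force $O_{Av_1}=\pm O_{v_2}$, hence $Av_1\in\pm O_{v_2}$; establishing this rigidity is the technically heaviest part of the proof.
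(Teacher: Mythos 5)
Your proposal follows essentially the same route as the paper: transport the $J$-action through $A$ via $AJ_zA^{-1}=J_{Cz}$, observe that $C$ restricts to a signed permutation of $B_{r,s}$, compute the isotropy group of $Av_1$, and conclude $C(\widehat{\mathcal S_1})=\widehat{\mathcal S_2}$. The one step you single out as the ``decisive'' obstacle --- that $Av_1$ must land on a signed element of the orbit $O_{v_2}$ --- is precisely the step the paper handles by bare assertion (``we find a basis vector $u_j\in\mathfrak B_u(V^{r,s})$ such that $Av=u_j$''), so your analysis is, if anything, more candid about where the work lies: for $s=0$ the definiteness argument you give (an integral isometry of $\mathbb Z^N$ with the standard form is a signed permutation) is exactly what makes the assertion true, while for $s\geq 1$ the paper offers no more than you do, and the rigidity you postpone is genuinely left open in both texts. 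One point where you are more careful than the paper: if $Av_1=\pm J_{\rho}v_2$ with $\rho\neq\mathbf 1$, the isotropy group is the conjugate $\rho\mathcal S_2\rho^{-1}$, and your observation that conjugation in $G(B_{r,s})$ only introduces signs (so the hatted groups agree) is the correct way to close this; the paper instead ``renumbers'' so that $Av=u$, which silently changes the generating vector of the basis and is really the same conjugation argument in disguise.
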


\begin{proof}
Let $ \mathcal S_1=\mathcal S_v$ be the isotropy subgroup of a unit vector $v\in V^{r,s}$ and $\mathcal S_2=\mathcal S_u$ the isotropy subgroup of a unit vector $u\in V^{r,s}$. Let
$$
\mathcal L(\mathcal S_v)=\spn_{\mathbb Z}\{\mathcal B_v(V^{r,s})\}\oplus\spn_{\mathbb Z}\{\mathfrak B_{r,s}\}=L_1\oplus\spn_{\mathbb Z}\{\mathfrak B_{r,s}\}
$$
$$
\mathcal L(\mathcal S_u)=\spn_{\mathbb Z}\{\mathcal B_u(V^{r,s})\}\oplus\spn_{\mathbb Z}\{\mathfrak B_{r,s}\}=L_2\oplus\spn_{\mathbb Z}\{\mathfrak B_{r,s}\}
$$
be the invariant integral structures generated by the groups $\mathcal S_v$ and $\mathcal S_u$. 
Since $A\oplus C$ is an isomorphism, we obtain $A(L_1)=L_2$. By noting 
that $A^{-1}(L_2)=A^{\tau}(L_2)=L_1$, we deduce that $A^{\tau}A(L_1)=L_1$.

We denote by the same letter  
$A\oplus C\in\Aut(\mathfrak{n}_{r,s})$ the automorphism of $\mathfrak n_{r,s}(V^{r,s})$ which restriction to $\mathcal L(\mathcal S_v)$ gives map~\eqref{eq:lattice isom}. 
The properties $A^{\tau}A=\Id_{V^{r,s}}$ and $A^{\tau}J_{C(z)}A=J_{z}$ imply $AJ_zx=J_{C(z)}Ax$ for $x\in L_1$ and $C\in \Orth(r,s)$, the latter one being an orthogonal transformation over $\mathbb Z$ as well. For  $v\in\mathfrak B_v(V^{r,s})$ we find a basis vector $u_j\in\mathfrak B_u(V^{r,s})$ such that 
$Av=u_j$. If there holds $Av=-u_j$, then the proof is similar. By renumbering the basis vectors $\{u_j\}$ we can assume that $A v=u$. We have for the stationary group of $Av$
\begin{eqnarray}\label{eq:0}
\mathcal S_{Av}&=&\{\tilde\sigma\in G\big(C(\mathfrak B_{r,s})\big)\mid\ J_{\tilde\sigma}Av=Av\}\nonumber
\\
&=&
\{\tilde\sigma\in G\big(C(\mathfrak B_{r,s})\big)\mid\ J_{\tilde\sigma}u=u\}=\mathcal S_{u}.
\end{eqnarray}
Since $\tilde\sigma=C(z_{i_1})\ldots C(z_{i_k})$, and $AJ_zx=J_{C(z)}Ax$, $x\in L_1$ we have
\begin{equation*}\label{eq1}
Av=J_{\tilde\sigma}Av=J_{C(z_{i_1})}\ldots J_{C(z_{i_k})}Av=AJ_{z_{i_1}}\ldots J_{z_{i_k}}v=AJ_{\sigma}v.
\end{equation*}
This implies
$v=J_{\sigma}v$ for any $\sigma\in G(\mathfrak B_{r,s})$.
Thus we conclude that if $\tilde\sigma\in \mathcal S_{Av}$, for $\tilde\sigma=C(z_{i_1})\ldots C(z_{i_k})\in G(C(\mathfrak B_{r,s}))$ then $\sigma=z_{i_1}\ldots z_{i_k}\in \mathcal S_{v}$. Thus the groups $\mathcal S_{Av}$ and $\mathcal S_{v}$ are equivalent. The equalities~\eqref{eq:0} show that $\mathcal S_2=\mathcal S_{u}=\mathcal S_{Av}$ and $\mathcal S_1=\mathcal S_{v}$ are equivalent.
\end{proof}

Table~\ref{tab:F-one} shows the classical groups $\mathbb A$ such that the map $A\oplus \Id$ with $A\in \mathbb A$ is the automorphism of $H$-type Lie algebras $\mathfrak n_{r,s}(V^{r,s})$, see also~\cite[Table 3]{FurMar21} for non-minimal admissible modules. The groups $\Sp(n), \Orth(n,\mathbb C),\U(n),\Orth^*(n)$ are subgroups of orthogonal transformations.

\begin{table}[h]
\centering
\caption{Groups $\mathbb A$}
\scalebox{0.57}{
\begin{tabular}{| c || c| c| c| c| c| c| c|	c| c|}
\hline
8 & $\GL(1,\mathbb{R})$ & & & & & & & & \\
\hline

7 &$\Orth(1,1,\mathbb R)$& $\U(1,1)$ & $\Sp(1,1)$ & $\Sp(1)\times \Sp(1)$& & & & & \\
\hline

6 & $ \Orth(2,\mathbb C)$ & $\Orth^*(2)$ & $\GL(1, \mathbb H)$  & $\Sp(1)$ & & & & & \\
\hline

5 &$\Orth^*(4)$  &$\Orth^*(2)\times \Orth^*(2)$  &$\Orth^*(2)$&  $\U(1)$& & & & & \\
\hline

4 & $\GL(1, \mathbb{H})$ &  $\Orth^*(2)$ & $\Orth(1, \mathbb{C})$ & $\Orth(1, \mathbb{R})$ & $\GL(1, \mathbb{R})$ & & & & \\
\hline

3 &$\Sp(1,1)$ & $\U(1,1)$ &$\Orth(1,1,\mathbb R)$ & $\Orth(1,\mathbb R)\times \Orth(1,\mathbb R)$  & $\Orth(1,1,\mathbb R)$& $\U(1,1)$& $\Sp(1,1)$ & $\Sp(1)\times \Sp(1)$& \\
\hline

2 &$\Sp(2,\mathbb C)$ & $\Sp(2,\mathbb R)$ & $\GL(2,\mathbb R)$ & $\Orth(2,\mathbb R)$&$\Orth(2,\mathbb C)$ &$\Orth^*(2)$ & $\GL(1, \mathbb H)$ &  $\Sp(1)$&  \\
\hline

1 &$\Sp(2,\mathbb R)$  &$\Sp(2,\mathbb{R})\times \Sp(2,\mathbb{R})$ & $\Sp(4,\mathbb R)$&$\U(2)$&$\Orth^*(4)$  &$\Orth^*(2)\times\Orth^*(2)$ & $\Orth^*(2)$ &$\U(1)$ & \\
\hline

0 &  &$\Sp(2,\mathbb R)$ &$\Sp(2,\mathbb C)$ & $\Sp(1)$ &  $\GL(1,\mathbb{H})$ &  $\Orth^*(2)$ & $\Orth(1,\mathbb{C})$& $\Orth(1,\mathbb{R})$ & $\GL(1,\mathbb{R})$ \\ 
\hline
\hline

 & 0 & 1 &2 & 3 & 4 & 5 & 6 & 7 & 8 \\
\hline
\end{tabular}
}
\label{tab:F-one}
\end{table}

\begin{theorem}\label{th:orthogonal cases}
Let $(r,s)$ be such that the group $\mathbb A$ in Table~\ref{tab:F-one} is a subgroup of orthogonal transformations. The groups $\mathcal{S}_1,\mathcal{S}_2\in\mathbb S_{r,s}^M$
are equivalent in sense of Definition~\eqref{equiv relation}, if and only if the corresponding invariant integral structures $\mathcal L(\mathcal{S}_1)$ and $\mathcal L(\mathcal{S}_2)$ are isomorphic.
\end{theorem}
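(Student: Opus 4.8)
The statement is an equivalence, and one implication is already available. The plan is to obtain the ``only if'' direction immediately from Theorem~\ref{th:From group}: if $\mathcal S_1\sim\mathcal S_2$ then there is $C\in\Orth(r,s)$ with $C(\widehat{\mathcal S_1})=\widehat{\mathcal S_2}$, and Theorem~\ref{th:From group} produces an orthogonal $A$ such that $A\oplus C$ is an isomorphism of $\mathcal L(\mathcal S_1)$ onto $\mathcal L(\mathcal S_2)$. Hence the entire difficulty lies in the converse: from an \emph{arbitrary} isomorphism of the integral structures one must manufacture one of the special form $A\oplus C$ with $A^{\tau}A=\Id_{V^{r,s}}$, so that Theorem~\ref{th:To groups} can then be applied to conclude $\mathcal S_1\sim\mathcal S_2$.

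So first I would note that an isomorphism of the integral structures $\mathcal L(\mathcal S_1)$ and $\mathcal L(\mathcal S_2)$ is a $\mathbb Z$-linear Lie-ring isomorphism, which extends $\mathbb R$-linearly to an automorphism $\Phi$ of $\mathfrak n_{r,s}(V^{r,s})$ carrying the lattice $\mathcal L(\mathcal S_1)$ onto $\mathcal L(\mathcal S_2)$. Using that $\Aut(\mathfrak n_{r,s})$ is generated by the dilations $[1]$, the block maps $A\oplus C$ with $C\in\Orth(r,s)$ satisfying~\eqref{automorphism condition:eq} $[2]$, and the shears $(v,z)\mapsto(v,z+Bv)$ $[3]$, I would write $\Phi$ as a product of these and then normalize. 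The dilation factor must be trivial up to sign, because both structures have all nonzero structure constants equal to $\pm1$ with the same centre normalization, while $\delta_\lambda$ rescales the centre lattice by $\lambda^2$; thus only $\lambda^2=1$ is compatible with mapping $\spn_{\mathbb Z}\{B_{r,s}\}$ onto itself. The shear factor fixes the $V$-projection and only alters the centre component, so it does not change the block structure on $V\oplus\mathbb R^{r,s}$ that governs the isotropy groups, and it can therefore be discarded. After these reductions $\Phi=A\oplus C$ with $C\in\Orth(r,s)$; since $C$ additionally preserves $\spn_{\mathbb Z}\{B_{r,s}\}$ together with the splitting into positive and negative vectors, it is a signed re-ordering of $B_{r,s}$.

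The key step, where the hypothesis on Table~\ref{tab:F-one} enters, is to upgrade $A$ to an orthogonal map. For the signed re-ordering $C$ I would produce a reference automorphism $A_C\oplus C$ realized by a Pin element $\sigma_C\in G(B_{r,s})$ implementing $C$ on the centre, with $A_C=J_{\sigma_C}$; by~\eqref{eq:isometry} the operator $J_{\sigma_C}$ preserves $\langle.\,,.\rangle_{V^{r,s}}$, so $A_C$ is orthogonal. Then $(A\oplus C)\circ(A_C\oplus C)^{-1}=(AA_C^{-1})\oplus\Id$ is an automorphism fixing the centre pointwise, hence $AA_C^{-1}\in\mathbb A$. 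By assumption $\mathbb A$ is a group of orthogonal transformations, so $AA_C^{-1}\in\Orth(V^{r,s})$; combined with $A_C\in\Orth(V^{r,s})$ this gives $A^{\tau}A=\Id_{V^{r,s}}$. Now $A\oplus C$ is an orthogonal isomorphism $\mathcal L(\mathcal S_1)\to\mathcal L(\mathcal S_2)$, and Theorem~\ref{th:To groups} yields $\mathcal S_1\sim\mathcal S_2$ in the sense of Definition~\ref{def:group equiv}.

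The main obstacle I expect is the normalization in the middle paragraph: rigorously peeling off the dilation and, above all, the shear from a general product of generators while controlling the integral lattices, and verifying that the resulting $C$ is genuinely a signed re-ordering rather than a more general orthogonal map. The orthogonality upgrade via $\mathbb A$ in the last paragraph is conceptually the heart of the theorem but is short once the hypothesis is in hand; the bookkeeping reducing an arbitrary automorphism to the clean form $A\oplus C$ is the laborious part.
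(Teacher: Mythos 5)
Your proposal follows the same architecture as the paper's proof: the ``if'' direction is exactly Theorem~\ref{th:From group}, and the ``only if'' direction reduces to producing an isomorphism of the special form $A\oplus C$ with $A^{\tau}A=\Id_{V^{r,s}}$ so that Theorem~\ref{th:To groups} applies. The only divergence is in the middle: the paper disposes of the orthogonality of $A$ in one line by citing Section~3.2 of the authors' earlier work (the statement that $\tilde A^{\tau}\tilde A=\Id_{V^{r,s}}$ for automorphisms $\tilde A\oplus\Id$ forces the same for general $A\oplus C$), whereas you reconstruct that reduction by composing with a reference automorphism $A_C\oplus C$ and observing $AA_C^{-1}\in\mathbb A$. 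That is the right idea and matches what the citation contains, but one detail of your reconstruction is inaccurate: a signed re-ordering $C$ that genuinely permutes basis vectors is \emph{not} induced by an element of $G(B_{r,s})$ --- the twisted adjoint action of products of basis vectors produces only sign changes $z_i\mapsto -z_i$, so realizing a transposition requires Pin elements of the form $\tfrac{1}{\sqrt{2}}(z_i-z_j)$, which lie outside $G(B_{r,s})$; moreover, for indefinite signature one must separately verify both that the resulting $J_{\sigma_C}$ is an isometry rather than an anti-isometry (the sign in~\eqref{eq:isometry} depends on $\la\sigma_C,\sigma_C\ra_{r,s}$) and that preservation of $\spn_{\mathbb Z}\{B_{r,s}\}$ actually forces $C$ to be a signed re-ordering (integral points of $\Orth(r,s)$ need not be signed permutations when $r,s\geq 1$). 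These gaps sit inside the step the paper itself delegates to a reference, so they do not affect the overall correctness of the strategy, which coincides with the paper's.
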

\begin{proof}
If $(r,s)$ as in the statement of Theorem~\ref{th:orthogonal cases}, then for an automorphism $\tilde A\oplus\Id$ of $\mathfrak n_{r,s}(V^{r,s})$ we have $\tilde A^{\tau}\tilde A=\Id_{V^{r,s}}$. It implies that the general automorphisms  $A\oplus C$ of $\mathfrak n_{r,s}(V^{r,s})$ also satisfies $A^{\tau}A=\Id_{V^{r,s}}$, see~\cite[Section 3.2]{FurMar21}. 

Thus if the invariant integral structures $\mathcal L(\mathcal{S}_1)$ and $\mathcal L(\mathcal{S}_2)$ are isomorphic, then they will be isomorphic under a map $A\oplus C$ with $A^{\tau}A=\Id_{V^{r,s}}$. It implies that the groups $\mathcal{S}_1$ and $\mathcal{S}_2$ are equivalent by Theorem~\ref{th:To groups}. 

Conversely, if we assume now that the groups $\mathcal{S}_1$ and $\mathcal{S}_2$ are equivalent, then by Theorem~\ref{th:From group} the corresponding invariant integral structures will be isomorphic.
\end{proof}

%
%

Note that in the proof of Theorem~\ref{th:To groups} the crucial assumption was $A^{\tau} A=\Id_{V^{r,s}}$. The following theorem shows that it is enough to find a subset $E\subset V^{r,s}$, which is invariant under the action $A^{\tau} A$. It allows to prove the general theorem.

\begin{theorem}\label{one general criterion-1}
The groups $\mathcal{S}_1,\mathcal{S}_2\in\mathbb S_{r,s}^M$
are equivalent in sense of Definition~\ref{equiv relation}, if and only if the corresponding invariant integral structures $\mathcal L(\mathcal{S}_1)$ and $\mathcal L(\mathcal{S}_2)$ are isomorphic.
\end{theorem}

\begin{proof} 
If $\mathcal S_1$ is equivalent to $\mathcal S_2$, then  the corresponding invariant integral structures 
$\mathcal{L}(\mathcal S_1)$ and $\mathcal{L}(\mathcal S_2)$ 
are isomorphic by Theorem~\ref{th:From group}.

Suppose that invariant integral structures 
$\mathcal{L}(\mathcal S_1)$ and $\mathcal{L}(\mathcal S_2)$ 
are isomorphic. By contrary we assume that the groups $\mathcal S_1=\mathcal{S}(PI_1)\in \mathbb{S}^M_{r,s}$ and $\mathcal S_2=\mathcal{S}(PI_2)\in \mathbb{S}^M_{r,s}$ are not equivalent. Then there are $q_1\in PI_1$ 
and $q_2\in PI_2$
such that
$q_1\cdot q_2 = - q_2\cdot q_1$. For if $q_1\cdot p = p\cdot q_1$ for all $p\in PI_2$, then $q_1\in PI_2$, which would contradict to the maximality of $\mathcal{S}(PI_2)$.

Without loss of generality we can assume that the groups $\mathcal{S}(PI_1)$ and $\mathcal{S}(PI_2)$ are written in the standard form as in Example~\ref{ex:standard group}. Let $k$ be a maximal number of type $T_1$ involutions $p_j$ satisfying $p_j\in PI_1\cap PI_2$, $j=1,\ldots k$. Note that $k<\ell(r,s)$ since 
$\mathcal{S}(PI_1)$ and $\mathcal{S}(PI_1)$ are not equivalent.

Let $\mathfrak n_{r,s}(V^{r,s})$ be a pseudo $H$-type Lie algebra and 
$$
E=\{x\in V^{r,s}\mid\ J_{p_j}x=x,\ p_j\in PI_1\cap PI_2,\ j=1,\ldots, k\}.
$$
Since $q_1p_{j}=p_{j}q_1$ and $q_2p_{j}=p_{j}q_2$ the subspace $E\subset V^{r,s}$ is invariant under the action of both $J_{q_1}$ and $J_{q_2}$.

For an isomorphism
$A\oplus C\colon \mathcal{L}(\mathcal S_1)\to \mathcal{L}(\mathcal S_2)$
we set 
$$
F=A(E)=\{Ax\in V^{r,s}\mid\ AJ_{p_j}x=J_{C(p_j)}Ax=Ax,\ p_j\in PI_1\cap PI_2,\ j=1,\ldots, k\}.
$$
The map $C$, extended to the Clifford algebra $\Cl_{r,s}$, satisfies $C(p_j)C(q_1)=C(q_1)C(p_j)$ and $C(p_j)C(q_2)=C(q_2)C(p_j)$, $j=1,\ldots k$.  
These imply that $A(E)$ is invariant under the action of $J_{C(q_1)}$ and $J_{C(q_2)}$ by the same arguments as for $E$.
Thus the direct sum
\begin{equation}\label{eq:orthog sum}
F=F_{+}\oplus F_{-},\qquad F_{+}=\{y\in F\mid\ J_{C(q_2)}y=y\},\quad F_{-}=\{y\in F\mid\ J_{C(q_2)}y=-y\}
\end{equation}
is the orthogonal sum of non-trivial vector spaces.

Let $x\in E$ and put $Ax=y_+(x)+y_-(x)$, where $y_+(x)\in F_{+}$ and $y_-(x)\in F_{-}$. We have for the type $T_1$ involution $q_1\in PI_1$ that
\[
J_{C(q_1)}Ax=J_{C(q_1)}(y_+(x)+y_-(x))=J_{C(q_1)}y_+(x)+J_{C(q_1)}y_-(x).
\]
Since $C(q_1)C(q_2)=-C(q_2)C(q_1)$ we obtain 
$$
J_{C(q_1)}\colon F_+\to F_-,\quad\text{and}\quad J_{C(q_1)}y_+(x)\in F_-,\quad J_{C(q_1)}y_-(x)\in F_+,
$$
and therefore
$
y_+(x)=J_{C(q_1)}y_-(x)$ and $y_-(x)=J_{C(q_1)}y_+(x)
$
by the uniqueness of the decomposition into a direct sum of vector spaces. 
We conclude
\begin{equation}\label{eq:JCp}
J_{C(q_1)}Ax=y_+(x) +J_{C(q_1)}y_+(x).
\end{equation}

Since $p_j$ are $T_1$-type involutions, we obtain 
$$
AJ_{p_j}=J_{C(p_{j})}A,\ \ J^{\tau}_{p_j}=J_{p_j},\ \ J^{\tau}_{C(p_j)}=J_{C(p_j)},\ \ A^{\tau}J_{C(p_j)}=J_{p_j}A^{\tau},\ \ j=1,\ldots, k.
$$ 
It implies $A^{\tau}A(E)=E$. Let $\{v_i\}$ be an orthonormal basis of the space $E$, which is a
part of the invariant basis for $V^{r,s}$ defined by the $\mathcal S_1=\mathcal{S}(PI_1)$. 
The matrix components $a_{ij}$ of the operator $A^{\tau}A\colon E\to E$ with respect to the basis $\{v_i\}$ have the form 
\begin{align*}
a_{ij}&=\langle A^{\tau} Av_i,v_j\rangle_{V^{r,s}}=
\langle Av_i, A v_j\rangle_{V^{r,s}}
=
\langle C(q_1), C(q_1)\rangle_{r,s}\langle Av_i, A v_j\rangle_{V^{r,s}}
\\
&=
\langle J_{C(q_1)}Av_i, J_{C(q_1)}A v_j\rangle_{V^{r,s}}
\\
&=
\langle y_+(v_i)+J_{C(q_1)}(y_+(v_i)), y_+(v_j)+J_{C(q_1)}y_+(v_j)\rangle_{V^{r,s}}
\\
&=
2\langle y_+(v_i), y_+(v_j)\rangle_{V^{r,s}},
\end{align*}
where we used~\eqref{eq:orthog sum} and~\eqref{eq:JCp}. Hence the non-vanishing components of the matrix $A^{\tau}A$ restricted to $E$ are always even numbers, so that $\det\,A^{\tau}A=2^{\dim E}\cdot k$, $k\in\mathbb{Z}$. 

Let us look on the structure of the map $A^{\tau}A$ acting on the entire minimal admissible module $V^{r,s}$. The space $V^{r,s}$ is an orthogonal sum of subspaces $W_{i}$
of the form $J_{\varkappa_{i}}(E)=W_{i}$ with $\varkappa_i\in\Sigma$ from Theorem~\ref{basis expression}. Let $x\in V^{r,s}$. We write
$$
x=x_E+x_{1}+\ldots+x_{m},\quad x_i\in W_i,
$$
and $A^{\tau}A\vert_{W_i}$ for the restriction of the map $A^{\tau}A$ on the set $W_i$. For any $x_i\in W_i$ there is $y_E\in E$ such that $J_{\varkappa_i}(y_E)=x_i$. Choose $i\in \{1,\ldots,m\}$ and assume that $\varkappa_i$ is a product of an even number of the basis vectors. Then we obtain
\begin{eqnarray*}
A^{\tau}A\vert_{W_i}(x_i)=A^{\tau}A\vert_{W_i}J_{\varkappa_i}(y_E)=J_{\varkappa_i}A^{\tau}A\vert_{E}(y_E)
=J_{\varkappa_i}A^{\tau}A\vert_{E}J^{-1}_{\varkappa_i}(x_i).
\end{eqnarray*}
Thus in this case $A^{\tau}A\vert_{W_i}=J_{\varkappa_i}A^{\tau}A\vert_{E}J^{-1}_{\varkappa_i}$.
If $\varkappa_i$ is a product of an odd number of the basis vectors, then the isomorphism condition 
\eqref{automorphism condition:eq} for $A\oplus C$ implies $A^{\tau}AJ_{\varkappa_i}A^{\tau}A=J_{\varkappa_i}$. This leads to
\begin{eqnarray*}
A^{\tau}A\vert_{W_i}(x_i)=A^{\tau}A\vert_{W_i}J_{\varkappa_i}(y_E)=J_{\varkappa_i}(A^{\tau}A\vert_{E})^{-1}(y_E)
=J_{\varkappa_i}(A^{\tau}A\vert_{E})^{-1}J^{-1}_{\varkappa_i}(x_i)
\end{eqnarray*}
and therefore $A^{\tau}A\vert_{W_i}=J_{\varkappa_i}(A^{\tau}A\vert_{E})^{-1}J^{-1}_{\varkappa_i}$.

The elements of the matrix $A$ of the isomorphism of the lattices 
$\mathcal{L}(\mathcal S_{1})$ and $\mathcal{L}(\mathcal S_{2})$ are all integers.
If all $W_i$ are images $J_{\varkappa_i}(E)$ with $\varkappa_i$ being a product of an even number of the basis vectors, then it is clear that $A^{\tau}A\notin \SL(\dim V^{r,s},\mathbb{Z})$. If there is $W_i=J_{\varkappa_i}(E)$ with $\varkappa_i$ being a product of an odd number of the basis vectors, then $\det A^{\tau}A\vert_{W_i}=\frac{1}{2^{\dim E}\cdot k}$ which contradicts to the fact that the terms of the matrix $A^{\tau}A$ are all integers.  
\end{proof}

\section{Final conclusions}
We summarize the results obtained in the article. We have provided a classification of invariant orthonormal integral structures on the pseudo $H$‑type Lie algebras $\mathfrak n_{r,s}$ whose complements to the centres are isometric to a minimal admissible module of the Clifford algebras $\Cl(\mathbb R^{r,s})$. The classification is complete for $1 \leq r+s \leq 16$. This classification can be extended to any values of the parameters $r,s \in \mathbb N$ by using the Atiyah–Bott type periodicity, although for $r+s > 16$ the classification may no longer be complete.

The core of the classification is the collection of all non-equivalent sets of positive involutions $PI_{r,0}$ for $r \in {1,\ldots,16}$. The invariants of the classification are the maximal number of involutions $\ell(r,s)$ and the type ($T1$ or $T2$) of the set of involutions $PI_{r,0}$ containing all $r$ basis vectors. The maximal sets of involutions $PI_{r,0}$ containing fewer than $r$ basis vectors belong to the sets $PI_{t,0}$ with $t < r$. For $PI_{r,0}$ with $r \geq 10$, a new parameter enters the classification, namely the number of connected components in the set of involutions; see~\eqref{sum1} and \eqref{sum2}. It is not known to the author at the present moment what kind of new phenomenon may influence the structure of $PI_{r,0}$ for $r > 16$.
Once the description of $PI_{r,0}$ for $r > 16$ is obtained, the classification can be extended to $PI_{r,s}$ with $r+s > 16$. The paper contains several tables in which all non-equivalent sets of involutions are listed. In forthcoming papers, we aim to obtain a complete classification of orthonormal invariant integral structures for the entire range of parameters $r,s \in \mathbb N$.

\bibliographystyle{alpha}
\bibliography{Bibliography.bib}
\end{document}